\documentclass[11pt]{amsart}
\usepackage{amssymb,amsmath,amsthm,amscd,mathrsfs,graphicx}
\usepackage[matrix,arrow,tips,curve]{xy}
\usepackage[english]{babel}
\usepackage[leqno]{amsmath}
\usepackage{amssymb,amsthm}
\usepackage{amscd}
\usepackage{enumerate}
\usepackage{layout}

\newtheorem{theorem}{Theorem}[section]
\newtheorem{convention}[theorem]{}
\newtheorem{lemma}[theorem]{Lemma}
\newtheorem{proposition}[theorem]{Proposition}
\newtheorem{corollary}[theorem]{Corollary}
\newtheorem{question}[theorem]{Question}
\newtheorem{fact}[theorem]{Fact}
\newtheorem{theoremalpha}{Theorem}

\theoremstyle{definition}
\newtheorem{definition}[theorem]{Definition}

\numberwithin{equation}{section}


\newcommand{\calA}{{ \mathcal A}}
\newcommand{\calB}{{ \mathcal B}}

\newcommand{\calE}{{ \mathcal E}}
\newcommand{\calF}{{ \mathcal F}}
\newcommand{\calG}{{ \mathcal G}}

\newcommand{\calI}{{ \mathcal I}}
\newcommand{\calJ}{{ \mathcal J}}
\newcommand{\calK}{{ \mathcal K}}
\newcommand{\calL}{{ \mathcal L}}
\newcommand{\calM}{{ \mathcal M}}
\newcommand{\calN}{{ \mathcal N}}
\newcommand{\calO}{{ \mathcal O}}


\newcommand{\bbN}{{\mathbb N}}

\newcommand{\bbP}{{\mathbb P}}

\newcommand{\Hom}{\operatorname{Hom}}
\newcommand{\ShEnd}{\underline{\operatorname{End}}}
\newcommand{\Aut}{\operatorname{Aut}}

\newcommand{\Ext}{\operatorname{Ext}}
\newcommand{\ShExt}{\underline{\operatorname{Ext}}}
\newcommand{\Tor}{\operatorname{Tor}}
\newcommand{\Spec}{\operatorname{Spec}}

\newcommand{\red}{\operatorname{red}}
\newcommand{\s}{\operatorname{s}}

\makeatletter
\@namedef{subjclassname@2010}{ \textup{2010} Mathematics Subject 
Classification} \makeatother

\begin{document}

\title{Moduli of generalized line bundles on a ribbon}

\author{Dawei Chen}
\address{Department of Mathematics, Statistics and Computer Science, University of Illinois at Chicago, 851 S Morgan Street, 
Chicago, IL 60607}
\email{dwchen@math.uic.edu}

\author{Jesse Leo Kass}
\address{Department of Mathematics, University of Michigan, 530 Church Street, Ann Arbor, MI 48109}
\email{kass@math.harvard.edu}

\thanks{The second author was supported by NSF grant DMS-0502170.}
\date{\today}
\subjclass[2010]{Primary 14D20, Secondary  14C20. }
\keywords{Ribbon, Semi-stable sheaf, Generalized line bundle}

\begin{abstract}
A ribbon is a first-order thickening of a non-singular curve.  Motivated by a question of Eisenbud and Green, we show that
a compactification of the moduli space of line bundles on a ribbon is given by the moduli space of semi-stable sheaves. We then describe the geometry of this space, determining the irreducible components, the connected components, and the smooth locus.
\end{abstract}

\date{}

\maketitle

\tableofcontents

\section{Introduction}
This paper describes the moduli space of slope semi-stable sheaves on a ribbon.  A ribbon is a first-order thickening 
of a non-singular curve, and in the context of this paper, their study began with work of Bayer--Eisenbud (\cite{bayer95}) and 
Eisenbud--Green (\cite{green95}), where ribbons were used to study Green's conjecture on linear series using degeneration techniques. More recently, the first named author used these techniques to study Brill--Noether loci (\cite{dawei}).  Given a linear series on a non-singular curve, the approach is to specialize the curve to a ribbon and then specialize the linear series to a series on the ribbon.  Here one encounters a difficulty: it is only possible to specialize the linear series if the specialization is allowed to be a generalized linear series.  Recall that a linear series is a line bundle
$\calI$ together with a subspace of the space of global sections.  Eisenbud and Green  defined a \emph{generalized} linear series on a ribbon by allowing $\calI$ to be a more general coherent sheaf, termed a generalized line bundle.  In \cite{green95}, the authors raised the question: \textbf{does there exist a moduli space of generalized line bundles?}
 
Eisenbud and Green observed that the set of all generalized line bundles of fixed degree cannot be parameterized by an algebraic $k$-scheme because the class of such sheaves is unbounded.  Let $X$ be a ribbon.  Given a finite birational morphism $f \colon X' \to X$ of ribbons, the direct image $\calI := f_{*}(\calI')$ of a line bundle $\calI'$
is a generalized line bundle, and varying over all $f$, we obtain all generalized line bundles.  The genus 
of $X'$ is an important invariant of $\calI$, so following Dr{\'e}zet (\cite[\S5.4]{drezet08}), we define the index $b(\calI)$ of $\calI$
by $b(\calI) := g(X) - g(X')$.  The index of a generalized line bundle can be arbitrarily negative, and unboundedness follows.  
The exact question posed by Eisenbud and Green (\cite[pg.~758, midpage]{green95}) is: \textbf{if $X$
is a rational ribbon (i.e. $X_{\text{red}} \cong \bbP^1$), then is it possible to compactify the moduli space
of degree $0$ line bundles on $X$ by a moduli space of degree $0$ generalized line bundles with non-negative
index?}

Requiring that the index is non-negative is one way to recover boundedness. Another way is to impose the condition of slope semi-stability. In the general theory of moduli of 
sheaves, it has long been recognized that many natural classes of sheaves are  unbounded, but one can recover boundedness by considering sheaves that satisfy slope semi-stability.  In great generality, Simpson (\cite{simpson}) has constructed moduli spaces $\operatorname{M}(\calO_{X}, P)$ of semi-stable sheaves with Hilbert polynomial $P$ on a polarized scheme $(X, \calL)$.  
$\operatorname{M}(\calO_{X}, P)$  is a coarse moduli space in the sense that non-isomorphic sheaves may correspond to the same point of $\operatorname{M}(\calO_{X}, P)$.  Sheaves 
satisfying the stronger condition of stability sweep out an open locus $\operatorname{M}_{\text{s}}(\calO_{X}, P) \subset \operatorname{M}(\calO_{X}, P)$, and this 
locus is the fine moduli space of stable sheaves.  The goal of this paper is to describe the
Simpson moduli spaces $\operatorname{M}(\calO_{X}, P)$ and $\operatorname{M}_{\text{s}}(\calO_{X}, P)$  
when $X$ is a ribbon, with an eye towards addressing the question posed by Eisenbud and Green.

Given a polarized ribbon $(X, \calL)$, we study the Simpson moduli space parameterizing semi-stable sheaves
with Hilbert polynomial $P_{d}(t) := \deg(\calL) t + d + 1 -g$, the Hilbert polynomial of a degree $d$ line bundle.  
Here $g$ is the genus of $X$.  There are two types of sheaves that are parameterized by $\operatorname{M}(\calO_{X}, P_d)$:
generalized line bundles on $X$ and rank $2$ vector bundles on $X_{\red}$.   More precisely, we have
\begin{theoremalpha}\label{Thm: Main I}
	Let $(X, \calL)$ be a polarized ribbon.  Set $g$ equal to the genus of $X$ and $\bar{g}$ equal to the genus of $X_{\red}$.
	If $\calF$ is a coherent sheaf on $X$, then $\calF$ corresponds to a point of
	$\operatorname{M}(\calO_{X}, P_d)$ (resp. $\operatorname{M}_{\s}(\calO_{X}, P_d)$)
	if and only if $\calF$ is isomorphic to one of the following sheaves:
	\begin{itemize}
		\item a degree $d$ generalized line bundle $\calI$ of index less than or equal to  (resp. strictly less than) $1 + g - 2 \bar{g}$;
		\item the direct image $i_* \calE$ of a rank $2$, slope semi-stable (resp. slope stable) vector bundle
			on $X_{\red}$ of degree $d + 2 \bar{g}-1-g$.  Here $i: X_{\red} \hookrightarrow X$ is the inclusion map. 
	\end{itemize}
\end{theoremalpha}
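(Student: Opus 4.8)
The plan is to reduce the statement to a classification of slope semi-stable sheaves: a coherent sheaf corresponds to a point of $\operatorname{M}(\calO_X,P_d)$ precisely when it is slope semi-stable with Hilbert polynomial $P_d$, and to a point of $\operatorname{M}_{\s}(\calO_X,P_d)$ precisely when it is moreover stable, so it suffices to identify those sheaves. A slope semi-stable sheaf is pure of dimension one, so the first ingredient is the dichotomy for pure sheaves on a ribbon. At the generic point $\eta$ the ring $\calO_{X,\eta}$ is Artinian local of length $2$ with square-zero maximal ideal, hence a length-$2$ module over it is either free of rank one or annihilated by the maximal ideal. Accordingly a pure sheaf $\calF$ of the multiplicity imposed by $P_d$ (i.e.\ generically of length $2$ over $\calO_{X,\eta}$) is either generically a line bundle on $X$ --- that is, a generalized line bundle --- or is annihilated by the nilradical $\calJ\subset\calO_X$, in which case $\calF=i_*\calE$ with $\calE$ a torsion-free, hence locally free, sheaf on $X_{\red}$; comparison of multiplicities forces $\calE$ to have rank $2$. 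It therefore remains to decide, within each of these two families, which members are semi-stable (resp. stable) with Hilbert polynomial $P_d$.

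For the sheaves $i_*\calE$: any subsheaf of $i_*\calE$ is again annihilated by $\calJ$ and so has the form $i_*\calE'$ with $\calE'\subseteq\calE$, while $i_*$ preserves Euler characteristics and scales multiplicities by a fixed constant; hence the reduced Hilbert polynomial of $i_*\calE'$ on $X$ agrees with that of $\calE'$ on $X_{\red}$. Thus $i_*\calE$ is semi-stable (resp. stable) on $X$ if and only if $\calE$ is slope semi-stable (resp. slope stable) on $X_{\red}$, and the identity $\chi(i_*\calE)=\chi(\calE)=\deg\calE+2(1-\bar{g})$, equated with $d+1-g$, forces $\deg\calE=d+2\bar{g}-1-g$. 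This handles the second bullet.

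The crux is the family of generalized line bundles. Fix a generalized line bundle $\calI$ of degree $d$ and index $b:=b(\calI)$. By the local picture at $\eta$, a proper subsheaf $\calK\subset\calI$ is either generically of rank one on $X$, so $\calI/\calK$ is torsion and the reduced Hilbert polynomial of $\calK$ is strictly less than that of $\calI$ --- such a $\calK$ does not destabilize --- or else $\calK$ is annihilated by $\calJ$ and hence lies in the maximal $\calO_{X_{\red}}$-submodule $\calN:=\{\,s\in\calI:\calJ\cdot s=0\,\}$. Purity of $\calI$ shows that $\calN$ and $\calI/\calN$ are line bundles on $X_{\red}$, so $\calN$ has the greatest degree among the $\calO_{X_{\red}}$-submodules of $\calI$; consequently $\calI$ is semi-stable exactly when $\calN$ does not destabilize it, and stable exactly when in addition $\calN$ is not slope-equal to $\calI$. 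It then remains to compute $\deg\calN$. Writing $\calI=f_*\calI'$ for a finite birational morphism $f\colon X'\to X$ and analyzing the filtration $\calJ\calI\subseteq\calN\subseteq\calI$ --- which identifies $\calJ\calI$ with $\calJ\otimes(\calI/\calN)$ and the index $b$ with the length of the torsion subsheaf of $\calI\otimes\calO_{X_{\red}}$ --- together with the relation $g=2\bar{g}-1-\deg\calJ$ (so that $1+g-2\bar{g}=-\deg\calJ$ is the degree of the normal bundle of $X_{\red}$ in $X$), one arrives at an explicit value of $\deg\calN$ for which the slope inequality $p(\calN)\le p(\calI)$ is equivalent to $b\le 1+g-2\bar{g}$, with equality exactly when $\calN$ is slope-equal to $\calI$. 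This yields the first bullet, in both the semi-stable and the stable versions. Finally, a short Euler-characteristic computation confirms that every sheaf named in either bullet does have Hilbert polynomial $P_d$, completing the equivalence.

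I expect the main obstacle to be the generalized line bundle case, and specifically the structural facts underpinning the computation of $\deg\calN$: that $\calN$ and $\calI/\calN$ are line bundles on $X_{\red}$, that $\calJ\calI\cong\calJ\otimes(\calI/\calN)$, and that the index equals the torsion length of $\calI\otimes\calO_{X_{\red}}$. By contrast the generic-point dichotomy for pure sheaves, the transfer of stability along $i_*$, and the Hilbert-polynomial bookkeeping (which one must verify so that all three descriptions really share $P_d$) are routine.
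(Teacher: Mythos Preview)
Your proposal is correct and follows essentially the paper's route: the pure-sheaf dichotomy, the transfer of (semi\nobreakdash-)stability along $i_*$, and for a generalized line bundle the reduction to a single extremal test object whose slope comparison is equivalent to $b(\calI)\le 1+g-2\bar g$. The only cosmetic difference is that you test with the maximal $\calO_{X_{\red}}$-\emph{subsheaf} (the $\calJ$-annihilator in $\calI$), whereas the paper uses the dual \emph{quotient} $\bar{\calI}$, the torsion-free quotient of $\calI\otimes\calO_{X_{\red}}$; these two sit in the short exact sequence $0\to\{s:\calJ s=0\}\to\calI\to\bar{\calI}\to 0$ (your subsheaf is the paper's $\operatorname{F}_1(\calI)$), so the two slope inequalities are equivalent.
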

This is a reformation of Theorem~\ref{Thm: StabilityThm}.

Consider the special case where $X$ is a rational ribbon (i.e. $\bar{g}=0$), which was the case considered in \cite{bayer95}. 
Theorem~\ref{Thm: Main I} then asserts that the stable locus $\operatorname{M}_{\s}(\calO_{X}, P_0)$ of the Simpson moduli space is the fine moduli 
space parameterizing generalized line bundles of index at most $g$.  When $g$ is even, every semi-stable sheaf
is stable, and $\operatorname{M}_{\s}(\calO_{X}, P_0)$ is projective.  Otherwise, $\operatorname{M}_{\s}(\calO_{X}, P_0)$
is not projective, and its complement in the projective scheme $\operatorname{M}(\calO_{X}, P_0)$ consists of a single point.  In other words, \textbf{the Simpson moduli space satisfies the properties Eisenbud and Green ask for precisely when $g$ is even}.  This result is restated as Corollary~\ref{Cor: SimpSpaceRatlRibb} in the body of the text.

We prove several results about the geometry of $\operatorname{M}(\calO_{X}, P_d)$.  The following theorem enumerates
the irreducible components of $\operatorname{M}(\calO_{X}, P_d)$.
\begin{theoremalpha}\label{Thm: Main II}
	Let $X$ be a ribbon.  Denote by $g$ the genus of $X$, $\bar{g}$  the genus of $X_{\red}$, and set
	\begin{displaymath}
		n := 
		\begin{cases}
			\lfloor (g+2)/2 \rfloor - \bar{g}  & \text{ if $d$ is even;}\\
			\lfloor (g+1)/2 \rfloor - \bar{g} & \text{ if $d$ is odd.}
		\end{cases}
	\end{displaymath}

	Assume there exists a stable generalized line bundle of degree $d$ (i.e. $g > 2 \bar{g}-1$ holds).  Then  
	$\operatorname{M}( \calO_{X}, P_d)$ has exactly $n$  irreducible components of dimension $g$ whose general element corresponds to a generalized line bundle.
	
	There is at most one additional component.  When it exists, this component is of dimension $4 \bar{g} - 3$ and the general element  corresponds to a stable rank $2$ vector bundle on $X_{\text{red}}$.  This additional component does not exist when $\bar{g}=0,1$ but does exist when the two conditions $\bar{g} \geq 2$ and $4 \bar{g}-3\geq g$ are satisfied.
\end{theoremalpha}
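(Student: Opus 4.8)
The plan is to stratify $\operatorname{M}(\calO_X, P_d)$ by index and to decide which strata contribute irreducible components. Write $C := X_{\red}$ and let $L\subset\calO_X$ be the ideal of $C$. By Theorem~\ref{Thm: StabilityThm}, the underlying set of $\operatorname{M}(\calO_X, P_d)$ is the union of the locally closed subsets $\calN_b$ ($0\le b\le 1+g-2\bar g$), where $\calN_b$ is the image in $\operatorname{M}(\calO_X, P_d)$ of the family of degree $d$ generalized line bundles of index $b$, together with the locus $\calV$ of the sheaves $i_*\calE$ with $\calE$ a semistable rank $2$ bundle on $C$ of degree $d+2\bar g-1-g$. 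I would show that each $\calN_b$ is irreducible, that $\dim\overline{\calN_b}=g$ precisely for $b\equiv d\pmod 2$ with $0\le b\le g-2\bar g$ and that these closures are the $n$ asserted components, that the only remaining stratum $\calN_{1+g-2\bar g}$ is absorbed into $\calV$, and finally that $\calV$ is an extra component in exactly the stated range.

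To study $\calN_b$, I would represent a generalized line bundle of index $b$ as $f_*\calI'$ for the minimal finite birational morphism $f\colon X'\to X$ with $\calI'$ a line bundle on the ribbon $X'$; the ideal of $X'$ is then $L\otimes\calO_C(D)$ for an effective divisor $D$ of degree $b$ on $C$, so $g(X')=g-b$. This exhibits $\calN_b$ as the image of a morphism from the total space of a family of Picard schemes over $\operatorname{Sym}^b C$ parametrizing the pairs $(D,\calI')$; as base and fibres are irreducible, $\calN_b$ is irreducible. Since line bundles on a ribbon have even degree, the fibre $\Pic^{d-b}(X')$ is nonempty exactly when $b\equiv d\pmod 2$, and for general $D$ one has $\dim\Pic^{d-b}(X')=h^1(C,L\otimes\calO_C(D))+\bar g=\max\{\bar g,\,g-b\}$. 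Bounding $\dim\calN_b$ above by the dimension of the tangent space $\operatorname{Ext}^1_X(\calI,\calI)$ at a generalized line bundle $\calI$ and below by the preceding family, one gets $\dim\overline{\calN_b}=g$ for $0\le b\le g-2\bar g$. The technical heart here is to prove the parametrization by $(D,\calI')$ is generically finite onto $\calN_b$ (recover $D$ from the non-locally-free divisor and then $\calI'$) and to compute $\operatorname{Ext}^1_X(\calI,\calI)$, also for special $X$.

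The only admissible $b$ exceeding $g-2\bar g$ is $b=1+g-2\bar g$, and it is admissible only when $g\not\equiv d\pmod 2$; a generalized line bundle of that index is semistable but not stable, so in the coarse space it coincides with its $S$-equivalence class, whose Jordan--H\"older factors form a pair $i_*N_0,\,i_*N_1$ of line bundles on $C$ of the common slope. Hence it also represents $i_*(N_0\oplus N_1)\in\calV$, so $\calN_{1+g-2\bar g}$ contributes no component beyond $\calV$. On the other hand, for $b\equiv d\pmod 2$ with $0\le b\le g-2\bar g$ all the closures $\overline{\calN_b}$ are irreducible of dimension $g$; since the strata are pairwise disjoint and irreducible, no two such closures can be nested (a containment of equidimensional irreducible closed sets forces equality of closures, hence a common point of two distinct strata), nor can any such closure lie in $\overline{\calV}$ (its dense subset $\calN_b$ is disjoint from $\calV$). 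Therefore they are pairwise distinct irreducible components, and a parity count of the admissible $b$ produces exactly $n$ of them in each of the two cases.

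Finally $\calV$ is the image of the projective moduli space of semistable rank $2$ bundles of degree $d+2\bar g-1-g$ on $C$ under the closed embedding $\calE\mapsto i_*\calE$, so $\calV$ is closed, and it is empty or a point for $\bar g=0$, irreducible of dimension $1$ for $\bar g=1$, and irreducible of dimension $4\bar g-3$ for $\bar g\ge 2$. Since every strictly semistable sheaf is $S$-equivalent to some $i_*(N_0\oplus N_1)$, the set $\operatorname{M}(\calO_X, P_d)\setminus\calV$ consists exactly of the stable generalized line bundles, so the generic point of any irreducible component other than $\calV$ lies in some $\calN_b$ with $b\le g-2\bar g$, whence that component is one of the $n$ found above; thus $\calV$ is the only possible extra component, and it is a component if and only if $\calV\not\subseteq\bigcup_b\overline{\calN_b}$. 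Now the generic point of each $\overline{\calN_b}$ is generically free of rank $1$ over $\calO_X$ while no point of $\calV$ is, so a containment $\calV\subseteq\overline{\calN_b}$ is impossible once $\dim\calV=4\bar g-3\ge g=\dim\overline{\calN_b}$; and $4\bar g-3\ge g$ forces $\bar g\ge 2$ in view of $g\ge 2\bar g$, which yields the extra component in exactly the stated range. When instead $4\bar g-3<g$ — in particular whenever $\bar g\le 1$ — one shows $\calV\subseteq\overline{\calN_b}$ for a suitable $b$ by degenerating generalized line bundles onto $i_*\calE$, letting the nilpotent part of the module structure tend to zero along a family $f_t\colon X'\to X$, combined with a Langton-type adjustment keeping the special fibre semistable. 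The main obstacle is precisely this last dichotomy — deciding when the pushforward of a general semistable rank $2$ bundle is a flat limit of generalized line bundles — which the dimension-and-generic-type comparison settles when $4\bar g-3\ge g$ and the explicit degeneration handles in the complementary range.
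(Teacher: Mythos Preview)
Your overall strategy matches the paper's: stratify by index, show the strata with $b\equiv d\pmod 2$ and $0\le b\le g-2\bar g$ give $g$-dimensional irreducible components, and handle the vector bundle locus $\calV$ by a dimension comparison. Your stratification by \emph{total} index $b$ is coarser than the paper's stratification by local index sequence $\underline{b}$, but your parametrization of $\calN_b$ via the Eisenbud--Green datum $(D,\calI')\in\operatorname{Sym}^bC\times\Pic^{d-b}(X')$ is actually cleaner than the paper's (which parametrizes by $U\times\Pic^{d-b}(X)$ and then quotients out a positive-dimensional fibre). The paper compensates by proving an explicit deformation lemma (a generalized line bundle with local index $b_0+2$ at a point deforms to one with local index $b_0$ there), which it needs to show every generalized line bundle lies in the closure of some $Z_{(1,\dots,1)}$; you avoid this by working directly with the full $\calN_b$. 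Both routes arrive at the same $n$ components.

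The genuine gap is your treatment of the case $\bar g\le 1$, where you must show $\calV$ is \emph{not} an additional component. Your proposed ``Langton-type adjustment'' letting the nilpotent structure tend to zero is not a proof, and the paper's argument is quite different and case-specific. For $\bar g=0$ (and the strictly semistable part of $\bar g=1$) the paper uses $\operatorname{Gr}$-equivalence: the relevant rank $2$ bundles are S-equivalent to strictly semistable generalized line bundles, which in turn are specializations of stable ones via the explicit deformation lemma. For $\bar g=1$ with $e$ odd (so every rank $2$ bundle in $\calV$ is \emph{stable} and S-equivalence gives nothing), the paper first reduces to $g=2$ by blowing up, then argues indirectly: $\Pic^d(X)$ is not proper, so its image in the projective scheme $\operatorname{M}(\calO_X,P_d)$ is not closed, hence its closure contains some point of $\calV$; but $\Pic^0(X)$ acts transitively on the stable bundles of degree $e$ on an elliptic curve, so it contains all of $\calV$. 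This non-properness/transitivity argument is the actual content here, and your plan does not supply it.

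You also overclaim: you assert the explicit degeneration handles the entire range $4\bar g-3<g$, including $\bar g\ge 2$. The paper does \emph{not} prove this --- it is left as an open question (Question~4.6) whether $\calV$ is a component when $\bar g\ge 2$ and $g>4\bar g-3$. The theorem as stated only asserts non-existence for $\bar g\le 1$ and existence for $\bar g\ge 2$ with $4\bar g-3\ge g$, and your plan should respect that boundary.
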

This is Theorem~\ref{Thm: VectorComponent}.  That theorem, together with Theorem~\ref{Thm: LineBundleComp}, provides a more detailed description of the components.  Theorem~\ref{Thm: Main II} says nothing when $g \le 2 \bar{g}-1$, but this case is also discussed in Theorem~\ref{Thm: LineBundleComp},.

We also compute the connected components of $\operatorname{M}(\calO_{X}, P_d)$.  The statement below is a restatement of 
Theorem~\ref{theorem: Conn}.
\begin{theoremalpha}\label{Thm: Main III}
        For a ribbon $X$, the moduli space $\operatorname{M}(\calO_{X}, P_d)$ is connected. 
\end{theoremalpha}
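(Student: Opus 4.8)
The plan is to exploit the description of the closed points of $\operatorname{M}(\calO_X,P_d)$ furnished by Theorem~\ref{Thm: Main I}, together with the component structure of Theorem~\ref{Thm: Main II}, and show that every irreducible component meets a single distinguished component. Since a scheme is connected as soon as its irreducible components form a connected graph (two components being adjacent if they intersect), it suffices to exhibit, for each component, a point that it shares with one fixed reference component. I would take the reference component to be one of the $g$-dimensional components whose general member is a generalized line bundle, say the component $\mathcal{M}_0$ parameterizing (closures of) generalized line bundles of index $0$ — i.e. honest line bundles of degree $d$ on $X$.

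First I would handle the components whose general element is a generalized line bundle. These are indexed by the allowed values of the index $b$ (with $b$ running over an interval of integers of the appropriate parity, by Theorem~\ref{Thm: Main II} and the unstated companion Theorem~\ref{Thm: LineBundleComp}). The key observation is that a generalized line bundle $\calI = f_*(\calI')$ of index $b$, pushed forward from a partial normalization $f\colon X' \to X$, degenerates to one of higher index: concretely, one can let the line bundle $\calI'$ on $X'$ acquire a section vanishing at a point, or equivalently pass to a flat family in which the underlying partial normalization $X'$ further degenerates so that $g(X')$ drops by one; the limit is a generalized line bundle of index $b+2$ (indices change by $2$ when one crosses between "even" and "odd" families, but within a family by steps that link consecutive components). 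Making this precise — producing an explicit flat one-parameter family over a DVR whose generic fiber is a generalized line bundle of one index and whose special fiber has the next larger index — is the technical heart of this part; it is essentially a local computation at the point where the length-$2$ torsion structure of $\calI$ changes, and it shows consecutive line-bundle components intersect. Chaining these, all $n$ line-bundle components lie in one connected piece containing $\mathcal{M}_0$.

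Next I would attach the (at most one) vector-bundle component, when it exists. Its general point is $i_*\calE$ for a stable rank $2$ bundle $\calE$ on $X_{\red}$ of the degree specified in Theorem~\ref{Thm: Main I}. The strategy is to degenerate a generalized line bundle of the extremal allowed index — index $1 + g - 2\bar g$, which is exactly the boundary value at which Theorem~\ref{Thm: Main I} permits semi-stable $i_*\calE$ to appear — to a sheaf of the form $i_*\calE$: a generalized line bundle of maximal index is supported schematically closer and closer to $X_{\red}$, and in a suitable flat family its limit becomes a pushforward from $X_{\red}$. Thus the extremal line-bundle component meets the vector-bundle component. Alternatively, and perhaps more cleanly, one observes that an extension of $\calO_{X_{\red}}$-modules $0 \to L_1 \to \calE \to L_2 \to 0$ exhibits $i_*\calE$ as a specialization of $i_*(L_1 \oplus L_2)$, which is itself a (non-stable, hence boundary) generalized line bundle of the right index; a Rees-algebra / deformation-to-the-normal-cone argument produces the required family. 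The main obstacle throughout is purely computational: verifying flatness and identifying the special fibers of these degenerating families at the level of sheaves, i.e. checking that the Hilbert polynomial $P_d$ is preserved and that the limiting sheaf is the claimed one. Once every component is shown to meet $\mathcal{M}_0$ (directly or through the chain), connectedness of $\operatorname{M}(\calO_X,P_d)$ follows immediately. The degenerate range $g \le 2\bar g - 1$, where there is no stable generalized line bundle, would be treated separately using the companion Theorem~\ref{Thm: LineBundleComp}, but the same philosophy — link every component to a fixed one by an explicit flat family — applies verbatim.
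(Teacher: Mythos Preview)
Your strategy for linking the generalized line bundle components is essentially the one the paper uses: the explicit one-parameter families you envision are exactly the content of Lemmas~\ref{Lemma: HowGLBSpecialize I} and~\ref{Lemma: HowGLBSpecialize II}, and chaining them (or, as the paper does, pointing to a single common specialization) connects all of the $\bar{Z}_i$.

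The gap is in how you attach the vector bundle component. Your proposal is to take a generalized line bundle of the extremal index $b = 1 + g - 2\bar{g}$ and degenerate it to a sheaf of the form $i_*\calE$. This works when such a generalized line bundle exists, and indeed the paper uses essentially this observation (via $\operatorname{Gr}$-equivalence, Lemma~\ref{Lemma: FilterGLB}) in the case it calls ``$e$ even.'' But by Fact~\ref{Fact: TypeRestrict} the quantity $d - b(\calI)$ must be even, so when $d - (1 + g - 2\bar{g})$ is odd there is \emph{no} generalized line bundle of that extremal index at all: there are no strictly semi-stable generalized line bundles, and your degeneration has nothing to start from. Your alternative, the extension $0 \to L_1 \to \calE \to L_2 \to 0$, does not help either: the Rees degeneration sends $\calE$ to $L_1 \oplus L_2$, not the other way around, and in any case $i_*(L_1 \oplus L_2)$ is the direct image of a rank $2$ bundle on $X_{\red}$, not a generalized line bundle, so you remain inside the vector bundle locus.

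The paper closes this gap with a genuinely different idea: the line bundle locus $\operatorname{Pic}^d(X)$ is never proper (it fibers over $J^0(X_{\red})$ with affine fibers), while $\operatorname{M}(\calO_X, P_d)$ is projective. Hence the closure of the line bundle locus must contain some point that is not a generalized line bundle, and that point is necessarily a stable rank $2$ bundle on $X_{\red}$. The paper first treats the minimal case $g - 2\bar{g} + 1 = 1$ directly this way, then reduces the general odd-$e$ case to it by pushing forward along a suitable blow-up $f\colon X' \to X$. This properness argument is what your proposal is missing.
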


Finally, we determine the smooth locus of $\operatorname{M}(\calO_{X}, P_d)$.
\begin{theoremalpha}\label{Thm: Main IV}
	Let $X$ be a ribbon.  Set $g$ equal to the genus of $X$ and $\bar{g}$ equal to the genus of $X_{\red}$.  If $\bar{g} \ge 2$
and $g \geq 4 \bar{g}-2$, then the smooth locus of $\operatorname{M}(\calO_{X}, P_d)$ is equal to the open subset of  line bundles on $X$.
\end{theoremalpha}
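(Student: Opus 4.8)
The plan is to use deformation theory: a point of $\operatorname{M}(\calO_X,P_d)$ is smooth exactly when its Zariski tangent space has the expected dimension, and I will show that the line bundles are the only sheaves for which this holds. First I record the reduction. By Theorem~\ref{Thm: Main II} (applicable since $g\ge 4\bar g-2\ge 2\bar g>2\bar g-1$) every irreducible component of $\operatorname{M}(\calO_X,P_d)$ has dimension $g$, except possibly one of dimension $4\bar g-3<g$; hence $\dim_p\operatorname{M}(\calO_X,P_d)\le g$ for every point $p$, and since the Zariski tangent dimension is at least the local dimension, a sheaf $\calF$ with $\dim\Ext^1_{\calO_X}(\calF,\calF)>g$ defines a singular point. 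On the other hand, a degree $d$ line bundle $\calI$ on $X$ has index $0<1+g-2\bar g$, so by Theorem~\ref{Thm: Main I} it is stable, and its obstruction space is $\Ext^2_{\calO_X}(\calI,\calI)=H^2(X,\calO_X)=0$ because $X$ is one-dimensional; thus $\operatorname{M}(\calO_X,P_d)$ is smooth of dimension $h^1(\calO_X)=g$ along the open locus of line bundles. So the theorem reduces to: every semi-stable sheaf on $X$ that is not a line bundle defines a singular point, and by Theorem~\ref{Thm: Main I} there are exactly two types to treat. For the generic sheaf of each type the sheaf is stable, so its tangent space is $\Ext^1_{\calO_X}(\calF,\calF)$; the remaining (strictly semi-stable) ones follow by upper semi-continuity of $\dim T_p$ along the relevant irreducible closed loci.

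For $\calF=i_*\calE$ with $\calE$ a rank $2$ slope-stable bundle on $X_{\red}$, let $\calN$ be the conormal bundle of $X_{\red}$ in $X$, so that $\deg\calN=2\bar g-1-g$ by the ribbon genus formula. Using an (eventually periodic) locally free resolution of $i_*\calE$ over $\calO_X$ one computes $\ShExt^q_{\calO_X}(i_*\calE,i_*\calE)=i_*(\ShEnd(\calE)\otimes\calN^{\otimes(-q)})$, the key point being that the differentials of the resolution are multiplication by the section cutting out $X_{\red}$ and hence restrict to zero on $X_{\red}$. Feeding this into the local-to-global spectral sequence, which degenerates because $X_{\red}$ is a curve, gives
\begin{displaymath}
\dim\Ext^1_{\calO_X}(i_*\calE,i_*\calE)=h^1(X_{\red},\ShEnd\calE)+h^0(X_{\red},\ShEnd(\calE)\otimes\calN^{-1}).
\end{displaymath}
Since $\calE$ is slope-stable, $\ShEnd\calE$ is semi-stable of degree $0$ with $h^0=1$, so the first term is $4\bar g-3$; and $\deg\calN^{-1}=g+1-2\bar g$ is large enough, using $g\ge 4\bar g-2$, that $\ShEnd(\calE)\otimes\calN\otimes K_{X_{\red}}$ has negative degree, whence $h^1(\ShEnd(\calE)\otimes\calN^{-1})=0$ and the second term is $\chi(\ShEnd(\calE)\otimes\calN^{-1})=4g+8-12\bar g$. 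The total $4g+5-8\bar g$ exceeds $g$ because $3g+5-8\bar g\ge 3(4\bar g-2)+5-8\bar g=4\bar g-1>0$, so such points are singular.

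For $\calF=\calI$ a generalized line bundle of index $b\ge 1$ the sheaf $\ShExt^1_{\calO_X}(\calI,\calI)$ is torsion (as $\calI$ is generically locally free), so the local-to-global sequence reads $0\to H^1(\ShEnd\calI)\to\Ext^1_{\calO_X}(\calI,\calI)\to H^0(\ShExt^1_{\calO_X}(\calI,\calI))\to H^2(\ShEnd\calI)=0$. Set $q=\operatorname{length}(\ShEnd\calI/\calO_X)$ and $\ell=\operatorname{length}\ShExt^1_{\calO_X}(\calI,\calI)$, both finite since $\calI$ is generically locally free. Then $\chi(\ShEnd\calI)=\chi(\calO_X)+q$ and $h^0(\ShEnd\calI)=1$ (as $\calI$ is simple), so $h^1(\ShEnd\calI)=g-q$ and therefore $\dim\Ext^1_{\calO_X}(\calI,\calI)=g+(\ell-q)$. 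It thus suffices to prove $\ell>q$, which is a purely local statement at the finitely many points where $\calI$ fails to be locally free: at such a point the completed stalk of $\calI$ is, up to isomorphism, one of the non-free torsion-free rank-one modules $(t^n,\epsilon)\subset k[[t]][\epsilon]/(\epsilon^2)$ with $n\ge 1$, and an explicit computation of its (two-periodic) resolution shows that the local contribution to $\ell$ strictly exceeds the local contribution to $q$ — for $n=1$, where the stalk is the maximal ideal, these contributions are $2$ and $1$. Summing over the bad points yields $\ell>q$, hence $\dim\Ext^1_{\calO_X}(\calI,\calI)>g$; combined with the line bundle case this gives $\operatorname{M}(\calO_X,P_d)^{\mathrm{sm}}=\{\text{line bundles on }X\}$.

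I expect the main obstacle to be the last local module computation, together with the bookkeeping it entails: one must classify the torsion-free rank-one modules over the local ring of a ribbon (showing they are exactly the $(t^n,\epsilon)$ up to isomorphism and twist), verify the inequality between lengths for every $n$, and confirm the vanishing of the restricted boundary maps in the resolution of $i_*\calE$; one must also make precise the semi-continuity argument so that it genuinely covers every strictly semi-stable sheaf and every generalized line bundle of positive index (equivalently, do the length computation for all $(t^n,\epsilon)$ so as to bypass semi-continuity altogether). The Euler-characteristic identities $\chi(\ShEnd\calI)=\chi(\calO_X)+\operatorname{length}(\ShEnd\calI/\calO_X)$ and its analogue for $\ShExt^1$ are routine once one knows these sheaves are generically trivial, and the Riemann–Roch and stability inputs on $X_{\red}$ are standard.
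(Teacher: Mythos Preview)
Your approach is essentially the paper's: compute $\dim\Ext^1(\calF,\calF)$ for each type of stable sheaf via a local-to-global spectral sequence, compare with the component dimensions from Theorem~\ref{Thm: Main II}, and handle strictly semi-stable points by closure of the singular locus. The paper carries out exactly this program in Proposition~\ref{Prop: MainTanSpace} (split into Lemmas~\ref{Lemma: TanAtGLB} and \ref{Lemma:  TanAtVec}) and Corollary~\ref{Cor: SmoothLocus}.

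A few remarks on where your execution differs. For $i_*\calE$ the paper uses adjunction and the Grothendieck spectral sequence for the composite $\Hom_{X_{\red}}(-,\calE)\circ i^*i_*$, arriving at the same two summands $(4\bar g-3)+h^0(\ShEnd(\calE)\otimes\calN^{-1})$; your route through $\ShExt^q(i_*\calE,i_*\calE)\cong i_*(\ShEnd(\calE)\otimes\calN^{-q})$ and local-to-global is an equally valid alternative. For the generalized line bundle case, your inequality $\ell>q$ is exactly the content of the paper's computation: using the periodic resolution \eqref{Eqn: FreeRes} one finds the local contribution to $\ell$ at a point of local index $n$ is $2n$, while the local contribution to $q=b(\calI)$ is $n$, so in fact $\ell=2q$ and $\dim\Ext^1(\calI,\calI)=g+b(\calI)$ on the nose---you should do this for all $n$ rather than only $n=1$, as you yourself note. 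Your claim $h^0(\ShEnd\calI)=1$ is correct for \emph{stable} $\calI$ (it is $h^0(X',\calO_{X'})$, which equals $1$ since the nilradical of $X'$ has negative degree precisely when $b(\calI)<1+g-2\bar g$), which is all you need. Finally, for the strictly semi-stable points the paper makes your semi-continuity argument precise by observing that (since $\bar g\ge 2$) the stable locus is dense in $\operatorname{M}(2,e)$, so every such point is a specialization of a stable rank~$2$ bundle and hence lies in the (closed) singular locus.
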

This is Corollary~\ref{Cor: SmoothLocus}, which is a consequence of the computation of the tangent space to $\operatorname{M}_{\s}(\calO_{X}, P_d)$
at a point (Proposition~\ref{Prop: MainTanSpace}). That computation may be of independent interest.

How do these results compare with results in the literature?  In \cite{inaba}, Michi-Aki Inaba studied the moduli space of stable sheaves on a non-reduced scheme and, in particular, proved results about the local structure of the moduli space of slope stable sheaves on a ribbon (\cite[Thm.~2.6]{inaba}).  Closest
to our results is \cite[Rmk~2.7]{inaba}, which contains tangent space computations similar to Lemma~\ref{Lemma:  TanAtVec}.  

Beginning with \cite{drezet06}, Jean-Marc Dr{\'e}zet has written several papers (\cite{drezet06}, \cite{drezet08}, \cite{drezet09}, \cite{drezet11}) studying slope semi-stable sheaves on a multiple curve, the $n$-th order analogue of a ribbon.  Much of this work focuses on  ``quasi-locally free sheaves," a class of sheaves that 
 includes line bundles, but not generalized line bundles.    Most relevant to this paper is \cite{drezet11} (esp. Thm.~5.4.2),
 which the authors became aware of while preparing the current document. In that paper,  Dr{\'e}zet
 provides sufficient conditions for a pure sheaf of dimension $1$ (there called a ``torsion-free sheaf") on a primitive multiple curve  to be (semi-)stable.  Also relevant are Dr{\'e}zet's classification of pure sheaves with Hilbert polynomial $P_d$
 (\cite[\S5.4]{drezet08})  and  infinitesimal computations appearing in \cite{drezet06} (especially
 \cite[Prop.~8.1.3]{drezet06}, which  includes cases of Lemma~\ref{Lemma:  TanAtGLB}).

Finally, after this paper was written, Robert Lazarsfeld informed the authors that some similar
results can be found in \cite{donagi}.  There the authors study the Hitchin and Mukai systems, 
and the moduli space of stable sheaves associated to a ribbon lying on a $K3$-surface 
arises as the nilpotent cone of the Mukai space. The structure of the nilpotent cone is studied in some detail.  In particular, under the additional hypothesis that the nilradical $\calN$ of
$X$ is isomorphic to the anti-canonical bundle $K^{-1}_{X_{\text{red}}}$ of $X_{\text{red}}$
(so $g = 4 \bar{g}-3$), Theorem~\ref{Thm: Main II} is stated as \cite[Thm~3.2]{donagi}  (though
the proof is not given).

This  paper is organized as follows.  In Section~\ref{Sect: CohSh}, we collect some basic definitions and facts about coherent sheaves on ribbons  that are used later.  Theorem~\ref{Thm: Main I} is proven in Section~\ref{Sect: StableSh}, where the result is the culmination of  results on semi-stability.  Finally, the geometry of $\operatorname{M}(\calO_{X}, P_d)$ is studied in Section~\ref{Sect: Simpson}.  The main results proven in that section are Theorems \ref{Thm: Main II}, \ref{Thm: Main III} and \ref{Thm: Main IV}.

{\bf Acknowledgements.} The authors would like to thank Robert Lazarsfeld for informing them of his paper
\cite{donagi} with Donagi and Ein and for providing helpful expository suggestions.  The authors would also 
like to thank Daniel Erman for expository feedback,  Yusuf Mustopa for enlightening conversations about vector bundles on curves, Matt Satriano for helpful discussions on homological algebra.  

\section*{Conventions}\label{Sec: Conv}
\begin{convention}
	$k$ is an \textbf{algebraically closed field}.
\end{convention}

\begin{convention}
     A \textbf{curve} is an irreducible, projective $k$-scheme of dimension $1$.  
\end{convention}

\begin{convention}
	A \textbf{non-singular curve} is a curve that is smooth over $k$.
\end{convention}

\begin{convention}
     A \textbf{ribbon} $X$ is a curve such that the reduced subscheme $X_{\red}$ is a non-singular curve and the
\textbf{nilradical} $\calN$ is locally generated by a non-zero, square-zero element. 
\end{convention}

\begin{convention}
	A ribbon    $X$ is a \textbf{rational ribbon} if $X_{\red}$ is isomorphic to $\bbP^1$.
\end{convention}

\begin{convention}
      The \textbf{degree} $\deg(\calI)$ of a coherent sheaf $\calI$ on $X$ is
$$ \deg (\calI) := \chi(\calI) - \chi(\calO_X). $$
\end{convention}

\begin{convention}
	The \textbf{genus} $g(X)$ of a curve $X$ is 
	$$g := 1 - \chi(\calO_{X}).$$
\end{convention}

\begin{convention}
      $\eta$ is the \textbf{generic point} of $X$.
\end{convention}

\section{Coherent Sheaves on a Ribbon} \label{Sect: CohSh}
Here we collect the facts about coherent sheaves on ribbons that are needed to describe the Simpson moduli space.  In this section, let $X$ be a fixed ribbon with generic point $\eta$.  

Recall that, by definition, $X$ is a curve with the property that the reduced subcurve $X_{\text{red}}$ is non-singular and the nilradical $\calN$ is locally generated by a single square-zero, but non-zero element.  The nilradical is then square-zero, and hence may be considered as a line bundle on 
 $X_{\text{red}}$.  
 
 The degree of this line bundle can easily be computed from the exact sequence
 \begin{equation} \label{Eq: Conormal}
 	\calN \hookrightarrow \calO_{X} \twoheadrightarrow \calO_{X_{\text{red}}}.
 \end{equation}
We have:
 \begin{displaymath}
 	\deg(\calN) = 2 \bar{g} - 1 - g,
 \end{displaymath}
 where $g$ is the genus of $X$ and $\bar{g}$ the genus of $X_{\text{red}}$.  This quantity plays an important role in 
 many of the results of this paper, appearing, for example, as a bound in Theorem~\ref{Thm: Main I}.
 
 Notice that we have defined the genus $g$ by $g := 1 - \chi(\calO_{X})$.  The genus of a reduced curve $X$ always equals $h^{1}(X, \calO_{X})$, but this equality may fail to hold when $X$ is a ribbon.  Indeed, the two numbers are equal precisely when $\calN$ has no global sections, which holds when
$g$ is sufficiently large but not in general.

We now begin our review of coherent sheaves on a ribbon.  First,  some definitions from \cite{green95}.
\begin{definition} \label{Def: CohSh}
	If $\calF$ is a coherent sheaf on $X$, then we write $d(\calF)$ for the dimension of the support of $\calF$.  We say that
$\calF$ is \textbf{pure} if $d(\calF) = d(\calG)$ for all non-zero subsheaves $\calG \subset \calF$.

	If $\calF$ is a coherent sheaf, then we say that a regular function $f \in H^{0}(U,\calO_{X})$ is
a \textbf{non-zero divisor on $\calF$} if the multiplication map $f \cdot \underline{\phantom{s}} \colon \calF|_{U} \to \calF|_{U}$ is injective.  The sheaf $\calF$ is said
to be \textbf{torsion-free} if every non-zero divisor on $\calO_{X}$ is a non-zero divisor on $\calF$.  The sheaf $\calF$ is said to be \textbf{rank $1$} if the generic 
stalk $\calF_{\eta}$ is isomorphic to $\calO_{X, \eta}$. A \textbf{generalized line bundle} is a coherent sheaf that is rank $1$ and torsion-free.
\end{definition}

\begin{lemma} \label{Lemma: PureIsTorsionFree}
	Let $\calI$ be a coherent sheaf satisfying $d(\calI) =1$.  Then $\calI$ is torsion-free if and only if $\calI$ is pure.
\end{lemma}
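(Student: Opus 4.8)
The plan is to prove the two implications separately, using that $X$ is an irreducible curve, so $\operatorname{Supp}\calI=X$ and $\eta$ is the unique non-closed point. I would first record the standard reformulation of purity: with $j\colon\eta\hookrightarrow X$, the subsheaf $\calT(\calI):=\ker(\calI\to j_*\calI_\eta)$ is coherent, has $0$-dimensional support, and contains every subsheaf of $\calI$ with $0$-dimensional support; hence $\calI$ is pure if and only if $\calT(\calI)=0$, and since this condition is local on $X$, purity of $\calI$ forces purity of $\calI|_U$ for every nonempty open $U\subseteq X$. Note also that $\operatorname{Supp}(\calI|_U)=U$, so $d(\calI|_U)=1$.

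For ``pure $\Rightarrow$ torsion-free,'' I would take any non-zero divisor $f\in H^0(U,\calO_X)$ on $\calO_X|_U$ and observe that its germ $f_\eta$ is a non-zero divisor in the Artinian local ring $\calO_{X,\eta}$, hence a unit. Therefore the coherent subsheaf $\calK:=\ker\bigl(\calI|_U\xrightarrow{\ \cdot f\ }\calI|_U\bigr)$ satisfies $\calK_\eta=0$, so $\operatorname{Supp}\calK$ is a closed subset of $U$ missing the generic point and thus $d(\calK)\le 0$. Since $\calI|_U$ is pure, $\calK=0$, i.e. $f$ is a non-zero divisor on $\calI|_U$. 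As $U$ and $f$ are arbitrary, $\calI$ is torsion-free.

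For the converse I would argue contrapositively. If $\calI$ is not pure, choose a nonzero coherent subsheaf $\calG\subseteq\calI$ with $d(\calG)=0$, a point $p\in\operatorname{Supp}\calG$, and a socle element $0\neq s\in\calG_p\subseteq\calI_p$, so that $\mathfrak m_p s=0$. The crux is to exhibit a non-zero divisor of $\calO_{X,p}$ lying in $\mathfrak m_p$, and this is where the ribbon hypothesis enters. Because $\calN$ is a line bundle on $X_{\red}$ and $\calN_p\cdot\calN_p=0$, the $\calO_{X,p}$-module $\calN_p$ is isomorphic to $\calO_{X_{\red},p}$; applying $\operatorname{Ass}$ to the sequence $\calN_p\hookrightarrow\calO_{X,p}\twoheadrightarrow\calO_{X_{\red},p}$ and using that $\calO_{X_{\red},p}$ is a domain shows that $\calN_p$ is the unique associated prime of $\calO_{X,p}$. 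Hence the non-zero divisors of $\calO_{X,p}$ are exactly the elements outside $\calN_p$, and since $\calO_{X_{\red},p}$ is a discrete valuation ring, $\calN_p\subsetneq\mathfrak m_p$, so I may pick $f_0\in\mathfrak m_p\setminus\calN_p$: it is a non-zero divisor on $\calO_{X,p}$ with $f_0 s=0$. Finally I would lift $f_0$ to $f\in H^0(U,\calO_X)$ on an affine (hence irreducible) neighborhood $U$ of $p$; after shrinking $U$, the image of $f$ in the domain $\calO_{X_{\red}}(U\cap X_{\red})$ is nonzero, so $f_q\notin\calN_q$ for every $q\in U$ and therefore $f$ is a non-zero divisor on $\calO_X|_U$, while multiplication by $f$ on $\calI|_U$ kills $s\neq 0$. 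Thus $\calI$ is not torsion-free.

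The routine pieces above are standard commutative algebra; the one step where the hypothesis on $X$ is genuinely needed, and the main thing to get right, is the assertion that $\mathfrak m_p$ contains a non-zero divisor of $\calO_{X,p}$, i.e. that a ribbon has no embedded points. This fails for a general non-reduced $1$-dimensional curve, and it is exactly the defining property that the nilradical is locally generated by a single non-zero square-zero element (equivalently, is a line bundle on $X_{\red}$) that rescues it. A minor secondary point is the bookkeeping needed to pass from the pointwise statement ``$f_q$ is a non-zero divisor of $\calO_{X,q}$'' to the sheaf statement ``$f$ is a non-zero divisor on $\calO_X|_U$,'' which is handled by shrinking $U$ so that $U\cap X_{\red}$ is irreducible.
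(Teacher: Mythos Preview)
Your proof is correct and identifies the essential point clearly: the lemma hinges on the fact that a ribbon has no embedded points, i.e.\ $\operatorname{Ass}(\calO_{X,p})=\{\calN_p\}$, so that the zero-divisors of $\calO_X$ coincide with the nilradical. Your route, however, is organized differently from the paper's. For pure $\Rightarrow$ torsion-free, you pass to the generic point and use that a non-zero divisor becomes a unit in the Artinian local ring $\calO_{X,\eta}$, whence the kernel of multiplication by $f$ has $0$-dimensional support and must vanish by purity; the paper instead argues directly with $\operatorname{ann}(s)$, showing that an annihilator containing a non-zero divisor cannot be contained in every maximal ideal. For the converse, you argue contrapositively via a socle element and explicitly produce a non-zero divisor in $\mathfrak m_p\setminus\calN_p$, whereas the paper argues directly by reducing an arbitrary nonzero submodule to a cyclic one and showing that every minimal prime of its support is an annihilator consisting of zero-divisors, hence contained in $\calN$. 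Both arguments invoke the same structural fact about ribbons at the crucial step; your packaging via the torsion subsheaf $\calT(\calI)$ and the associated-prime computation makes the role of the ribbon hypothesis more transparent, while the paper's version is slightly more self-contained in that it avoids the formalism of $j_*\calI_\eta$ and socles. One minor remark: the ``shrinking $U$ so that $U\cap X_{\red}$ is irreducible'' is unnecessary, since any nonempty open in the irreducible curve $X$ is already irreducible.
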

\begin{proof}
	First, we assume $\calI$ is pure and prove that it is torsion-free.  Given a non-empty open subset $U \subset X$, a non-zero divisor $f \in H^{0}(U, \calO_{X})$
and a local section $s \in H^{0}(U, \calI)$ satisfying $f s =0$, we will show $s=0$.  Consider the annihilator 
$\operatorname{ann}(s)$.  The quotient $\calO_{U}/\operatorname{ann}(s)$ is isomorphic the submodule of $\calI|_{U}$ generated by $s$, so by purity, there are only two possibilities for the quotient: either its support equals $X$ or it is zero.  We can eliminate the first case.  Indeed, as $\operatorname{ann}(s)$ contains a non-zero divisor,
it is not contained in the nilradical, hence is not contained in some maximal ideal.  The corresponding point of 
$X$ does not lie in the support of $\calO_{X}/\operatorname{ann}(s)$, so this quotient must be zero.
Equivalently, $\operatorname{ann}(s)$ is the unit ideal and $s=0$.  This proves that $\calI$ is torsion-free.

 	Now we show that torsion-free implies pure.  Given a non-zero submodule $\calJ \subset \calI$  of a torsion-free module $\calI$, we will show $d(\calJ)=1$.  First, pick an affine open subset $U \subset X$ that meets the support of $\calJ$.  The integers $d(\calJ)$ and $d(\calJ|_{U})$ coincide, and $d(\cdot)$ does not increase if we replace $\calJ|_{U}$ with a 
submodule.  Thus, by replacing $\calJ|_{U}$ with a non-zero cyclic submodule, we can assume $\calJ|_{U}$ is cyclic.  Say $s \in H^{0}(U, \calJ)$ generates $\calJ|_{U}$.

	What are the possibilities for $\operatorname{Supp}(s)$? We claim that $\operatorname{Supp}(s)=U$.  
This support is non-empty because $s$ is non-zero, so it is enough to show that the only irreducible component
of $\operatorname{Supp}(s)$ is $U$.  Suppose $\mathfrak{p}$ is a prime ideal corresponding to such an
irreducible component. Then $\mathfrak{p} = \operatorname{ann}(f s)$
for some $f \in H^{0}(U, \calO_{X})$.  But $\calI$ is torsion-free, so $\mathfrak{p}$ is a union of zero-divisors and 
hence contained in the nilradical.  The only such prime ideal is the nilradical itself, so we may conclude that
$\operatorname{Supp}(s)$ equals $U$.  This proves that $d(\calJ)=1$ and thus that $\calI$ is pure.
\end{proof}

One application of Lemma~\ref{Lemma:  PureIsTorsionFree} is the following result, which provides an alternative 
characterization of generalized line bundles.

\begin{lemma} \label{Lemma:  EndToGLB}
	Let $X$ be a ribbon and $\calI$ a pure sheaf whose generic stalk is an 
$\calO_{X, \eta}$-module of  length $2$.   Then $\calI$ is a generalized line bundle if and only if the natural map
\begin{equation} \label{Eqn: ShEndMap}
	\calO_{X} \to \ShEnd(\calI)
\end{equation}
 is injective.
\end{lemma}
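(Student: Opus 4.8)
The plan is to reduce the equivalence to a single computation over the Artinian local ring $\calO_{X,\eta}$, after first clearing away the torsion-free half of the definition of ``generalized line bundle.''

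First I would note that although a generalized line bundle is by definition rank~$1$ \emph{and} torsion-free, the torsion-free half is automatic in this situation. Since $\calI_{\eta}$ has length~$2$ it is in particular nonzero, so $\eta$ lies in $\operatorname{Supp}(\calI)$; as $X$ is irreducible this forces $\operatorname{Supp}(\calI) = X$ and $d(\calI) = 1$, whence Lemma~\ref{Lemma: PureIsTorsionFree} tells us the pure sheaf $\calI$ is torsion-free. Therefore $\calI$ is a generalized line bundle precisely when it has rank~$1$, i.e. when $\calI_{\eta} \cong \calO_{X,\eta}$, and it remains to show that \eqref{Eqn: ShEndMap} is injective if and only if $\calI_{\eta} \cong \calO_{X,\eta}$.

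Next I would pass to the generic point. The kernel $\calK$ of \eqref{Eqn: ShEndMap} is a coherent subsheaf of $\calO_{X}$. A ribbon is Cohen--Macaulay --- locally $\calO_{X}$ is free of rank~$2$ over the nonsingular curve $X_{\red}$ --- so $\calO_{X}$ is a pure sheaf and a nonzero subsheaf of it cannot have zero generic stalk; hence $\calK = 0$ as soon as $\calK_{\eta} = 0$. Since $\ShEnd$ commutes with localization and localization is exact, $\calK_{\eta}$ is the kernel of the natural map $\calO_{X,\eta} \to \End_{\calO_{X,\eta}}(\calI_{\eta})$, so the problem becomes: this last map is injective if and only if $\calI_{\eta} \cong \calO_{X,\eta}$.

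Finally I would carry out the local computation. Write $R := \calO_{X,\eta}$; it is an Artinian local $k$-algebra of length~$2$ whose maximal ideal $\mathfrak{m} = \calN_{\eta}$ is one-dimensional over the residue field $K := k(X_{\red})$ and satisfies $\mathfrak{m}^{2} = 0$ (so $R \cong K[\epsilon]/(\epsilon^{2})$). Every length-$2$ $R$-module $M$ sits in an extension $0 \to K \to M \to K \to 0$; if it splits then $M \cong K \oplus K$, and if not then $\mathfrak{m}M \neq 0$, which forces $M$ to be cyclic by Nakayama and hence $M \cong R$. So $\calI_{\eta}$ is either $R$ or $K \oplus K$. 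In the first case $\End_{R}(\calI_{\eta}) = R$ and the natural map $R \to \End_{R}(\calI_{\eta})$ is the identity, hence injective; in the second case $\mathfrak{m}$ annihilates $\calI_{\eta}$, so $\calN_{\eta} = \mathfrak{m}$ lies in the kernel and the map is not injective. Combining the three steps gives the lemma. The only step that is not pure bookkeeping is the reduction to the generic stalk, and that rests solely on $\calO_{X}$ being a pure sheaf.
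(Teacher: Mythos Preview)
Your proof is correct and follows essentially the same route as the paper's: both reduce injectivity of \eqref{Eqn: ShEndMap} to injectivity at the generic point using purity of $\calO_{X}$, and then analyze the length-$2$ modules over the Artinian local ring $\calO_{X,\eta}\cong K[\epsilon]/(\epsilon^{2})$. The only cosmetic difference is that you first classify the two isomorphism types of length-$2$ modules and check each, whereas the paper handles the converse by directly exhibiting a cyclic generator $s_0$ with $\epsilon s_0\neq 0$; these are the same computation packaged differently.
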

\begin{proof}
	Let $X$ and $\calI$ be given.  We have just shown that
torsion-free is equivalent to pure (Lemma~\ref{Lemma:  PureIsTorsionFree}),
so the content of the lemma is that $\calI$ is rank $1$ if and only if the map \eqref{Eqn: ShEndMap}
is injective.  This map factors as
\begin{displaymath}
	\begin{CD}
		\calO_{X}		@>>>	\ShEnd(\calI)	\\
		@VVV					@VVV			\\
		\calO_{X,\eta}		@>>>	\ShEnd(\calI_{\eta}),
	\end{CD}
\end{displaymath}
where $\eta$ is the generic point. 

When $\calI$ is a generalized line bundle, the vertical maps are injective (because $\calO_{X}$ and $\calI$ are pure)
and $\calO_{X, \eta} \to \ShEnd(\calI_{\eta})$ is an isomorphism (because $\calI$ is 
rank $1$).  We may conclude that \eqref{Eqn: ShEndMap} is injective.

Conversely, assume this map is injective.  Pick a generator $\epsilon$ of the generic 
stalk $\calN_{\eta}$.  By hypothesis, multiplication by $\epsilon$ on $\calI_{\eta}$
is not the zero map, so there exists an element $s_0 \in \calI_{\eta}$ with
$\epsilon s_0 \ne 0$.

We claim that the natural map $\calO_{X, \eta} \to \calI_{\eta}$ given by
$f \mapsto f s_0$ is an isomorphism.  The kernel of this map is 
$\operatorname{ann}(s_0)$, which is contained in $\calN_{\eta}$ (because $\calI$ is pure). 
In fact, this containment is proper because $\epsilon s_0 \ne 0$.  The only ideal 
properly contained in the nilradical is the zero ideal, proving injectivity.
Surjectivity also follows:  both the module $\calI_{\eta}$ and the
submodule generated by $s_0$ have length $2$, so they
must coincide.  We may conclude that $\calI_{\eta}$ is isomorphic to
$\calO_{X, \eta}$, and the poof is complete.
\end{proof}

The previous two results allow us to easily classify the pure sheaves whose generic stalk has length $2$. The classification
result below can be found in the work of Dr{\'e}zet, but we include a proof for the sake of completeness. 

\begin{proposition}[{\cite[\S5.4]{drezet06}}] \label{Prop: PureSh}
	Let $X$ be a ribbon and $\calI$ a pure sheaf whose generic stalk is an $\calO_{X,\eta}$-module of length $2$.  
Then $\calI$ is isomorphic to one of the following:
\begin{enumerate}
	\item a generalized line bundle;
	\item the direct image $i_{*}(\calE)$ of a rank $2$ vector bundle $\calE$ on $X_{\red}$ under the inclusion map $i: X_{\red}\hookrightarrow X$.
\end{enumerate}
\end{proposition}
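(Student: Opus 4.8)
The plan is to prove the dichotomy by tracking how the nilradical acts on the generic stalk. Since $\calI_{\eta}\neq 0$ we have $d(\calI)=1$, so purity of $\calI$ coincides with torsion-freeness by Lemma~\ref{Lemma: PureIsTorsionFree} and the hypotheses of Lemma~\ref{Lemma: EndToGLB} are satisfied by $\calI$. Fix a generator $\epsilon$ of $\calN_{\eta}$; then $\calO_{X,\eta}$ is an Artinian local ring of length $2$ with maximal ideal $(\epsilon)$. I would split into the two cases $\epsilon\,\calI_{\eta}\neq 0$ and $\epsilon\,\calI_{\eta}=0$, which are exhaustive and lead to alternatives (1) and (2) respectively.

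In the first case I claim $\calI$ is a generalized line bundle. Choosing $s_{0}\in\calI_{\eta}$ with $\epsilon s_{0}\neq 0$, the map $\calO_{X,\eta}\to\calI_{\eta}$, $f\mapsto f s_{0}$, is injective because $\operatorname{ann}(s_{0})$ is a proper ideal contained in $\calN_{\eta}$ but not equal to it, and $\calN_{\eta}$ has no nonzero proper subideal; it is surjective by a length count, since both $\calI_{\eta}$ and $\calO_{X,\eta}s_{0}$ have length $2$. Thus $\calI_{\eta}\cong\calO_{X,\eta}$, so $\calI$ is rank $1$, and being torsion-free by Lemma~\ref{Lemma: PureIsTorsionFree} it is a generalized line bundle. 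Equivalently, one may observe that $\operatorname{ann}(\calI_{\eta})=0$ forces the kernel of $\calO_{X}\to\ShEnd(\calI)$ to vanish at $\eta$, hence to vanish since $\calO_{X}$ is pure on a ribbon, and then invoke Lemma~\ref{Lemma: EndToGLB} directly.

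In the second case I would show $\calI\cong i_{*}(\calE)$ for a rank $2$ vector bundle $\calE$ on $X_{\red}$. The subsheaf $\calN\calI\subseteq\calI$ vanishes at $\eta$, so purity of $\calI$ forces $\calN\calI=0$; hence $\calI$ is a sheaf of $\calO_{X}/\calN=\calO_{X_{\red}}$-modules, say $\calI=i_{*}(\calE)$ with $\calE$ coherent on $X_{\red}$. Because $\calN$ annihilates $\calI$, every $\calO_{X}$-subsheaf of $\calI$ is an $\calO_{X_{\red}}$-subsheaf, so pushforward along $i$ identifies the subsheaves of $\calE$ with those of $\calI$, compatibly with the dimension of support; thus purity of $\calI$ passes to $\calE$, and a pure, hence torsion-free, sheaf on the non-singular curve $X_{\red}$ is locally free. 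Finally the generic rank of $\calE$ is $\dim_{k(\eta)}\calI_{\eta}$, and since $\calI_{\eta}$ has length $2$ over $\calO_{X,\eta}$ and is killed by $\calN_{\eta}$, this dimension equals $2$.

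This is fundamentally a short case analysis, so there is no single serious obstacle; the two points requiring care are (a) upgrading the generic identity $\calN_{\eta}\calI_{\eta}=0$ to the global identity $\calN\calI=0$, which is exactly where purity of $\calI$ is used, and (b) checking that the resulting $\calO_{X_{\red}}$-module is not merely torsion-free but locally free of rank $2$, which uses regularity of the non-singular curve $X_{\red}$.
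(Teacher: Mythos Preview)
Your proof is correct and follows essentially the same approach as the paper: a dichotomy based on whether the nilradical acts trivially on $\calI$. The paper organizes this via the kernel of $\calO_{X}\to\ShEnd(\calI)$ (zero versus $\calN$) and invokes Lemma~\ref{Lemma: EndToGLB} for the first case, whereas you work directly with $\epsilon\,\calI_{\eta}$ and reprove the rank~$1$ conclusion by hand (while also noting, correctly, that one could cite Lemma~\ref{Lemma: EndToGLB} instead); in the second case your passage from $\calN_{\eta}\calI_{\eta}=0$ to $\calN\calI=0$ via purity makes explicit the step the paper compresses into ``the kernel equals $\calN$''.
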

\begin{proof}
	Given $\calI$, consider the natural map $\calO_{X} \to \ShEnd(\calI)$.
The kernel of this map consists of zero-divisors (Lemma~\ref{Lemma:  PureIsTorsionFree}), hence is contained in the nilradical $\calN$.  
There are only two ideals with this property: the 
zero ideal and the nilradical itself.  When the kernel is the zero ideal, 
Lemma~\ref{Lemma:  EndToGLB} states that $\calI$ is a generalized line bundle.  
Thus, we focus on the case where the kernel is $\calN$.

In this case, we can consider $\calI$ as a module $\calE$ over 
$\calO_{X}/\calN = \calO_{X_{\text{red}}}$.  Certainly, $\calE$ then
satisfies $\calI = i_{*}(\calE)$.  The module $\calE$ is pure, hence locally free
(by, say, the Auslander--Buchsbaum formula) and the
generic rank is $2$ by hypothesis.  In other words, $\calE$ is a rank $2$ vector bundle, completing
the proof.
\end{proof}

We conclude this section with a discussion of the sheaves appearing in Proposition~\ref{Prop: PureSh}.
At this level of generality, there is not much to be said about the classification of rank $2$ vector bundles 
on $X_{\text{red}}$.  When $X_{\text{red}}$ has genus $0$, every vector bundle is 
a direct sum of line bundles. But in general, there are non-split rank $2$ vector bundles and the classification is more complicated.  In Section~\ref{Sect: StableSh}, 
we introduce stability conditions and the semi-stable vector bundles on $X_{\text{red}}$ are coarsely parameterized by the corresponding Simpson moduli space.

There is a classification of generalized line bundles on $X$, which can be found in \cite[Thm 1.1]{green95}.  Given a Cartier divisor $D$ on $X_{\text{red}}$, we can consider $D$ as a closed subcheme of $X$ and form the blow-up $f \colon X' := \operatorname{Bl}_{D}(X) \to X$. The blow-up $X'$ is itself a ribbon and $f$ is a finite morphism.
In fact, the quotient $f_{*}(\calO_{X'})/\calO_{X}$ is isomorphic to $\calO_{D}$.  If $\calL'$ is a line bundle on $X'$, then the direct image $\calI := f_{*}(\calL')$ is a generalized line bundle.  Theorem~1.1 of \cite{green95}
states that every generalized line bundle is of this form for a unique blow-up $f \colon X' \to X$ and a unique line bundle $\calL'$.  Given
$\calI$, we call $f \colon X' \to X$ the \textbf{associated blow-up}.  Following Dr{\'e}zet (\cite[\S5.4]{drezet08}, we use the Green-Eisenbud result to define an invariant of $\calI$.  

\begin{definition}\label{Def: Local}
	Let $\calI$ be a generalized line bundle on $X$.  Say $\calI := f_{*}(\calI')$ for the blow-up $f \colon X' = \operatorname{Bl}_{D}(X) \to X$
and the line bundle $\calI'$ on $X'$.  As a divisor on $X_{\text{red}}$, write $D = \sum n_{p} p$.  Given a point $p_0 \in X$, we define the 
\textbf{local index} of $\calI$ at $p_0$, written $b_{p_0}(\calI)$, to be $b_{p_0}(\calI) := n_{p_0}$.  The \textbf{local index sequence} is the collection
$\{ b_{p_0}(\calI) \colon b_{p_0}(\calI) \ne 0 \}$, and the \textbf{index} of $\calI$ is defined by $b(\calI) := \sum b_{p}(\calI)$.
\end{definition}
The integers $b(\calI)$ and $b_{p_0}(\calI)$ can be defined more intrinsically.  The index $b(\calI)$ is the length of $\ShEnd(\calI)/\calO_{X}$,
while $b_{p_0}(\calI)$ is the length of the localization $\ShEnd(\calI_{p_0})/\calO_{X,p_0}$.  The index of a degree $d$ generalized line
bundle cannot be an arbitrary integer.  

\begin{fact} \label{Fact: TypeRestrict}
	If $\calI$ is a generalized line bundle, then $\deg(\calI) - b(\calI)$ is even.
\end{fact}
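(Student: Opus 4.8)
The plan is to reduce the claim to a computation of degrees using the associated blow-up $f \colon X' = \operatorname{Bl}_D(X) \to X$ guaranteed by the Green--Eisenbud classification (\cite[Thm~1.1]{green95}). Write $\calI = f_*(\calI')$ for a line bundle $\calI'$ on $X'$, and recall $b(\calI) = \deg(D)$ where $D = \sum n_p p$ is a Cartier divisor on $X_{\text{red}}$. Since $f$ is finite and $\calI'$ is a line bundle on the ribbon $X'$, we have $\chi(\calI) = \chi(f_*\calI') = \chi(\calI')$, so $\deg(\calI) = \chi(\calI') - \chi(\calO_X)$. The idea is that $\deg(\calI')$ on $X'$ and $\deg(\calI)$ on $X$ differ by a correction term coming from the length of $f_*(\calO_{X'})/\calO_X \cong \calO_D$, which has length $\deg(D) = b(\calI)$.

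More precisely, first I would establish the degree shift formula $\deg(\calI) = \deg_{X'}(\calI') - b(\calI)$, where $\deg_{X'}$ denotes the degree of a coherent sheaf on $X'$ as in the Conventions. This follows from the exact sequence $\calO_X \hookrightarrow f_*\calO_{X'} \twoheadrightarrow \calO_D$, which gives $\chi(f_*\calO_{X'}) = \chi(\calO_X) + b(\calI)$, i.e. $\chi(\calO_{X'}) = \chi(\calO_X) + b(\calI)$, hence $g(X') = g(X) - b(\calI)$ (consistent with the index definition $b(\calI) = g(X) - g(X')$ mentioned in the Introduction). Combining with $\chi(\calI) = \chi(\calI')$ then yields
\begin{displaymath}
	\deg(\calI) = \chi(\calI') - \chi(\calO_X) = \bigl(\chi(\calI') - \chi(\calO_{X'})\bigr) + b(\calI) = \deg_{X'}(\calI') + b(\calI).
\end{displaymath}
Wait — the sign needs care; I would double-check it against the index convention, but in any case the conclusion is $\deg(\calI) - b(\calI) = \deg_{X'}(\calI') \pm b(\calI) \mp b(\calI)$, so $\deg(\calI) - b(\calI)$ equals $\deg_{X'}(\calI')$ up to an even multiple of $b(\calI)$; either way its parity matches that of $\deg_{X'}(\calI')$.

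Second, I would show that the degree of any line bundle $\calL'$ on a ribbon $X'$ is even. For this, use the exact sequence $\calN_{X'} \hookrightarrow \calO_{X'} \twoheadrightarrow \calO_{X'_{\text{red}}}$ tensored with $\calL'$: since $\calN_{X'}$ is a square-zero ideal, it is an $\calO_{X'_{\text{red}}}$-module (a line bundle on $X'_{\text{red}}$), and $\calL'|_{X'_{\text{red}}}$ is also a line bundle on $X'_{\text{red}}$. Writing $\bar g' = g(X'_{\text{red}}) = \bar g$ and using the genus formula, one computes $\deg_{X'}(\calL') = \deg_{X'_{\text{red}}}(\calL' \otimes \calN_{X'}) + \deg_{X'_{\text{red}}}(\calL'|_{X'_{\text{red}}}) + (\text{something involving } \chi(\calO_{X'}))$; the two line-bundle degrees on $X'_{\text{red}}$ differ by $\deg(\calN_{X'})$, an odd number by the formula $\deg(\calN) = 2\bar g - 1 - g(X')$, but they are added to $2\chi(\calO_{X'_{\text{red}}}) - 2\chi(\calO_{X'})$ appropriately so that the total is even. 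The cleanest version: for a line bundle $\calL'$ on $X'$, $\deg_{X'}(\calL') = 2 \deg_{X'_{\text{red}}}(\calL'|_{X'_{\text{red}}}) - \deg(\calN_{X'})$, and since $\deg(\calN_{X'}) = 2\bar g - 1 - g(X')$ while $g(X') = g - b(\calI)$, we get $\deg_{X'}(\calL') \equiv g(X') + 1 \equiv g - b(\calI) + 1 \pmod 2$. Tracing this back through the degree shift of the first step gives $\deg(\calI) \equiv b(\calI) \pmod 2$, which is exactly the assertion.

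The main obstacle is bookkeeping the various shifts between $X$ and $X'$ and getting all the signs and the roles of $\chi$ versus $h^1$ straight — in particular remembering that $g(X)$ is defined as $1 - \chi(\calO_X)$, not $h^1$, so that $\chi$ is additive in exact sequences with no correction. An alternative, perhaps slicker, route avoids $X'$ entirely: use the inclusion $\calN\cdot\calI \hookrightarrow \calI \twoheadrightarrow \calI/\calN\calI$. Both $\calN\calI$ (as a quotient of $\calI \otimes \calN$, which is generically of length $1$) and $\calI/\calN\calI$ are $\calO_{X_{\text{red}}}$-modules, and for a generalized line bundle they are rank-$1$ torsion-free, hence line bundles on $X_{\text{red}}$; then $\deg(\calI) = \deg(\calN\calI) + \deg(\calI/\calN\calI) + (2\bar g - 2 - 2g + 2)$-type constant, and one identifies $\deg(\calN\calI) - \deg(\calI/\calN\calI)$ in terms of $\deg\calN$ and the local indices $b_p(\calI)$, whose sum is $b(\calI)$. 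I would develop whichever of these two computations turns out to produce the fewest sign headaches, but both rest on the same elementary principle that on $X_{\text{red}}$ every invertible sheaf has a well-defined integer degree and the parity obstruction comes entirely from the odd number $\deg(\calN)$.
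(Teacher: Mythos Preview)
Your approach is exactly the paper's: reduce to a line bundle $\calI'$ on the blow-up $X'$ via $\deg(\calI)=\deg_{X'}(\calI')+b(\calI)$, then show that any line bundle on a ribbon has even degree using the conormal sequence. Your first step is correct as written.

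Your second step, however, is tangled because of a computational slip. Tensoring $\calN_{X'}\hookrightarrow\calO_{X'}\twoheadrightarrow\calO_{X'_{\text{red}}}$ with a line bundle $\calL'$ and taking Euler characteristics gives
\[
\chi(\calL')=\chi(\calL'|_{X'_{\text{red}}}\otimes\calN_{X'})+\chi(\calL'|_{X'_{\text{red}}})
=2\deg(\calL'|_{X'_{\text{red}}})+\deg(\calN_{X'})+2(1-\bar g),
\]
and applying the same identity to $\calL'=\calO_{X'}$ gives $\chi(\calO_{X'})=\deg(\calN_{X'})+2(1-\bar g)$. Subtracting, the $\deg(\calN_{X'})$ term \emph{cancels}, so
\[
\deg_{X'}(\calL')=2\deg(\calL'|_{X'_{\text{red}}}),
\]
which is manifestly even. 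Your formula $\deg_{X'}(\calL')=2\deg(\calL'|_{X'_{\text{red}}})-\deg(\calN_{X'})$ has a stray term, which is why your mod-$2$ chase produced the wrong congruence. Once this is fixed, there is no parity bookkeeping to do at all: $\deg(\calI)-b(\calI)=\deg_{X'}(\calI')$ is even, and you are done. The paper's proof is exactly this two-line computation.
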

\begin{proof}
	If $\calI$ is a line bundle, by definition $b(\calI) = 0$. Tensoring the sequence $\calN \hookrightarrow \calO_{X} \twoheadrightarrow \calO_{X_{\text{red}}}$
with $\calI$ and taking Euler characteristics, we see that 
$$\deg(\calI) = 2 \cdot \deg(\calI \otimes \calO_{X_{\text{red}}}), $$ 
which is an even number. For the general case, write $\calI = f_{*}(\calI')$ for some line bundle $\calI'$ on a blow-up $f \colon X' \to X$.  The Euler characteristics $\chi(\calI)$ and
$\chi(\calI')$ are equal. Writing out these numbers, we see 
\begin{align*}
	\deg(\calI) &= \deg(\calI') + g - \bar{g} \\
			&= \deg(\calI') + b(\calI).
\end{align*}
	In particular, $\deg(\calI)-b(\calI)$ is even.
\end{proof}

One consequence of the classification theorem of Green--Eisenbud is that the generalized Jacobian, or moduli space of 
degree $0$ line bundles,  acts transitively on the set of generalized
line bundles with fixed local index sequence.  This fact will be used later, so we record it.
\begin{lemma} \label{Lemma: ActionIsTrans}
	Let $X$ be a ribbon.  If $\calI_1$ and $\calI_2$ are two generalized line bundles that have the same local index at $p$ for all $p \in X$, then there exists a line bundle
$\calL$ on $X$ such that $\calI_1$ is isomorphic to $\calL \otimes \calI_2$.  Furthermore, any two line bundles with this property differ by 
an element of $\ker(f^{*} \colon \operatorname{Pic}(X) \to \operatorname{Pic}(X'))$, where $f \colon X' \to X$ is the blow-up associated
to $\calI_i$.
\end{lemma}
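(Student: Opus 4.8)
The plan is to use the Green--Eisenbud classification to replace the two generalized line bundles by line bundles on a common blow-up, thereby reducing the statement to the surjectivity of the pullback map $f^{*}$ on Picard groups. Since $\calI_{1}$ and $\calI_{2}$ have the same local index at every point, Definition~\ref{Def: Local} shows that their associated Cartier divisors on $X_{\red}$ coincide; denote this common divisor by $D$. By \cite[Thm 1.1]{green95} the two sheaves then share the same associated blow-up $f\colon X' = \operatorname{Bl}_{D}(X)\to X$, and we may write $\calI_{j} = f_{*}(\calL_{j}')$ for line bundles $\calL_{j}'$ on $X'$. We will use that $X'$ is again a ribbon, that $f$ restricts to an isomorphism $X'_{\red}\to X_{\red}$ (a finite birational morphism of non-singular curves), and that $\deg D$ equals the index $g(X)-g(X')$. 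Set $\calM':=\calL_{1}'\otimes(\calL_{2}')^{\vee}\in\operatorname{Pic}(X')$. Granting that $f^{*}\colon\operatorname{Pic}(X)\to\operatorname{Pic}(X')$ is surjective, pick $\calL\in\operatorname{Pic}(X)$ with $f^{*}\calL\cong\calM'$; the projection formula then gives
\begin{displaymath}
	\calL\otimes\calI_{2}\;=\;\calL\otimes f_{*}(\calL_{2}')\;\cong\;f_{*}\!\left(f^{*}\calL\otimes\calL_{2}'\right)\;\cong\;f_{*}(\calM'\otimes\calL_{2}')\;\cong\;f_{*}(\calL_{1}')\;=\;\calI_{1},
\end{displaymath}
which proves the first assertion. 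So everything reduces to the surjectivity of $f^{*}$, and I expect that to be the main obstacle.

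To establish that surjectivity I would compare the two standard exact sequences computing the Picard group of a ribbon. Let $\calN'$ denote the nilradical of $X'$. Using $1+\calN\cong\calN$ (because $\calN^{2}=0$) and the vanishing of $H^{2}$ of a projective curve, the sequence $1+\calN\hookrightarrow\calO_{X}^{*}\twoheadrightarrow\calO_{X_{\red}}^{*}$ and its counterpart on $X'$ give the rows of a commutative diagram
\begin{displaymath}
	\begin{CD}
		H^{1}(X_{\red},\calN) @>>> \operatorname{Pic}(X) @>>> \operatorname{Pic}(X_{\red}) @>>> 0 \\
		@VVV @VV{f^{*}}V @| @. \\
		H^{1}(X_{\red},\calN') @>>> \operatorname{Pic}(X') @>>> \operatorname{Pic}(X_{\red}) @>>> 0
	\end{CD}
\end{displaymath}
in which the right vertical arrow is an isomorphism and the left vertical arrow is induced by the inclusion $\calN\hookrightarrow\calN'$ of nilradicals inside $f_{*}\calO_{X'}$. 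Applying $\deg\calN=2\bar{g}-1-g$ on $X$ and on $X'$ gives $\deg\calN'-\deg\calN=g(X)-g(X')=\deg D\ge 0$, so the cokernel of $\calN\hookrightarrow\calN'$ is a torsion sheaf on the non-singular curve $X_{\red}$ and hence has vanishing $H^{1}$; therefore the left vertical arrow is surjective. A diagram chase then forces $f^{*}$ to be surjective. (Alternatively, one can argue with the exact sequence $1\to\calO_{X}^{*}\to f_{*}\calO_{X'}^{*}\to\calQ\to 1$ on $X$: its cokernel $\calQ$ is supported on the finite set $D$, so $H^{1}(\calQ)=0$, and since $f$ is finite $H^{1}(X,f_{*}\calO_{X'}^{*})=H^{1}(X',\calO_{X'}^{*})=\operatorname{Pic}(X')$, yielding the surjection at once.)

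For the final clause, suppose $\calL_{1}$ and $\calL_{2}$ are line bundles on $X$ with $\calL_{i}\otimes\calI_{2}\cong\calI_{1}$ for $i=1,2$. Rewriting both sides by the projection formula gives $f_{*}(f^{*}\calL_{1}\otimes\calL_{2}')\cong f_{*}(f^{*}\calL_{2}\otimes\calL_{2}')$; since each side exhibits this generalized line bundle as the direct image of a line bundle along the single blow-up $f$, the uniqueness in \cite[Thm 1.1]{green95} forces $f^{*}\calL_{1}\otimes\calL_{2}'\cong f^{*}\calL_{2}\otimes\calL_{2}'$, hence $f^{*}(\calL_{1}\otimes\calL_{2}^{\vee})\cong\calO_{X'}$, i.e. $\calL_{1}\otimes\calL_{2}^{\vee}\in\ker(f^{*}\colon\operatorname{Pic}(X)\to\operatorname{Pic}(X'))$, as required.
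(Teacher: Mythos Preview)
Your proof is correct and follows essentially the same approach as the paper's: write $\calI_{1}$ and $\calI_{2}$ as direct images of line bundles along the common associated blow-up, invoke surjectivity of $f^{*}\colon\Pic(X)\to\Pic(X')$, and apply the projection formula. The only difference is that the paper simply asserts the surjectivity of $f^{*}$, whereas you supply an argument for it (indeed two); your alternative argument via the sequence $1\to\calO_{X}^{*}\to f_{*}\calO_{X'}^{*}\to\calQ\to 1$ is particularly clean here since $f$ is a homeomorphism on underlying spaces.
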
 
\begin{proof}
	If $f \colon X' \to X$ is the blow-up as in Definition~\ref{Def: Local}, then we can write $\calI_i = f_{*}(\calL_i)$ for 
line bundles $\calL_1$ and $\calL_2$ on $X'$.  The map $f^{*} \colon \operatorname{Pic}(X) \to \operatorname{Pic}(X')$ is surjective,
so we can find a line bundle $\calL$ such that $f^{*}(\calL) = \calL_1 \otimes \calL_2^{-1}$.  An application of the projection formula
shows that this line bundle satisfies the desired conditions, and that any other line bundle $\calM$ with this property must
satisfy $f^{*}(\calM) \cong f^{*}(\calL)$.  
\end{proof}
The structure of the generalized Jacobian $J^{0}(X)$ of $X$ can be described explicitly: the pullback map 
$i^{*} \colon J^{0}(X) \to J^{0}(X_{\text{red}})$ from the generalized Jacobian of $X$ to the Jacobian of $X_{\text{red}}$
is surjective with kernel equal to the vector space group associated to $H^{1}(X_{\text{red}}, \calN)$.  

This description of
$J^{0}(X)$ can  be found in \cite[\S~2.3]{drezet08} or \cite[Sect.~9.2]{bosch}.  One approach is to use the description of $J^{0}(X)$ as the  identity  component of the moduli space $\operatorname{Pic}(X)$ of line bundles on $X$.  In
\cite{bosch},  $\operatorname{Pic}(X)$ is described as the scheme that represents the \'{e}tale sheaf $R^{1}g_{*}(\calO_{X}^{\ast})$, where $g \colon X \to \Spec(k)$ is the structure morphism, and then  computed using long exact sequence associated to the ``exponential sequence"
\begin{displaymath}
	\calN \hookrightarrow \calO_{X}^{\ast} \twoheadrightarrow \calO_{X_{\text{red}}}^{\ast}.
\end{displaymath}
In any case, we see that the dimension of $J^{0}(X)$ equals $h^{1}(X, \calO_{X})$ and $J^{0}(X)$ is 
not proper once $g \ge \bar{g}+1$ (as then $H^{1}(X_{\text{red}}, \calN) \ne 0$).  These two facts
will be used several times in Section~\ref{Sect: Simpson}.

If we pass from $X$ to its completed local ring at a point, then we can describe generalized line bundles more explicitly.
\begin{definition} \label{Def: LocalType}
	Set $\calO_{0} := k[[s, \epsilon]]/(\epsilon^2)$.  Define the \textbf{$n$-th blow-up algebra} to be
	$\calO_{n} :=  k[[ \tilde{s}, \tilde{\epsilon}]]/(\tilde{\epsilon}^2)$, considered as an  
	$\calO_0$-algebra via $s \mapsto \tilde{s}$, $\epsilon \mapsto \tilde{\epsilon} \tilde{s}^n$.
	We write $I_n$ for $\calO_n$, considered as an $\calO_0$-module.
\end{definition}
One may easily compute that $\calO_0 \to \calO_{n}$ is the algebra extension corresponding to the blow-up of 
$\calO_0$ along $(s^n, \epsilon)$.  Given a blow-up $f \colon X' := \operatorname{Bl}_{D}(X) \to X$ as in Definition~\ref{Def: Local}
and a point $q \in X'$ mapping to $p \in X'$, the induced map $\widehat{\calO}_{X, p} \to \widehat{\calO}_{X', q}$
can be identified with $\calO_{0} \to \calO_{n_{p}}$.   Combining this observation with \cite[Thm.~1.1]{green95},
 we can deduce the following lemma.
\begin{lemma}
	Let $X$ be a ribbon, $p_0 \in X$ a point and $\calI$ a generalized line bundle.  Fix an isomorphism between $\calO_{0}$ and
$\widehat{\calO}_{X, p_0}$.  Then under this isomorphism, $\calI \otimes \widehat{\calO}_{X, p_0}$ is identified with 
a module isomorphic to $I_n$, where $n = b_{p_0}(\calI)$.
\end{lemma}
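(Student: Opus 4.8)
The plan is to reduce the statement to the local description of blow-ups recorded in the paragraph just before the lemma, using the Green--Eisenbud classification \cite[Thm.~1.1]{green95}. By that theorem we may write $\calI = f_*(\calL')$ for the associated blow-up $f \colon X' = \operatorname{Bl}_D(X) \to X$ and a line bundle $\calL'$ on $X'$, and we may write $D = \sum_p n_p\, p$ on $X_{\red}$, so that $n_{p_0} = b_{p_0}(\calI)$ by Definition~\ref{Def: Local}. The whole question is local at $p_0$, so I would first observe that all that matters is the isomorphism class of $\calI \otimes_{\calO_{X,p_0}} \widehat{\calO}_{X,p_0}$ as a $\widehat{\calO}_{X,p_0}$-module: any two identifications $\calO_0 \cong \widehat{\calO}_{X,p_0}$ differ by an automorphism of $\calO_0$, and a short check shows such an automorphism carries $I_n$ to an isomorphic $\calO_0$-module, so we are free to use the particular identification furnished by the discussion preceding the lemma.

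Next I would commute the finite pushforward $f_*$ past completion. Writing $A = \calO_{X,p_0}$ and letting $B$ be the semi-local ring of $X'$ along the finite fibre $f^{-1}(p_0)$, the module $M := (f_*\calL')_{p_0}$ is finite over $A$ and its $A$-module structure factors through $B$; since $\mathfrak{m}_A B$ is an ideal of definition of $B$, the $\mathfrak{m}_A$-adic completion $\widehat{B}$ is the product of the complete local rings $\widehat{\calO}_{X',q}$ over the points $q \mapsto p_0$, and $\widehat{A}\otimes_A M \cong \widehat{B}\otimes_B M \cong \prod_{q \mapsto p_0} \widehat{\calO}_{X',q}\otimes_{\calO_{X',q}}\calL'_q$. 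By the identification recorded before the lemma, the map $\widehat{\calO}_{X,p_0}\to\widehat{\calO}_{X',q}$ is $\calO_0\to\calO_{n_{p_0}}$; since $\calO_{n_{p_0}}$ is local this forces $f^{-1}(p_0)$ to be a single point $q_0$, with $\widehat{\calO}_{X',q_0}\cong\calO_{n_{p_0}}$ compatibly with $\widehat{\calO}_{X,p_0}\cong\calO_0$. As $\calL'$ is invertible, $\widehat{\calO}_{X',q_0}\otimes_{\calO_{X',q_0}}\calL'_{q_0}$ is free of rank one over $\widehat{\calO}_{X',q_0}$, so chaining the isomorphisms yields $\calI\otimes_{\calO_{X,p_0}}\widehat{\calO}_{X,p_0}\cong\calO_{n_{p_0}}$ as $\calO_0$-modules, i.e. $\cong I_{n_{p_0}} = I_{b_{p_0}(\calI)}$.

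I expect the only real obstacle to be the middle step: spelling out that $\mathfrak{m}_A$-adic completion commutes with the finite pushforward (a standard flat base change / semilocal-completion fact) and that the identifications remain compatible with the fixed isomorphism $\widehat{\calO}_{X,p_0} \cong \calO_0$, so that the $\calO_0$-module structure one arrives at is precisely the one from Definition~\ref{Def: LocalType}. All the geometric content — that $f^{-1}(p_0)$ is a single point and that $f$ looks like $\Spec\calO_{n_{p_0}} \to \Spec\calO_0$ near it — is supplied by \cite[Thm.~1.1]{green95} together with the paragraph preceding the statement, so once the base-change bookkeeping is in place the proof is immediate.
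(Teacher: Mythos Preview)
Your proposal is correct and follows essentially the same route as the paper: write $\calI = f_*(\calL')$ via the Green--Eisenbud classification, pass to completions at $p_0$, invoke the identification $\widehat{\calO}_{X,p_0}\to\widehat{\calO}_{X',q_0}$ with $\calO_0\to\calO_n$ from the paragraph preceding the lemma, and observe that a line bundle becomes free of rank one after completing. The paper's version is simply terser---it asserts uniqueness of the point $q$ over $p_0$ directly (which is clear since $X'_{\red}=X_{\red}$) rather than deducing it from locality of $\calO_n$, and it suppresses the semi-local completion bookkeeping you spell out---but there is no substantive difference in strategy.
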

\begin{proof}
	Write $\calI = f_{*}(\calL')$ for the blow-up $f \colon X' := \operatorname{Bl}_{D}(X) \to X$ and a line bundle $\calL'$ on $X'$.
If $q \in X'$ is the unique point mapping to $p$, then the identification of $\widehat{\calO}_{X,p}$ with $\calO_0$ extends to 
an identification of the $\hat{f} \colon \widehat{\calO}_{X,p} \to \widehat{\calO}_{X', q}$ map on completed local rings with 
$\calO_0 \to \calO_n$.  In particular, $\calI \otimes \widehat{\calO}_{X,p}$ is identified with the direct image of a line bundle
on $\calO_n$, and such a module is isomorphic to $\calO_n$ itself.
\end{proof}

For later computations, it is convenient to have an alternative description of $I_n$.  This module can also be described as the
ideal $(s^n, \epsilon)$ of $\calO_0$.  One isomorphism from $I_n$ to $(\epsilon, s^n)$ is given by the
map sending $1 \in I_n$ to $s^n$ and $\tilde{\epsilon} \in I_n$ to $\epsilon$.  This common module admits the following 
presentation:
\begin{displaymath}
	\langle e, f \colon \epsilon f=0, s^n f = \epsilon e \rangle.
\end{displaymath}
Here the element $e$ corresponds to $1 \in I_n$ and $f$ corresponds to $\tilde{\epsilon}$.  This presentation in fact extends to the 
period resolution: 
			\begin{equation} \label{Eqn: FreeRes}
				\cdots \longrightarrow \calO_{0}^{2} \stackrel{ \begin{pmatrix} \epsilon & s^{n} \\ 0 & -\epsilon \end{pmatrix} }{\longrightarrow} \calO_{0}^{2} \stackrel{ \begin{pmatrix} \epsilon & s^{n} \\ 0 & -\epsilon \end{pmatrix} }{\longrightarrow} \calO_{0}^2 \longrightarrow I_{n} \longrightarrow 0. 
			\end{equation}
Later, we shall use this presentation to describe how $I_n$ deforms.

\section{Stable Sheaves on a Ribbon} \label{Sect: StableSh}
Here we study the stability of generalized line bundles.  In general, the stability of a coherent sheaf on a projective scheme
is a condition defined in terms of an auxiliary ample line bundle $\calL$.  However, on a ribbon the stability condition
is independent of $\calL$, as we will see. To fix ideas, let us first work with a polarized ribbon $(X, \calL)$.  As before, we write
$g$ for the genus of $X$, $\bar{g}$ for the genus of $X_{\text{red}}$ and $\calN$ for the nilradical of $X$.
We now recall the definition of the Hilbert polynomial.

Given a coherent sheaf $\calI$ on $X$, the \textbf{Hilbert Polynomial} $ P(\calI,t)$ of $\calI$ with respect 
to $\calL$ is the unique polynomial satisfying 
\begin{displaymath}
	 P(\calI, n) = \chi(\calI \otimes \calL^{\otimes n})  \text{ for all $n \in \bbN$.}
\end{displaymath}
The leading term of $P(\calI, t)$ is particularly significant.  Write 
\begin{displaymath}
	P(\calI,t) = a_0 t^{d}/d! + a_1 t^{d-1}/(d-1)! + \dots + a_d.
\end{displaymath}
 We have
\begin{gather*}
	d = d(\calI), \text{ the dimension of $\operatorname{Supp}(\calI)$;}\\
	a_0 = \deg(\calL) \operatorname{len}(\calI_{\eta}), 
\end{gather*}
where $\operatorname{len}(\calI_{\eta})$ is the length of $\calI_{\eta}$ as an $\calO_{X,\eta}$-module.

If $\calI$ is a sheaf on $X$ with $d(\calI)=1$, then the \textbf{slope} $\mu(\calI)$ of $\calI$ with respect to $\calL$ is defined to be
\begin{align*}
	\mu (\calI) := a_1/a_0.
\end{align*}
We say that $\calI$ is \textbf{slope semi-stable} with respect to $\calL$ if $\calI$ is pure and for all non-zero pure subsheaves $\calJ \subset \calI$ we have
\begin{displaymath}
	\mu(\calJ) \le \mu(\calI).
\end{displaymath}
If this inequality is always strict, we say $\calI$ is \textbf{slope stable}. A semi-stable sheaf that is not stable is said to be \textbf{strictly semi-stable}.  An equivalent formulation of semi-stability is that $\mu(\calI) \le \mu(\calJ)$ for 
all quotients $\calI \twoheadrightarrow \calJ$ with $\calJ$ pure of dimension $d(\calJ) = d(\calI)$, and similarly with stability.  

We have just given the general
definition of semi-stability, but observe that, on a ribbon, the condition is independent of $\calL$.  Indeed, the slope $\mu(\calI)$ depends on $\calL$,
but replacing $\calL$ with a different ample line bundle $\calM$ modifies the slope by a factor of $\deg(\calL)/\deg(\calM)$, so the slope semi-stability 
condition is unchanged.

Given a semi-stable sheaf $\calI$, there exists a filtration $0 = \calI_0 \subset \cdots \subset \calI_n = \calI$ with the property that the successive quotients
$\calI_k/ \calI_{k-1}$ are slope stable of dimension $d(\calI)$ and the slopes $\mu(\calI_{k}/\calI_{k-1})$ are all equal.  This filtration is not unique, but
the associated direct sum
\begin{displaymath}
	\operatorname{Gr}(\calI) := \bigoplus_{k} \calI_{k}/\calI_{k-1}
\end{displaymath}
is unique up to isomorphism. Two coherent sheaves $\calI$ and $\calI'$ are said to be \textbf{Gr-equivalent} if there is an isomorphism $\operatorname{Gr}(\calI) \cong \operatorname{Gr}(\calI')$.  Given a semi-stable sheaf $\calI$, observe
that $\operatorname{Gr}(\calI)$ is also a semi-stable sheaf whose associated graded is $\operatorname{Gr}(\calI)$.

We now specialize to the case of semi-stable sheaves on $X$ with Hilbert polynomial
\begin{equation}
	P_d(t) := \deg(\calL) t + d + 1 -g.
\end{equation}
This is the Hilbert polynomial of a degree $d$ generalized line bundle.  We will show that the stability condition on a generalized
line bundle is controlled by a distinguished quotient.  Following \cite{green95},  we make the following definition.

\begin{definition}
	The sheaf $\bar{\calI}$ associated to a generalized line bundle $\calI$ is defined to be the \textbf{maximal torsion-free quotient} of $\calI \otimes \calO_{X_{\text{red}}}$.
\end{definition}

The sheaf $\bar{\calI}$ is a line bundle on $X_{\text{red}}$ of degree $(\deg(\calI) - b(\calI) )/2$.  By the projection formula, the Hilbert polynomial of
$i_{*}(\bar{\calI})$ with respect to $\calL$ is equal to the Hilbert polynomial of $\bar{\calI}$ with respect to $i^{*}(\calL)$.  This common polynomial is
\begin{equation}
	\frac{\deg(\calL)}{2} t + (\deg(\calI)-b(\calI))/2 +1-\bar{g}.
\end{equation}
Using $\bar{\calI}$ as a test quotient sheaf, we conclude that if $\calI$ is semi-stable, then its index must satisfy
\begin{equation} \label{Eqn: StabilityIneq}
	b(\calI) \le 1 + g - 2 \bar{g}.
\end{equation}

In fact, this inequality is sufficient to characterize slope semi-stability.
\begin{lemma}
	Let $(X,\calL)$ be a polarized ribbon and $\calI$ a generalized line bundle of degree $d$.  Then $\calI$ is slope semi-stable with respect to $\calL$ if and
only if Inequality~\eqref{Eqn: StabilityIneq} holds.  Similarly, $\calI$ is slope stable if and only if Inequality~\eqref{Eqn: StabilityIneq} is strict. 
\end{lemma}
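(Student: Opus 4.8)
The ``only if'' half for semi-stability is already established: the displayed computation immediately preceding the lemma shows that if $\calI$ is semi-stable then \eqref{Eqn: StabilityIneq} holds. So the plan is to prove the converse and, simultaneously, to determine exactly when $\calI$ is stable. The guiding observation is that a rank-$1$ torsion-free sheaf admits almost no pure one-dimensional quotients, so I would test (semi-)stability against quotients rather than subsheaves, using the quotient formulation recalled above.

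First I would classify all surjections $\calI \twoheadrightarrow \calQ$ with $\calQ$ pure of dimension $1$, by inspecting generic stalks. The ring $\calI_{\eta} \cong \calO_{X,\eta}$ is Artinian local of length $2$, and its only ideals are $0$, $\calN_{\eta}$ and the unit ideal; hence the nonzero module $\calQ_{\eta}$ is isomorphic to $\calO_{X,\eta}$ or to $\calO_{X,\eta}/\calN_{\eta}$. If $\calQ_{\eta} \cong \calO_{X,\eta}$, then $\ker(\calI \to \calQ)$ has trivial generic stalk and so vanishes, since $\calI$ is pure; thus $\calQ = \calI$. If instead $\calQ_{\eta} \cong \calO_{X,\eta}/\calN_{\eta}$, then $\calN\calQ$ has trivial generic stalk, hence vanishes because $\calQ$ is pure; so $\calQ = i_{*}\calM$ for a pure, hence invertible (as $X_{\red}$ is nonsingular), sheaf $\calM$ on $X_{\red}$. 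The surjection then factors through $\calI\otimes\calO_{X_{\red}}$ and, as $\calM$ is torsion-free, through the maximal torsion-free quotient $\bar{\calI}$; and a surjection $\bar{\calI}\twoheadrightarrow\calM$ of line bundles on a nonsingular curve is an isomorphism. Conclusion: the only pure one-dimensional quotients of $\calI$ are $\calI$ and $i_{*}\bar{\calI}$.

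Granting this, $\calI$ is slope semi-stable if and only if $\mu(\calI) \le \mu(i_{*}\bar{\calI})$, and slope stable if and only if the inequality is strict --- here one uses that $i_{*}\bar{\calI}$ is a \emph{proper} quotient (its generic stalk has length $1$), while the only quotient generically of length $2$ is $\calI$ itself. Feeding in the Hilbert polynomials already recorded, namely $\deg(\calL)\,t + d + 1 - g$ for $\calI$ and $\tfrac{\deg(\calL)}{2}\,t + (d - b(\calI))/2 + 1 - \bar{g}$ for $i_{*}\bar{\calI}$, the slope comparison $\mu(\calI)\le\mu(i_{*}\bar{\calI})$ simplifies to $b(\calI)\le 1 + g - 2\bar{g}$, and its strict form to $b(\calI) < 1 + g - 2\bar{g}$; this is the same manipulation performed just before the lemma, now read as an equivalence.

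The one genuinely substantive step is the quotient classification in the second paragraph --- specifically the reduction of a generically length-one quotient to $i_{*}\bar{\calI}$, which relies on $X_{\red}$ being nonsingular (so that purity forces local freeness there and surjections of line bundles are isomorphisms). Everything else is the slope bookkeeping that the paper has, in effect, already carried out.
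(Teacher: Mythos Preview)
Your proof is correct and follows essentially the same route as the paper's: both test semi-stability via quotients, split into the two cases according to whether the generic stalk of the quotient has length $2$ or $1$, show the length-$2$ case forces the quotient map to be an isomorphism, and in the length-$1$ case factor through $i_*\bar{\calI}$ and use that a surjection of line bundles on the nonsingular curve $X_{\red}$ is an isomorphism. Your treatment of the length-$2$ case is marginally more direct (you argue immediately from lengths that the kernel is generically zero, whereas the paper first invokes the classification of pure length-$2$ sheaves to identify $\calJ$ as a generalized line bundle), but this is a cosmetic difference rather than a genuinely different approach.
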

\begin{proof}
	Inequality~\eqref{Eqn: StabilityIneq} is equivalent to the slope inequality $\mu(\calI) \le \mu( \bar{\calI} )$, so one implication is clear.  For the converse,
we assume $\mu(\calI) \le \mu(\bar{\calI})$ and then prove $\mu(\calI) \le \mu(\calJ)$ for all pure quotients $q \colon \calI \twoheadrightarrow \calJ$ with $d(\calJ)=1$.  There are
two separate cases to consider: the case where the leading term of $P(\calJ, t)$ is $\deg(\calL)\cdot t$ and the case where it is $\deg(\calL)/2\cdot t$.

	First, suppose the leading term of $P(\calJ, t)$ is $\deg(\calL) \cdot t$.  We claim that $q$ is in fact an isomorphism, and thus there is no slope inequality to check.  We begin by showing that 
$\calJ$ is a generalized line bundle.  The condition on the Hilbert polynomial is equivalent to the condition 
that the generic stalk $\calJ_{\eta}$ has length $2$, so by Proposition~\ref{Prop: PureSh} it is enough to show that $\calJ$ is not the direct image of a
rank $2$ vector bundle on $X_{\text{red}}$.  This, however, is clear: $\calJ_{\eta}$ is generated by a single element (the image of a
local generator of $\calI$), but the direct image of a rank $2$ vector bundle does not have this property.  Having shown that $\calJ$ is a generalized
line bundle, the result follows easily.  Consider the kernel of $q$.  Because both $\calI$ and $\calJ$ are generalized line bundles, $q$ is generically an isomorphism, hence $\ker(q)$ is generically zero.  But $\ker(q)$ is a subsheaf of the pure sheaf $\calI$, so this forces $\ker(q)=0$.  We can conclude that
$q$ is an isomorphism.

	Next we consider the case where the leading term of $P(\calJ, t)$ is $\deg(\calL)/2 \cdot t$.  
To begin, we claim that the kernel $\calK$ of $\calO_{X} \to \ShEnd(\calJ)$ equals $\calN$.  The kernel is certainly contained in $\calN$ because $\calJ$ is pure.  
Furthermore, the only proper $\calO_X$-submodule of $\calN$ is the zero ideal, so it is enough to show that $\calK \ne 0$.  Consider the generic stalk $\calJ_{\eta}$. The assumption on the 
Hilbert polynomial implies that the length of $\calJ_{\eta}$, and hence of every non-zero cyclic submodule, is $1$.  In particular, if $s_0 \in \calJ_{\eta}$
is non-zero, then $\calO_{X, \eta} \cdot s_0 = \calJ_{\eta}/\operatorname{ann}(s_0)$ has length $1$.  Since $\calI$ has length $2$, we may conclude that $\calK_{\eta} = \operatorname{ann}(s_0)$
is non-zero.  This establishes the claim $\calK = \calN$.

Because $\calK = \calN$, we may factor $q$ as $\calI \to i_{*}(\bar{\calI}) \stackrel{\bar{q}}{\rightarrow} \calJ$.
Furthermore, consider $\calJ$ as a module over $\calO_{X}/\calN = \calO_{X_{\text{red}}}$.  If we write $\calF$ for this module, then
$\calF$ satisfies $\calJ = i_{*}(\calF)$.  This module is also a line bundle on $X_{\text{red}}$ because it is locally free (as it is pure) of
generic rank $1$.  Thus, $\bar{q}$ is a surjection between line bundles on $X_{\text{red}}$, so it must be an isomorphism.  In particular, the inequality $\mu(\calI) \le \mu(\calJ)$ is exactly the hypothesis of the lemma.  This completes the proof.
\end{proof}
The lemma also shows that the strictly semi-stable
generalized line bundles are exactly the generalized line bundles of index $b(\calI) = 1 + g - 2 \bar{g}$.  It is natural
to ask what their associated graded modules are.

\begin{lemma} \label{Lemma: FilterGLB}
	Let $(X, \calL)$ be a polarized ribbon and $\calI$ a generalized line bundle of index $b(\calI) = 1 + g - 2 \bar{g}$.  Set $\operatorname{F}_1(\calI)$
to be the kernel of $\calI \to \bar{\calI}$.  Then $0 \subset \operatorname{F}_1(\calI) \subset \calI$ is filtration whose successive quotients are stable
with the same Hilbert polynomial, and 
\begin{displaymath}
	\operatorname{Gr}(\calI) = \operatorname{F}_{1}(\calI) \oplus \bar{\calI}.
\end{displaymath}
\end{lemma}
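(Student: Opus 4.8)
The plan is to verify the two assertions of the lemma: that the successive quotients of $0 \subset \operatorname{F}_1(\calI) \subset \calI$ are both stable with the same Hilbert polynomial, and that $\operatorname{Gr}(\calI) \cong \operatorname{F}_1(\calI) \oplus \bar{\calI}$. The second quotient $\calI / \operatorname{F}_1(\calI)$ is by construction $\bar{\calI}$, which is a line bundle on $X_{\text{red}}$, hence stable (line bundles on a non-singular curve are stable), with Hilbert polynomial $\frac{\deg(\calL)}{2} t + (\deg(\calI) - b(\calI))/2 + 1 - \bar{g}$. So the content is to identify $\operatorname{F}_1(\calI)$ and show it too is stable with the same Hilbert polynomial. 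First I would observe that $\operatorname{F}_1(\calI)$, the kernel of $\calI \twoheadrightarrow \bar{\calI}$, is supported on a subscheme of dimension $1$ and is pure (a subsheaf of the pure sheaf $\calI$). Its Hilbert polynomial is $P(\calI, t) - P(\bar{\calI}, t) = P_d(t) - \left(\frac{\deg(\calL)}{2} t + (d - b(\calI))/2 + 1 - \bar{g}\right)$; using $b(\calI) = 1 + g - 2\bar{g}$ one checks by direct substitution that this equals $\frac{\deg(\calL)}{2} t + (d - b(\calI))/2 + 1 - \bar{g}$ as well, i.e. $\operatorname{F}_1(\calI)$ and $\bar{\calI}$ have the same Hilbert polynomial. (The slope equality $\mu(\operatorname{F}_1(\calI)) = \mu(\bar{\calI}) = \mu(\calI)$ is exactly the content of the borderline case of the previous lemma.)

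Next I would identify $\operatorname{F}_1(\calI)$ concretely. The short exact sequence $\calN \hookrightarrow \calO_X \twoheadrightarrow \calO_{X_{\text{red}}}$ tensored with $\calI$ shows that the image of $\calI \otimes \calN \to \calI$ is precisely the kernel of $\calI \to \calI \otimes \calO_{X_{\text{red}}}$, and composing with the maximal-torsion-free-quotient map to $\bar{\calI}$, one sees $\operatorname{F}_1(\calI)$ is the subsheaf of $\calI$ generated by $\calN \cdot \calI$ together with the torsion of $\calI \otimes \calO_{X_{\text{red}}}$. The cleaner route: multiplication by a local generator $\epsilon$ of $\calN$ gives, on the level of generic stalks, an $\calO_{X,\eta}$-module map $\calI_\eta \to \calI_\eta$ with image of length $1$ (since $\calI_\eta \cong \calO_{X,\eta}$ and $\epsilon \cdot \calO_{X,\eta}$ has length $1$); this image, killed by $\calN$, is an $\calO_{X_{\text{red}},\eta}$-module of rank $1$. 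So $\operatorname{F}_1(\calI)$ is generically rank $1$ over $\calO_{X_{\text{red}}}$. Since $\operatorname{F}_1(\calI)$ is pure of dimension $1$ and annihilated by $\calN$ — here I would note that $\calN^2 = 0$ forces $\calN \cdot \operatorname{F}_1(\calI) \subset \calN \cdot (\calN \cdot \calI) = 0$, combined with the fact that the torsion piece of $\calI \otimes \calO_{X_{\text{red}}}$ pulled into $\calI$ is also $\calN$-torsion — it is the pushforward of an $\calO_{X_{\text{red}}}$-module $\calG$ with $\operatorname{F}_1(\calI) = i_*\calG$. By the Auslander--Buchsbaum argument used in the proof of Proposition~\ref{Prop: PureSh}, $\calG$ is locally free, hence a line bundle on $X_{\text{red}}$ (generic rank $1$), and therefore stable. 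This gives the first assertion. I would double-check the degree bookkeeping: $\calG$ has degree $(d - b(\calI))/2$, matching the Hilbert polynomial computed above, which also re-confirms $\bar{\calI}$ and $\operatorname{F}_1(\calI)$ have equal Hilbert polynomials.

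For the statement about $\operatorname{Gr}(\calI)$: the filtration $0 \subset \operatorname{F}_1(\calI) \subset \calI$ has stable successive quotients $\operatorname{F}_1(\calI) = i_*\calG$ and $\bar{\calI}$ of equal slope, so by definition $\operatorname{Gr}(\calI) = i_*\calG \oplus \bar{\calI} = \operatorname{F}_1(\calI) \oplus \bar{\calI}$ — provided I have exhibited \emph{a} Jordan--Hölder filtration, which I have, and provided this is a valid such filtration, which follows since both quotients are stable of the common slope $\mu(\calI)$ and $\calI$ was shown semi-stable in the previous lemma. The main obstacle I anticipate is the middle step: pinning down that $\operatorname{F}_1(\calI)$ is exactly $i_*$ of a line bundle rather than merely a pure $\calN$-annihilated sheaf of the right Hilbert polynomial — in particular verifying that $\operatorname{F}_1(\calI)$ has no embedded (torsion) subsheaves supported at points and that its generic rank over $\calO_{X_{\text{red}}}$ is really $1$ and not $2$. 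Purity of $\operatorname{F}_1(\calI)$ as a subsheaf of $\calI$ handles the embedded-point issue, and the length count on the generic stalk (image of multiplication by $\epsilon$ has length exactly $1$ because $\calI_\eta \cong \calO_{X,\eta}$, using that $\calI$ is a \emph{generalized line bundle} of rank $1$, not the pushforward of a rank-$2$ bundle) handles the generic rank. Once those two points are secured, the rest is the degree arithmetic already sketched, which I would present compactly by equating Euler characteristics.
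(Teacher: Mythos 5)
Your proof is correct and takes essentially the same route as the paper: both graded pieces are pushforwards of line bundles on $X_{\red}$ (hence stable), and the borderline index $b(\calI)=1+g-2\bar{g}$ is exactly the condition making their slopes, equivalently Hilbert polynomials, coincide — you simply supply the identification of $\operatorname{F}_1(\calI)$ as $i_*$ of a line bundle, which the paper asserts parenthetically (and makes explicit afterwards as $\calN\otimes\bar{\calI}(D)$). One small repair: $\operatorname{F}_1(\calI)\not\subset\calN\cdot\calI$ in general, so justify $\calN\cdot\operatorname{F}_1(\calI)=0$ directly — if a local section $x$ has torsion image, say $s^m x\in\calN\calI$, then $s^m\epsilon x\in\calN^2\calI=0$, and torsion-freeness of $\calI$ forces $\epsilon x=0$.
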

\begin{proof}
	Both $\operatorname{F}_{1}(\calI)$ and $\bar{\calI}$ are pure of dimension $1$ (in fact, direct images of the line bundles on $X_{\text{red}}$), so it is enough to show
$\mu(\bar{\calI}) = \mu(\operatorname{F}_{1}(\calI))$.  This follows from the work we have already shown.  One computes 
\begin{gather*}
	\mu(\bar{\calI}) = (\deg(\calI) - b(\calI) + 2 - 2 \bar{g} )/\deg(\calL), \\
	\mu(\operatorname{F}_{1}(\calI)) = ( \deg(\calI) + b(\calI) + 2 \bar{g} - 2 g )/\deg(\calL). 
\end{gather*}
It is easy to check that these two numbers are equal precisely when $b(\calI) = 1 + g - 2 \bar{g}$.
\end{proof}
The submodule $\operatorname{F}_{1}(\calI)$ can be described more explicitly.  Say the blow-up 
$f \colon X' \to X$ associated to $\calI$ is given by blowing up the divisor $D \subset X_{\text{red}}$.
Then $\operatorname{F}_{1}(\calI) = \calN \otimes \bar{\calI}(D)$.  (This is \cite[pg.~759, bottom of page]{green95}).

$\operatorname{F}_{1}(\calI) \oplus \bar{\calI}$ is, of course, the direct image of a split rank $2$ vector bundle on $X_{\text{red}}$.    For the sake of completeness, we
should also discuss the slope stability condition on the direct image $i_{*}(\calE)$ of a vector bundle, but there is little to say.  Slope stability
of $i_{*}(\calE)$ is equivalent to slope stability of $\calE$.  This will be used in a later section, so let us record it as a fact.
\begin{fact}
	Let $X$ be a ribbon.  Then the direct image $i_* \calE$ of a rank $2$ vector bundle $\calE$  on $X_{\red}$ is semi-stable if and only
if for every subbundle $\calF \subset \calE$, we have
\begin{equation} \label{Eqn: StableVectorBundle}
	\deg(\calF)/\operatorname{rank}(\calF) \le \deg(\calE)/\operatorname{rank}(\calE).
\end{equation}
In particular, if $X$ is a rational ribbon (i.e. $X_{\red} \cong \bbP^1$), then there are no stable sheaves of this form
and the only semi-stable sheaves are $i_{*}(\calO(e) \oplus \calO(e))$.
\end{fact}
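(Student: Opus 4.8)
The plan is to reduce the Simpson-stability question for $i_*\calE$ on $X$ to the classical slope semi-stability of $\calE$ on the non-singular curve $X_{\red}$. The starting observation is that the nilradical $\calN$ annihilates $i_*\calE$, since $\calE$ is a module over $\calO_X/\calN = \calO_{X_{\red}}$. Consequently \emph{every} coherent subsheaf $\calG \subset i_*\calE$ is again annihilated by $\calN$, hence of the form $\calG = i_*\calG'$ for a unique subsheaf $\calG' \subset \calE$; moreover $\calG$ is pure of dimension $1$ precisely when $\calG'$ is a torsion-free $\calO_{X_{\red}}$-module, i.e.\ (as $X_{\red}$ is non-singular) locally free. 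First I would record this bijection between pure one-dimensional subsheaves of $i_*\calE$ and locally free subsheaves of $\calE$, together with the remark that $i_*\calE$ is itself pure because $\calE$ is torsion-free on a smooth curve and $i_*$ preserves the dimension of supports of all subsheaves.

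Next I would compare slopes. By the projection formula (already invoked in the text for $\bar{\calI}$), the Hilbert polynomial of $i_*\calG'$ with respect to $\calL$ equals that of $\calG'$ with respect to $i^*\calL$, so $\mu_\calL(i_*\calG') = \mu_{i^*\calL}(\calG')$. Writing out the Hilbert polynomial of a locally free sheaf $\calF$ of rank $r$ and degree $\delta$ on $X_{\red}$ with respect to a degree-$e$ polarization gives $P(\calF,t) = e r t + \delta + r(1-\bar g)$, whence $\mu_{i^*\calL}(\calF) = \delta/(er) + (1-\bar g)/e$; thus $\mu_{i^*\calL}(\calG') \le \mu_{i^*\calL}(\calE)$ is equivalent to $\deg(\calG')/\operatorname{rank}(\calG') \le \deg(\calE)/\operatorname{rank}(\calE)$. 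Combining with the first paragraph, $i_*\calE$ is slope semi-stable if and only if this inequality holds for every locally free subsheaf of $\calE$. Finally, replacing a locally free subsheaf by its saturation only adds a torsion quotient, hence only increases $\deg/\operatorname{rank}$, so it suffices to test saturated subsheaves, i.e.\ subbundles $\calF \subset \calE$. This yields the stated criterion.

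For the last assertion, suppose $X$ is a rational ribbon. Every vector bundle on $\bbP^1$ splits, so write $\calE \cong \calO(a)\oplus\calO(b)$ with $a \ge b$. The line subbundle $\calO(a)$ has $\deg/\operatorname{rank} = a \ge (a+b)/2 = \deg(\calE)/\operatorname{rank}(\calE)$, with equality if and only if $a = b$. Hence $i_*\calE$ is semi-stable exactly when $a = b =: e$, i.e.\ when $\calE \cong \calO(e)\oplus\calO(e)$; and in that case $\calO(e) \subset \calE$ realizes equality of slopes, so $i_*\calE$ is strictly semi-stable and never stable.

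The only point that requires genuine care is the reduction from arbitrary locally free subsheaves to subbundles, and dually the verification that pushing a subbundle of $\calE$ forward produces a \emph{pure} subsheaf of $i_*\calE$ rather than merely a subsheaf; both are immediate once one uses that non-singularity of $X_{\red}$ forces torsion-free coherent sheaves there to be locally free. Everything else is the projection-formula bookkeeping already carried out in the text for the quotient $\bar{\calI}$.
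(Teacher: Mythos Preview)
Your argument is correct. The paper itself does not prove this statement: it merely remarks that ``there is little to say'' and records the assertion as a Fact without proof. Your write-up supplies exactly the routine verification the authors omit---the bijection between pure subsheaves of $i_*\calE$ and locally free subsheaves of $\calE$ via annihilation by $\calN$, the projection-formula translation of slopes, and the saturation step reducing to subbundles---and the $\bbP^1$ consequence is handled correctly. One cosmetic point: the degree of $i^*\calL$ on $X_{\red}$ is $\deg(\calL)/2$ in the paper's conventions (cf.\ the displayed Hilbert polynomial of $i_*\bar\calI$ just above the Fact), so your ``$e$'' equals $\deg(\calL)/2$; this does not affect the argument since only the affine dependence of $\mu$ on $\deg/\operatorname{rank}$ matters.
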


We summarize the results of this section with the following definition and theorem.
\begin{definition}
	Let $X$ be a ribbon.  A coherent sheaf $\calI$ is said to be  \textbf{length $2$ and  semi-stable} if it is one of the following sheaves: 
	\begin{enumerate}
		\item a genefralized line bundle whose index satisfies Inequality~\eqref{Eqn: StabilityIneq}; 
		\item the direct image $i_{*}(\calE)$ of a semi-stable rank $2$ vector bundle $\calE$ on $X_{\text{red}}$.
	\end{enumerate}
	$\calI$ is said to be length $2$ and \textbf{stable} if it is one of the following sheaves: 
		\begin{enumerate}
			\item a generalized line bundle such that \eqref{Eqn: StabilityIneq} is a strict inequality;
			\item the direct image $i_{*}(\calE)$ of a stable rank $2$ vector bundle $\calE$ on $X_{\text{red}}$.
		\end{enumerate}
\end{definition}

\begin{theorem} \label{Thm: StabilityThm}
	Let $(X, \calL)$ be a polarized ribbon.  If $\calI$ is a pure sheaf with Hilbert polynomial $P_{d}(t) = \deg(\calL) t + d + 1 -g$, then $\calI$ is slope semi-stable (resp. stable)
	with respect to $\calL$ if and only if it is a length $2$, semi-stable (resp. stable) sheaf.
\end{theorem}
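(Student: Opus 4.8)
The plan is to reduce the theorem to the three classification and stability results already assembled in this section, namely Proposition~\ref{Prop: PureSh} (every pure sheaf with length-$2$ generic stalk is either a generalized line bundle or $i_*\calE$ for a rank $2$ bundle $\calE$ on $X_{\text{red}}$), the Lemma characterizing slope (semi-)stability of a generalized line bundle by Inequality~\eqref{Eqn: StabilityIneq}, and the Fact identifying semi-stability of $i_*\calE$ with semi-stability of $\calE$. The first step is to observe that the Hilbert polynomial $P_d(t) = \deg(\calL)t + d+1-g$ forces $d(\calI)=1$ and $a_0 = \deg(\calL)\cdot\operatorname{len}(\calI_\eta) = \deg(\calL)$, hence $\operatorname{len}(\calI_\eta)=1$. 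Wait --- one must be careful: $P_d$ has leading coefficient $\deg(\calL)$, which corresponds to $\operatorname{len}(\calI_\eta) = 1$, not $2$. So first I would recompute: the sheaves of interest have generic length $1$? No: a degree $d$ generalized line bundle is rank $1$, so $\operatorname{len}(\calI_\eta)$ as an $\calO_{X,\eta}$-module is $2$ since $\calO_{X,\eta}$ itself has length $2$ over $k(X_{\text{red}})$... The correct reading is that $a_0$ is computed with $\operatorname{len}$ measured over $\calO_{X,\eta}$, and $\calO_{X,\eta}$ has length $1$ over itself, so a rank $1$ torsion-free sheaf has $\operatorname{len}(\calI_\eta)=1$ and $a_0 = \deg(\calL)$, matching $P_d$. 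Thus the first step is simply: $P(\calI,t)=P_d(t)$ forces $d(\calI)=1$ and $\operatorname{len}(\calI_\eta)=1$, so $\calI_\eta$ is a length-$1$ module over $\calO_{X,\eta}$, i.e. $\calI_\eta \cong \calO_{X,\eta}$ (the only such module), which combined with purity (part of the semi-stability hypothesis, or assumed) makes $\calI$ a generalized line bundle directly --- so Proposition~\ref{Prop: PureSh} would only be needed in a slightly broader framing.

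Let me reststructure. Given the stated hypothesis that $\calI$ is pure with Hilbert polynomial $P_d$, the generic stalk has length $1$ over $\calO_{X,\eta}$ hence is $\cong \calO_{X,\eta}$, so by Lemma~\ref{Lemma: PureIsTorsionFree} $\calI$ is torsion-free, and being rank $1$ it is a generalized line bundle. Then the main Lemma of this section (characterization via Inequality~\eqref{Eqn: StabilityIneq}) finishes it: $\calI$ is slope semi-stable (resp. stable) iff $b(\calI) \le 1+g-2\bar g$ (resp. $<$), which is precisely the definition of ``length $2$ and semi-stable (resp. stable)'' in case (1). But then the $i_*\calE$ case in the definition appears to have no counterpart --- this means the statement of the theorem is implicitly identifying $\operatorname{M}(\calO_X,P_d)$-points coming from rank $2$ bundles on $X_{\text{red}}$ via a degree shift (the direct image $i_*\calE$ of a degree $d+2\bar g -1 -g$ bundle has Hilbert polynomial $P_d$ only after one accounts for $\deg\calN$), so the second step is to verify that such $i_*\calE$ indeed has Hilbert polynomial $P_d$ and generic length... no, $i_*\calE$ has generic length $2$ as an $\calO_{X,\eta}$-module only if... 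Actually $\calE_\eta$ is a $2$-dimensional $k(X_{\text{red}})$-vector space and $\calO_{X,\eta} \twoheadrightarrow k(X_{\text{red}})$, so $i_*\calE$ has generic length $2$ over $\calO_{X,\eta}$, giving $a_0 = 2\deg(i^*\calL)$; matching $\deg(\calL)$ requires $\deg(i^*\calL) = \deg(\calL)/2$, i.e. $\deg(\calL|_{X_{\text{red}}})$... by the conormal sequence $\deg(\calL) = 2\deg(i^*\calL) + \deg(\calN)$, hmm. So there is genuine bookkeeping: the theorem as literally stated (with ``length $2$'' in the definition) must be interpreting things so that both cases have the right $P_d$, and the honest proof handles both.

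So the real plan: \emph{Step 1.} If $\calI$ is pure with $P(\calI,t) = P_d(t)$, compute from the leading-coefficient formulas that $\operatorname{len}(\calI_\eta) \in \{1,2\}$ depending on whether the generic point sees $\calN$ or not, and split into the two cases of Proposition~\ref{Prop: PureSh} accordingly --- in the generalized-line-bundle case $\operatorname{len}=?$, in the $i_*\calE$ case $\operatorname{len} = 2$; I would pin down which normalization of $P_d$ is in force by re-deriving $\chi(\calI\otimes\calL^{\otimes n})$ for a degree $d$ line bundle on $X$ using the convention $\deg(\calI) = \chi(\calI)-\chi(\calO_X)$ and Riemann--Roch, getting exactly $\deg(\calL)n + d + 1 - g$, which confirms generic length $= 1$ over $\calO_{X,\eta}$ for the line-bundle normalization, and then separately check that $i_*\calE$ with $\deg\calE = d + 2\bar g -1 - g$ has the same $\chi(\cdot\otimes\calL^{\otimes n})$ via the projection formula and $\deg(i^*\calL^{\otimes n}) = n\deg(\calL)/2$... \emph{Step 2.} Apply Proposition~\ref{Prop: PureSh} to see $\calI$ is a generalized line bundle or $i_*\calE$. \emph{Step 3.} In the generalized line bundle case, invoke the Lemma characterizing (semi-)stability by Inequality~\eqref{Eqn: StabilityIneq}; this matches the definition's case (1). \emph{Step 4.} In the $i_*\calE$ case, invoke the Fact that (semi-)stability of $i_*\calE$ is equivalent to that of $\calE$; this matches case (2). \emph{Step 5.} Conclude. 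The main obstacle is Step 1: disentangling the degree/length conventions so that the single Hilbert polynomial $P_d$ genuinely captures both families simultaneously, and making sure the ``$d$'' in $P_d$ is consistently the degree in the sense of the paper's convention --- once that is pinned down, Steps 2--5 are immediate citations. I would write it as:

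\begin{proof}
	Suppose $\calI$ is pure with Hilbert polynomial $P_d(t)=\deg(\calL)t + d+1-g$.  Writing $P(\calI,t) = a_0 t/1! + a_1$, the formula $a_0 = \deg(\calL)\operatorname{len}(\calI_\eta)$ shows $\operatorname{len}(\calI_\eta)=1$, so $\calI_\eta \cong \calO_{X,\eta}$, the unique $\calO_{X,\eta}$-module of length $1$.  Hence $\calI$ is rank $1$ and, by Lemma~\ref{Lemma:  PureIsTorsionFree}, torsion-free; that is, $\calI$ is a generalized line bundle.  By the Lemma characterizing slope semi-stability of generalized line bundles, $\calI$ is slope semi-stable (resp. slope stable) with respect to $\calL$ if and only if Inequality~\eqref{Eqn: StabilityIneq} holds (resp. holds strictly), which is exactly the condition that $\calI$ be a length $2$, semi-stable (resp. stable) sheaf of type~(1).

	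Conversely, a generalized line bundle satisfying Inequality~\eqref{Eqn: StabilityIneq} has Hilbert polynomial $P_d$, where $d = \deg(\calI)$, and is slope semi-stable by the same Lemma; a direct image $i_*(\calE)$ of a semi-stable rank $2$ vector bundle $\calE$ on $X_{\text{red}}$ of the appropriate degree likewise has Hilbert polynomial $P_d$ and is slope semi-stable by the Fact comparing stability of $i_*(\calE)$ with that of $\calE$.  This proves that every length $2$, semi-stable (resp. stable) sheaf with Hilbert polynomial $P_d$ is slope semi-stable (resp. stable), and the theorem follows.
\end{proof}
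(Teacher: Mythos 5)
Your overall plan (reduce to Proposition~\ref{Prop: PureSh}, the lemma characterizing (semi-)stability of a generalized line bundle via Inequality~\eqref{Eqn: StabilityIneq}, and the fact about $i_*\calE$) is the right one and is exactly how the paper intends the theorem to be assembled. But the written proof resolves the length/degree bookkeeping the wrong way, and this breaks the forward direction. The ring $\calO_{X,\eta}$ is Artinian local with nonzero square-zero maximal ideal $\calN_\eta$, so it has length $2$ over itself, not $1$; the unique $\calO_{X,\eta}$-module of length $1$ is the residue field $k(X_{\red})$, not $\calO_{X,\eta}$. Your sentence ``$\operatorname{len}(\calI_\eta)=1$, so $\calI_\eta\cong\calO_{X,\eta}$'' is therefore internally inconsistent, and the conclusion drawn from it --- that every pure sheaf with Hilbert polynomial $P_d$ is a generalized line bundle --- is false. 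The correct accounting is: $\deg(\calL)=\chi(\calL)-\chi(\calO_X)=2\deg_{X_{\red}}(i^*\calL)$ (tensor the sequence \eqref{Eq: Conormal} with $\calL$), so a pure sheaf whose Hilbert polynomial has leading coefficient $\deg(\calL)$ has generic stalk of length $2$ over $\calO_{X,\eta}$ --- this is what the name ``length $2$'' in the definition refers to --- whereas a sheaf with generic length $1$ is $i_*$ of a line bundle on $X_{\red}$ and has leading coefficient $\deg(\calL)/2$, hence does not have Hilbert polynomial $P_d$. With generic length $2$ established, Proposition~\ref{Prop: PureSh} gives the genuine dichotomy: $\calI$ is a generalized line bundle \emph{or} $\calI\cong i_*\calE$ with $\calE$ of rank $2$ on $X_{\red}$.

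Because your forward direction never reaches the second branch, it cannot show that a slope semi-stable pure sheaf of the form $i_*\calE$ with Hilbert polynomial $P_d$ is length $2$ semi-stable of type (2) --- indeed your argument would misclassify such a sheaf as a generalized line bundle, which it is not (its generic stalk is not cyclic). This case is half the theorem: Theorem~\ref{Thm: Main I} explicitly lists these sheaves, and they do occur (e.g.\ $\operatorname{Gr}$ of a strictly semi-stable generalized line bundle is such a direct image). To repair the proof, after invoking Proposition~\ref{Prop: PureSh}, handle the two cases separately: in the generalized line bundle case, the Euler characteristic gives $\deg(\calI)=d$ and the lemma on Inequality~\eqref{Eqn: StabilityIneq} gives the equivalence with case (1) of the definition; in the case $\calI\cong i_*\calE$, the projection formula and $\chi(\calE)=d+1-g$ give $\deg(\calE)=d+2\bar g-1-g$, and the stated fact identifies slope (semi-)stability of $i_*\calE$ with that of $\calE$, matching case (2). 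Your converse direction is essentially fine once the same degree bookkeeping is inserted, and this corrected two-case argument is precisely the paper's (implicit) proof, since the theorem is stated there as a summary of the section's results.
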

The special case of rational ribbons is particularly nice.
\begin{corollary} \label{Cor: RatlStability}
	Let $(X, \calL)$ be a polarized rational ribbon.  If $\calI$ is a slope semi-stable sheaf with Hilbert polynomial $P_{d}(t)$, then it is one of the following sheaves: 
	\begin{enumerate}	
		\item  a degree $d$ generalized line bundle  whose index satisfies $b(\calI) \le 1+g$; 
		\item the direct image $i_{*}(\calE)$, where $\calE = \calO( (d-1-g)/2 ) \oplus \calO( (d-1-g)/2 )$. 
	\end{enumerate} 
	The second case can only occur when $d-1-g$ is even.

	If $\calI$ is strictly semi-stable, then it is one of the following sheaves: 
	\begin{enumerate}
		\item a degree $d$ generalized line bundle whose index satisfies $b(\calI)=g+1$;
		\item the direct image $i_{*}(\calE)$, where $\calE = \calO( (d-1-g)/2 ) \oplus \calO( (d-1-g)/2 )$.
	\end{enumerate}
	In particular, any two strictly semi-stable sheaves are $\operatorname{Gr}$-equivalent, and if
	$d-1-g$ is odd, then slope semi-stability is equivalent to slope stability.
\end{corollary}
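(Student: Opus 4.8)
The plan is to assemble this corollary from the results already established in this section, essentially by specializing the genus hypotheses of a rational ribbon ($\bar g = 0$) and invoking Theorem~\ref{Thm: StabilityThm} together with the classification of semi-stable vector bundles on $\bbP^1$. First I would observe that a rational ribbon has $X_{\red} \cong \bbP^1$, so $\bar g = 0$, and hence Inequality~\eqref{Eqn: StabilityIneq} reads $b(\calI) \le 1 + g$ for a generalized line bundle, with equality characterizing the strictly semi-stable case (as noted immediately after the proof of the lemma comparing \eqref{Eqn: StabilityIneq} with slope semi-stability). For the second type of sheaf, Theorem~\ref{Thm: StabilityThm} says a pure sheaf with Hilbert polynomial $P_d$ that is not a generalized line bundle is the direct image $i_*(\calE)$ of a semi-stable rank $2$ vector bundle $\calE$ on $\bbP^1$; the Fact just before the summary records that on $\bbP^1$ every semi-stable rank $2$ bundle is $\calO(e) \oplus \calO(e)$, and there are no stable ones. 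It then remains only to pin down $e$: by the degree bookkeeping in Theorem~\ref{Thm: Main I} (the direct image case), $\deg(\calE) = d + 2\bar g - 1 - g = d - 1 - g$, so $2e = d - 1 - g$, forcing $e = (d-1-g)/2$ and, in particular, forcing $d - 1 - g$ to be even for this case to occur at all.

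Next I would dispose of the strictly semi-stable statement. A strictly semi-stable generalized line bundle is, by the remark following the stability lemma, exactly one with $b(\calI) = 1 + g - 2\bar g = g + 1$; this gives case (1). A strictly semi-stable sheaf of the second type is $i_*(\calE)$ with $\calE$ semi-stable but not stable — but on $\bbP^1$ there are no stable rank $2$ bundles, so every $i_*(\calO(e)\oplus\calO(e))$ that is semi-stable is automatically strictly so, giving case (2) with $e$ as above. For the $\operatorname{Gr}$-equivalence claim, I would use Lemma~\ref{Lemma: FilterGLB}: a strictly semi-stable generalized line bundle $\calI$ has $\operatorname{Gr}(\calI) = \operatorname{F}_1(\calI) \oplus \bar\calI$, and the explicit description $\operatorname{F}_1(\calI) = \calN \otimes \bar\calI(D)$ together with $\bar g = 0$ (so that line bundles on $\bbP^1$ are rigid up to degree) should show that both summands have the forced degrees, hence $\operatorname{Gr}(\calI) \cong i_*(\calO(e)\oplus\calO(e))$ as well; thus all strictly semi-stable sheaves — of either type — share the same associated graded and are $\operatorname{Gr}$-equivalent.

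Finally, the parity dichotomy: Fact~\ref{Fact: TypeRestrict} gives that $\deg(\calI) - b(\calI) = d - b(\calI)$ is even for any generalized line bundle, so $b(\calI) \equiv d \pmod 2$. If $d - 1 - g$ is odd, i.e. $d \not\equiv g+1 \pmod 2$, then the strictly semi-stable value $b(\calI) = g+1$ is incompatible with $b(\calI) \equiv d$, so no generalized line bundle is strictly semi-stable; and the direct-image case also cannot occur since $2e = d-1-g$ has no integer solution. Hence in the odd case there are no strictly semi-stable sheaves at all, which is precisely the statement that slope semi-stability coincides with slope stability.

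I expect the only mildly delicate point to be the $\operatorname{Gr}$-equivalence assertion: one must check that the two line bundle summands of $\operatorname{Gr}(\calI)$ for a strictly semi-stable generalized line bundle really do land in degree $(d-1-g)/2$ each, which is where the computations of $\mu(\bar\calI)$ and $\mu(\operatorname{F}_1(\calI))$ in the proof of Lemma~\ref{Lemma: FilterGLB}, specialized to $\bar g = 0$, do the work — everything else is a direct substitution of $\bar g = 0$ into statements already proved.
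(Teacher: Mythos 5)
Your proposal is correct and follows essentially the same route as the paper, which states the corollary as an immediate specialization of Theorem~\ref{Thm: StabilityThm} to $\bar g=0$, combined with the Fact that the only semi-stable rank~$2$ bundles on $\bbP^1$ are $\calO(e)\oplus\calO(e)$ (with $2e=d-1-g$ forced by the Euler characteristic), Fact~\ref{Fact: TypeRestrict} for the parity restriction, and Lemma~\ref{Lemma: FilterGLB} for the $\operatorname{Gr}$-equivalence of the strictly semi-stable sheaves. Your check that both summands $\operatorname{F}_1(\calI)$ and $\bar\calI$ have degree $(d-1-g)/2$ is exactly the point the slope computation in Lemma~\ref{Lemma: FilterGLB} settles, so nothing is missing.
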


\section{Moduli of Sheaves on a Ribbon} \label{Sect: Simpson}
Semi-stable generalized line bundles were described in Section~\ref{Sect: StableSh}. The significance of these sheaves is that
they are coarsely parameterized by a moduli space.  Simpson \cite{simpson} has constructed the coarse moduli
space of semi-stable sheaves on an arbitrary polarized projective $k$-scheme, in particular on a ribbon.  In his paper, Simpson works over the complex numbers, but his work has since been generalized to positive characteristic (\cite[Thm.~0.6]{maruyama} or \cite[Thm.~0.2]{langer}).  Here we describe the geometry of the Simpson moduli space.  As before, $X$ will be a ribbon of genus $g$ with nilradical $\calN$.  We write $\bar{g}$ for the genus of $X_{\text{red}}$.  

On a ribbon, we have seen that the slope stability condition is independent of the choice of polarization. But to fix ideas, let us temporarily work with a polarized ribbon $(X, \calL)$.  Given a  polynomial $P(t)$, 
the \textbf{Simpson moduli functor} $\operatorname{M}^{\sharp}(\calO_{X}, P) \colon \text{$k$-Sch} \to \text{Sets}$
is defined by letting the set of $T$-valued points equal the set of isomorphism classes of $\calO_{T}$-flat, finitely presented $\calO_{X_T}$-modules that are fiber-wise
slope semi-stable with Hilbert polynomial $P(t)$.  Inside of the Simpson moduli functor, we can consider the subfunctor $\operatorname{M}^{\sharp}_{\text{s}}(\calO_{X}, P)$
that parameterizes slope stable sheaves.  

The basic existence theorem \cite[Thm.~1.21]{simpson} states that there exists a pair $(\operatorname{M}(\calO_{X}, P), p)$ consisting of a projective scheme $\operatorname{M}(\calO_{X}, P)$ and a natural transformation $p \colon \operatorname{M}^{\sharp}(\calO_{X}, P) \to \operatorname{M}(\calO_{X}, P)$ that universally co-represents
$\operatorname{M}^{\sharp}(\calO_{X}, P)$.  In other words, $p$ is universal with respect to natural transformations from $\operatorname{M}^{\sharp}(\calO_{X}, P)$
to a  $k$-scheme and this property persists under base change by an arbitrary morphism $T \to \operatorname{M}(\calO_{X}, P)$.  Furthermore, $p$ induces a bijection between the $k$-valued 
points of $\operatorname{M}(\calO_{X}, P)$ and the set of $\operatorname{Gr}$-equivalence classes of semi-stable sheaves on $X$ with Hilbert polynomial $P(t)$.  We call $\operatorname{M}(\calO_{X}, P)$ the \textbf{Simpson moduli space}.

We can obtain stronger results by restricting to stable sheaves.  There is an open subscheme $\operatorname{M}_{\text{s}}(\calO_{X}, P)$ of $\operatorname{M}(\calO_{X}, P)$
 whose pre-image under $p$ is $\operatorname{M}^{\sharp}_{\text{s}}(\calO_{X}, P)$. The restriction $\operatorname{M}^{\sharp}_{\text{s}}(\calO_{X}, P) \to 
\operatorname{M}_{\text{s}}(\calO_{X}, P)$ realizes $\operatorname{M}_{\text{s}}(\calO_{X}, P)$ as the scheme that represents the \'{e}tale
sheaf associated to $\operatorname{M}_{\text{s}}(\calO_{X}, P)$. (This is a restatement of \cite[Thm.~1.21(4)]{simpson}.)  The scheme $\operatorname{M}_{\text{s}}(\calO_{X}, P)$
is called the \textbf{stable Simpson moduli space}.

If we specialize to the case where $P(t)$ equals $P_d(t) := \deg(\calL) t + d + 1 - g$, the Simpson moduli space can be described using the results from Section~\ref{Sect: StableSh}: it is the coarse moduli space of length $2$, semi-stable sheaves of degree $d$. The stable locus is the fine moduli space parameterizing those sheaves that are stable.  
In \cite{green95}, Eisenbud and Green asked if it is possible to compactify the Jacobian of a rational ribbon by a moduli space parameterizing generalized line 
bundles of index at most $g$. The Simpson moduli space is a natural candidate for such a compactification;  perhaps surprisingly it has the desired properties precisely when $g$ is even.
\begin{corollary}[Reformulation of Corollary~\ref{Cor: RatlStability}] \label{Cor: SimpSpaceRatlRibb}
	Let $X$ be a rational ribbon.  Then the stable Simpson moduli space $\operatorname{M}_{\s}(\calO_{X}, P_0)$ parameterizes generalized
	line bundles of degree $0$ and index at most $g$.  If $g$ is even, then $\operatorname{M}_{\s}(\calO_{X}, P_0)$ is projective.
	Otherwise, the complement of the stable locus in $\operatorname{M}(\calO_{X}, P_0)$ consists of a single point that
	represents the $\operatorname{Gr}$-equivalence class of the polystable sheaf
	\begin{displaymath}
		i_{*}( \calO( (-1-g)/2) \oplus (-1-g)/2)).
	\end{displaymath}
	This equivalence class consists of the above sheaf and every generalized line bundle of degree $0$ and index $g+1$.
\end{corollary}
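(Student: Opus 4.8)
The plan is to deduce everything from Corollary~\ref{Cor: RatlStability} (equivalently Theorem~\ref{Thm: StabilityThm}) specialized to $\bar{g}=0$ and $d=0$, together with the parity constraint of Fact~\ref{Fact: TypeRestrict}. First I would record that for a rational ribbon the bound \eqref{Eqn: StabilityIneq} reads $b(\calI) \le g+1$, so that a degree $0$ generalized line bundle is slope stable exactly when $b(\calI) \le g$. Since on $\bbP^1$ every rank $2$ bundle splits and a split bundle is never slope stable, the vector-bundle stratum contributes nothing to the stable locus; hence the $k$-points of $\operatorname{M}_{\s}(\calO_{X},P_0)$ are precisely the generalized line bundles of degree $0$ and index at most $g$, and these form a fine moduli space by Simpson's theorem.

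Next I would split into the two parity cases. If $g$ is even, then on the one hand $d+2\bar{g}-1-g = -1-g$ is odd, so there is no semi-stable $i_{*}(\calE)$ at all (a semi-stable rank $2$ bundle on $\bbP^1$ is $\calO(a)\oplus\calO(a)$, of even degree); on the other hand, a strictly semi-stable generalized line bundle of degree $0$ would have $b(\calI)=g+1$, which is odd, contradicting Fact~\ref{Fact: TypeRestrict} (which forces $0-b(\calI)$ to be even). Therefore every semi-stable sheaf with Hilbert polynomial $P_0$ is stable, $\operatorname{M}_{\s}(\calO_{X},P_0)=\operatorname{M}(\calO_{X},P_0)$, and the latter is projective.

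If $g$ is odd, then $g+1$ is even and $-1-g=2\cdot\tfrac{-1-g}{2}$, so both types of strictly semi-stable sheaf occur: the generalized line bundles of degree $0$ and index $g+1$, and the unique semi-stable rank $2$ bundle $i_{*}(\calO(\tfrac{-1-g}{2})\oplus\calO(\tfrac{-1-g}{2}))$. The complement of the stable locus in $\operatorname{M}(\calO_{X},P_0)$ is exactly the set of $\operatorname{Gr}$-equivalence classes of strictly semi-stable sheaves, and by the last assertion of Corollary~\ref{Cor: RatlStability} these all lie in a single class, so this complement is one point. To identify it with the claimed point I would invoke Lemma~\ref{Lemma: FilterGLB}: for such an $\calI$ we have $\operatorname{Gr}(\calI)=\operatorname{F}_1(\calI)\oplus\bar{\calI}$, where $\bar{\calI}$ is a line bundle on $\bbP^1$ of degree $(0-(g+1))/2=\tfrac{-1-g}{2}$ and $\operatorname{F}_1(\calI)$ has the same slope, hence the same degree; both are therefore $\calO(\tfrac{-1-g}{2})$, giving $\operatorname{Gr}(\calI)\cong i_{*}(\calO(\tfrac{-1-g}{2})^{\oplus 2})$. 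The polystable bundle is its own associated graded, so it represents the same point, and the final description of the equivalence class is just the list of sheaves having this associated graded, which is Corollary~\ref{Cor: RatlStability} together with the computation just made.

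I expect no genuine obstacle here, since the corollary is a repackaging of results already proved; the only point requiring care is the parity bookkeeping — in particular remembering to use Fact~\ref{Fact: TypeRestrict} to rule out index-$(g+1)$ generalized line bundles when $g$ is even, which is what makes the "precisely when $g$ is even" dichotomy work.
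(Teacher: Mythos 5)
Your proposal is correct and follows essentially the route the paper intends: the corollary is presented there as a direct reformulation of Corollary~\ref{Cor: RatlStability} (with no separate proof given), and your argument supplies exactly the underlying bookkeeping — the strict inequality $b(\calI)\le g$ for stability, the parity exclusion via Fact~\ref{Fact: TypeRestrict} when $g$ is even, and the identification of the unique strictly semi-stable $\operatorname{Gr}$-class via Lemma~\ref{Lemma: FilterGLB} when $g$ is odd.
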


We now turn our attention to describing the global and local geometry of the Simpson moduli space.

\subsection{Global Structure}
Having defined $\operatorname{M}(\calO_{X}, P_d)$, we study the global geometry of this space.  
First, we show that it is impossible for a rank $2$ vector bundle on $X_{\text{red}}$ to specialize to a generalized line bundle on $X$.

\begin{lemma} \label{Lemma:  GLBIsOpen}
	Suppose $T$ is a $k$-scheme and $\calI$ a sheaf that represents a $T$-valued point of $\operatorname{M}^{\sharp}(\calO_{X}, P)$.  Define $T_0 \subset T$ 
to be the locus of points $t \in T$ with the property that the restriction of $\calI$ to the fiber $X_{t} := X_{T} \times_{T} \Spec(k(t))$ is a generalized line bundle.  Then
$T_0 \subset T$ is open.
\end{lemma}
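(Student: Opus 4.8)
The plan is to prove that the complementary locus $T_{1} := T \setminus T_{0}$ is closed. Since $\calI$ is $\calO_{T}$-flat, all the fibres $\calI_{t}$ have the same Hilbert polynomial, and in particular the same length $\ell$ of the generic stalk (the length over the artinian local ring at the generic point of $X_{t}$); if $\ell \neq 2$ then no $\calI_{t}$ is a generalized line bundle, so $T_{0} = \emptyset$ and there is nothing to prove. Assume therefore $\ell = 2$. The fibres $\calI_{t}$ are pure (being slope semi-stable), so by Proposition~\ref{Prop: PureSh}, applied fibrewise to the ribbon $X_{t}$ (its proof does not use that the base field is algebraically closed), each $\calI_{t}$ is either a generalized line bundle or the direct image $i_{*}\calE_{t}$ of a rank $2$ vector bundle $\calE_{t}$ on the reduced fibre $X_{\red} \times_{k} k(t)$. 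Thus $T = T_{0} \sqcup T_{1}$, with $T_{1}$ the locus of the second type. The quantity separating the two cases is the generic rank, along the fibre, of the restriction of $\calI_{t}$ to the reduced curve: restricting a generalized line bundle yields a sheaf of generic rank $1$, while $i_{*}\calE_{t}$ restricts to $\calE_{t}$, of generic rank $2$.

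Accordingly, I would form the pullback $\calJ := \iota^{*}\calI$ of $\calI$ along the closed immersion $\iota \colon X_{\red} \times_{k} T \hookrightarrow X_{T}$. This is a finitely presented sheaf on $X_{\red} \times_{k} T$, and, being a pullback, its formation commutes with any base change on $T$; in particular its fibre over $t$ is the restriction of $\calI_{t}$ to $X_{\red} \times_{k} k(t)$. A one-line computation at the generic point of each fibre (using $\ell = 2$ and the classification above) shows that this fibre has generic rank $1$ exactly when $t \in T_{0}$ and generic rank $2$ exactly when $t \in T_{1}$. Now I would invoke Fitting ideals: the first Fitting ideal sheaf $\operatorname{Fitt}_{1}(\calJ)$ is defined because $\calJ$ is finitely presented, its formation commutes with arbitrary base change, and on the integral curve $X_{\red} \times_{k} k(t)$ its zero locus is the whole curve precisely when the generic rank of the fibre of $\calJ$ is at least $2$. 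Hence $T_{1} = \{\, t \in T : X_{\red} \times_{k} k(t) \subseteq V(\operatorname{Fitt}_{1}(\calJ)) \,\}$, so $T_{0}$ is the image of the open subset $(X_{\red} \times_{k} T) \setminus V(\operatorname{Fitt}_{1}(\calJ))$ under the projection $\operatorname{pr} \colon X_{\red} \times_{k} T \to T$. Since $X_{\red}$ is smooth over $k$, $\operatorname{pr}$ is (universally) open, so $T_{0}$ is open, as desired.

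The only genuine obstacle is that $\calJ$ is \emph{not} flat over $T$: its fibres drop in generic rank from $2$ to $1$ exactly along $T_{0}$, so semicontinuity statements for flat families do not apply to $\calJ$ directly. One could instead try to compare the Hilbert polynomials $P(\calJ_{t},\cdot)$ and appeal to upper semicontinuity of $h^{0}$, but making that clean requires a uniform Castelnuovo--Mumford regularity bound for the fibres of $\calJ$, hence Noetherian or finite-type hypotheses on $T$ and some extra bookkeeping. Routing the argument through $\operatorname{Fitt}_{1}$ sidesteps all of this: Fitting ideals make sense over an arbitrary base, commute with base change, and convert ``generic rank $\geq 2$'' into the closed condition ``this fixed closed subscheme of $X_{\red} \times_{k} T$ contains the fibre over $t$'', which transfers to a closed condition on $T$ by openness of $\operatorname{pr}$. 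The remaining points — that $X_{\red} \times_{k} k(t)$ is integral, so ``contains the generic point'' is the same as ``is the whole curve'', and the fibrewise generic-rank computation — are immediate, since $X_{\red}$ is a geometrically integral smooth curve and $i_{*}$ is exact.
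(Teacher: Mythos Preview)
Your proof is correct and takes a genuinely different route from the paper's. The paper characterizes the generalized-line-bundle locus as the set of $t$ where a local generator $\epsilon$ of the nilradical acts nontrivially on the fibre $\calI_{t}$, and then runs the standard EGA two-step: reduce to a noetherian base by a finite-presentation argument, cite a constructibility result, and check that the complement is stable under specialization via a DVR computation (flatness makes $\calI_{U}$ inject into its generic fibre, so trivial $\epsilon$-action there propagates to the special fibre). You instead repackage the dichotomy as a condition on the generic rank of $\iota^{*}\calI$ along the reduced fibre and read it off from the first Fitting ideal together with openness of the smooth projection $X_{\red}\times_{k} T \to T$. Your route is cleaner in one respect: Fitting ideals commute with arbitrary base change and need no noetherian hypotheses, so you avoid the finite-presentation reduction and the constructibility/specialization argument entirely. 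The paper's route, on the other hand, is more transparent about the mechanism---it exhibits directly the closed condition ``$\epsilon$ annihilates the fibre'' and checks it specializes---whereas yours passes through an auxiliary sheaf on a different space. One minor remark: your fibrewise appeal to Proposition~\ref{Prop: PureSh} over non-closed residue fields is not really needed. The only fact you use is that the generic rank of $\iota^{*}\calI_{t}$ is $1$ when $\calI_{t}$ is a generalized line bundle and $2$ otherwise, and this is immediate from Nakayama at the generic point $\eta$ of $X_{t}$: the length-$2$ module $\calI_{t,\eta}$ over the length-$2$ local ring $\calO_{X_{t},\eta}$ is cyclic (hence free of rank $1$) exactly when its fibre modulo $\calN_{\eta}$ is $1$-dimensional.
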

\begin{proof}
	The property of being a generalized line bundle is equivalent to the property that $\calN$ acts non-trivially.  A standard 
argument  \cite[12.0.2]{ega43} lets us reduce to showing that the property in question is stable under generalization, which
can be seen directly.  We now make this sketch precise.  

	Fix a non-empty, open affine subset $U \subset X$ with the property that the restriction of the nilradical is 
generated by a single element $\epsilon$ and set $\calI_U := \calI|_{U \times T}$.   Using the classification of length $2$ 
semi-stable sheaves (Proposition~\ref{Prop: PureSh}), we may assert that $T_0$ is the locus of points $t$ with the 
property that the restriction of $\epsilon \cdot \underline{\phantom{s}} \colon \calI_U \to \calI_U$ to $X_t$ is non-zero. 

A finite presentation argument (\cite[8.9.1, 11.2.6]{ega43}) allows us to reduce to the 
case that $T$ is an affine, noetherian scheme.  Having made this reduction, we can cite \cite[Cor.~9.4.6]{ega43}
to assert that $T_0$ is constructible.  To complete the proof, it is enough to show that the complement of 
$T_0$ is stable under specialization. 

Thus, let us assume that $T = \Spec(A)$ is the spectrum of a discrete valuation ring with uniformizer $\pi$ and
that $\epsilon$ acts trivially on the generic fiber of $\calI_U$.  By flatness, we may deduce that $\epsilon$ acts
trivially on $\calI_U$ (because $\calI_U$ injects into its generic fiber) and this property persists upon passing to the 
special fiber. In other words, the complement of $T_0$ is closed under specialization, hence
the proof is complete.
\end{proof}

Next, we compute the dimension of various loci of generalized line bundles in $\operatorname{M}(\calO_{X}, P)$.
\begin{definition} \label{Def: FixedTypeLocus}
	Let $X$ be a ribbon of genus $g$ and $\underline{b} := (b_1, \dots, b_k)$ a (possibly empty) sequence of positive integers satisfying $b_1 + \cdots + b_k \le g - 2 \bar{g}$.
	Define $Z_{\underline{b}} \subset \operatorname{M}_{\text{s}}(\calO_{X}, P_d)$ to be the subset of stable generalized line bundles whose local index sequence
equals $\underline{b}$.  
\end{definition}

\begin{lemma} \label{Lemma:  FixedTypeLocus} 
	If $d - b_1 - \dots - b_k$ is odd, then $Z_{\underline{b}}$ is empty.  Otherwise, it is a locally closed, irreducible subset of dimension $$\operatorname{dim}(Z_{\underline{b}}) = g - (b_1-1) - \dots - (b_k-1).$$
\end{lemma}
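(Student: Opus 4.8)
The plan is to realize $Z_{\underline{b}}$ as the image of a family of generalized line bundles parameterized by a Picard scheme, and then to exploit the transitivity statement of Lemma~\ref{Lemma: ActionIsTrans}. First I would dispose of the parity statement: by Fact~\ref{Fact: TypeRestrict}, every generalized line bundle of degree $d$ has $\deg(\calI) - b(\calI) = d - (b_1 + \dots + b_k)$ even, so $Z_{\underline{b}}$ is empty when this is odd. Assume henceforth it is even. Next, following the Green--Eisenbud classification (\cite[Thm.~1.1]{green95}), I would fix a divisor $D = b_1 p_1 + \dots + b_k p_k$ on $X_{\red}$ realizing the prescribed local index sequence, form the associated blow-up $f \colon X' := \operatorname{Bl}_D(X) \to X$, which is again a ribbon, and note that every generalized line bundle $\calI$ with local index sequence $\underline{b}$ is of the form $f_*(\calL')$ for a unique line bundle $\calL'$ on $X'$ of the appropriate degree $d' = d - b(\underline{b})$. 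However, the choice of the \emph{points} $p_i$ is not fixed in the definition of $Z_{\underline{b}}$ — only the multiplicities are — so I would instead let the points vary: work over the configuration space of $k$ distinct points in $X_{\red}$ (with the $b_i$ grouped according to which are equal), take the universal blow-up, and push forward the universal line bundle on the relative Picard scheme.

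The key steps, in order, are: (1) construct the universal family. Over the base $B$ consisting of the configuration space of points $(p_1,\dots,p_k)$ (stratified so the $b_i$ attached to equal points are summed — but generically the $p_i$ are distinct) times the relative Picard scheme $\operatorname{Pic}^{d'}(X'/B)$, there is a universal line bundle $\calL'$ on the universal blow-up $X'_B$, and its pushforward to $X_B$ gives a $B$-flat family of generalized line bundles. Each fiber is stable because $b(\underline b) = b_1 + \dots + b_k \le g - 2\bar g < 1 + g - 2\bar g$, so by Theorem~\ref{Thm: StabilityThm} it lies in $\operatorname{M}_{\s}(\calO_X, P_d)$, producing a morphism $B \to \operatorname{M}_{\s}(\calO_X, P_d)$ whose set-theoretic image is exactly $Z_{\underline{b}}$. (2) Identify the fibers of this morphism. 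Two line bundles $\calL'_1, \calL'_2$ on the \emph{same} blow-up $X'$ give isomorphic pushforwards iff they differ by an element of $\ker(f^* \colon \operatorname{Pic}(X) \to \operatorname{Pic}(X'))$: indeed $f_*\calL'_1 \cong f_*\calL'_2$ forces, by Lemma~\ref{Lemma: ActionIsTrans} applied in reverse together with uniqueness in the Green--Eisenbud theorem, that $\calL'_1 \cong \calL'_2 \otimes f^*\calM$ for a line bundle $\calM$ on $X$ with $f^*\calM$ trivial on $X'$ — wait, more precisely $\calL'_1 = \calL'_2 \otimes (\calL'_1 \otimes (\calL'_2)^{-1})$ and the projection formula shows $f_*$ kills the difference iff that difference is pulled back from an element of $\ker(f^*)$. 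So the fiber dimension is $\dim \ker(f^* \colon \Pic(X) \to \Pic(X'))$. (3) Compute dimensions. We have $\dim \operatorname{Pic}^{d'}(X') = h^1(X', \calO_{X'}) = g(X') = g - b(\underline b) = g - (b_1 + \dots + b_k)$ — actually one must be careful whether $h^1 = g(X')$, but by the computation in the excerpt the generalized Jacobian has dimension $h^1$ and for the purposes of the coarse moduli space only the Jacobian direction contributes, and similarly $\dim \ker(f^*)$ is computed from the exponential sequence for $f$. The point-configuration base contributes $k = $ number of nonzero $b_i$. Adding $\dim B_{\text{conf}} + \dim \operatorname{Pic}^{d'}(X') - \dim(\text{fiber})$ and simplifying should collapse to $g - (b_1 - 1) - \dots - (b_k - 1)$, i.e. $g - \sum b_i + k$, which matches the claimed formula. (4) Deduce that $Z_{\underline b}$ is irreducible (as the image of the irreducible $B$ — irreducible because configuration spaces of a non-singular curve and relative Picard schemes are irreducible) and locally closed (its closure adds only loci $Z_{\underline b'}$ with $\underline b'$ obtained by colliding points and adding indices, hence of strictly smaller dimension, and openness in its closure follows from Lemma~\ref{Lemma: GLBIsOpen} together with semicontinuity of the local index).

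The main obstacle I anticipate is step (2)–(3): correctly accounting for the contribution of the varying points versus the varying line bundle, and in particular getting the dimension of $\ker(f^* \colon \Pic(X) \to \Pic(X'))$ right and checking it cancels against the drop in genus $g(X) - g(X') = \sum b_i$ in just the right way to leave $k$ as the net contribution of the configuration parameters. Concretely: $\dim\operatorname{Pic}(X) = h^1(X,\calO_X)$, $\dim \operatorname{Pic}(X') = h^1(X',\calO_{X'})$, and the kernel of $f^*$ has dimension $h^1(X,\calO_X) - h^1(X',\calO_{X'}) + (\text{correction from } h^0)$; one must verify via the exact sequence $0 \to \calO_X^* \to f_*\calO_{X'}^* \to \calO_D \to 0$ (on units) that this kernel has dimension exactly $\deg D = \sum b_i$ minus the number of distinct points, i.e. exactly $\sum b_i - k$ when the points are distinct. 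Then the net count is $k$ (points) $+ \big(g - \sum b_i\big)$ (Jacobian of $X'$, as that is its $h^1$ once $g$ is not too small — and one sidesteps the subtlety because the strictly-semistable boundary is excluded and stable sheaves with index $\le g - 2\bar g$ force $h^1(X', \calO_{X'}) = g(X')$) $- \big(\sum b_i - k\big)$ (fiber) $ = g - \sum b_i + 2k$ — which is off by $k$ from the target, signaling that the points should \emph{not} be allowed to vary freely, or equivalently that $\ker(f^*)$ should be computed differently. I would resolve this by instead fixing $D$ (so no configuration parameters) and checking that $Z_{\underline b}$ is the image of $\operatorname{Pic}^{d'}(X')$ alone modulo $\ker f^*$: then $\dim Z_{\underline b} = (g - \sum b_i) - ?$, and reconciling with $g - \sum b_i + k$ shows one actually wants to \emph{add} $k$ by letting points move but \emph{not} quotient by them in $\ker f^*$ — the honest computation is $\dim Z_{\underline{b}} = k + g(X') = k + g - \sum b_i = g - \sum(b_i - 1)$, with the fiber of the full family over a point of $Z_{\underline b}$ being the locus of $(D, \calL')$ mapping to it, which has dimension $\dim\ker(f^*)$ coming entirely from the $\calL'$ direction with $D$ rigidified by $\calI$. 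Pinning down this bookkeeping cleanly — most likely by directly exhibiting, for a generalized line bundle $\calI \in Z_{\underline b}$ with associated blow-up $f\colon X' \to X$, a bijection between a neighborhood in $Z_{\underline b}$ and an open set in $(\text{Sym-type configuration space}) \times J^0(X')$ — is where the real care is needed; everything else is formal.
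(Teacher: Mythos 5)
Your overall plan is the paper's: parity via Fact~\ref{Fact: TypeRestrict}, a parameterization of $Z_{\underline{b}}$ by (configurations of $k$ points) times a Picard variety, irreducibility from irreducibility of the parameter space, and local closedness from constructibility plus semicontinuity of the index. But the heart of the lemma is the dimension count, and your steps (2)--(3) contain a genuine error that you notice but never actually repair. In your model the parameter is a pair $(D, \calL')$ with $\calL'$ a line bundle on $X' = \operatorname{Bl}_D(X)$, and for that model the fibers of the map to moduli are single points, not copies of $\ker(f^* \colon \Pic(X) \to \Pic(X'))$: by the uniqueness in the Green--Eisenbud classification, $\calI = f_*(\calL')$ determines the blow-up (equivalently $D$, via the local indices) and determines $\calL'$ up to isomorphism, so nothing gets quotiented out. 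Your step (2) claim that $f_*\calL'_1 \cong f_*\calL'_2$ iff the two differ by an element of $\ker(f^*)$ is vacuous (a line bundle on $X'$ pulled back from $\ker(f^*)$ is trivial), and your estimate $\dim\ker(f^*) = \sum b_i - k$ is also wrong: under the standing bound $\sum b_i \le g - 2\bar{g}$ both $X$ and $X'$ have nilradicals of negative degree, so $\dim\Pic(X) = g$, $\dim\Pic(X') = g - \sum b_i$, $f^*$ is surjective, and the kernel has dimension exactly $\sum b_i$ (the length of $f_*\calO_{X'}^{*}/\calO_{X}^{*}$ is $\deg D$).

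So there are two consistent bookkeepings: (a) parameterize by $(D, \calL')$, with point fibers, giving $\dim Z_{\underline{b}} = k + \dim\Pic(X') = k + (g - \sum b_i)$; or (b) as the paper does, map $U \times \Pic^{d - \sum b_i}(X) \to \operatorname{M}(\calO_X, P_d)$ by $(\Sigma, \calL) \mapsto \calI_\Sigma \otimes \calL$, where $U \subset \operatorname{Hilb}^{\sum b_i}(X_{\red})$ is the locus of subschemes supported at $k$ distinct points with multiplicities $b_1, \dots, b_k$; then Lemma~\ref{Lemma: ActionIsTrans} identifies the image with $Z_{\underline{b}}$ and each fiber with $\ker(f^*)$, of dimension $\sum b_i$, giving $(g + k) - \sum b_i$. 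Your concluding ``honest computation'' states the correct number but still describes the fiber as $\ker(f^*)$ coming from the $\calL'$ direction, which is false, so the argument as written does not cohere; committing to either model closes the count. The remaining items --- stability of every member of the family (from $\sum b_i \le g - 2\bar{g}$, so the image lies in the stable locus), irreducibility, and local closedness via upper semicontinuity of the local index together with constructibility --- are fine and agree with the paper's argument.
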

\begin{proof}
	Set $b := b_1+\cdots + b_k$, the index of a generalized line bundle with local index sequence $\underline{b}$.  When $d -b$ is odd, the claim is just a restatement of 
Fact~\ref{Fact: TypeRestrict}.  Thus, assume $d-b$ is even.  We prove the lemma by parameterizing $Z_{\underline{b}}$ by an irreducible variety of the 
appropriate dimension.

Let $U \subset \operatorname{Hilb}^{b}(X_{\text{red}})$ denote the subset parameterizing closed subschemes $\Sigma$ supported at $k$ distinct points with multiplicities $b_1, \dots, b_k$. This subset is locally closed and irreducible of dimension $k$, because it is isomorphic to an open subset of the $k$-th symmetric power of $X_{\text{red}}$.
The hypothesis on $b$ implies that a generalized line bundle of index $b$ is stable, so we can define a morphism $U \times \operatorname{Pic}^{d-b}(X) \to \operatorname{M}(\calO_{X}, P_d)$ by the rule $(\Sigma, \calL) \mapsto \calI_{\Sigma} \otimes \calL$.  Here $\calI_{\Sigma}$ is the ideal sheaf of $\Sigma$.

Observe that $U$ has been chosen so that if $p_1, \dots, p_k$ is a collection of $k$ distinct points, then there is a unique closed subscheme $\Sigma$ corresponding to a
point of $U$ with the property that $b_{p_i}(\calI_{\Sigma}) = b_i$.  By Lemma~\ref{Lemma:  ActionIsTrans}, we may deduce that the image of 
$U \times \operatorname{Pic}^{d-b}(X) \to \operatorname{M}(\calO_{X}, P_d)$ is $Z_{\underline{b}}$. Note that the fiber over a point is an irreducible variety of dimension $b$, given by
$\ker(f^{*}: \operatorname{Pic}(X)\to \operatorname{Pic}(X'))$ where $f: X'\to X$ is the associated blow-up.  We can immediately conclude that $Z_{\underline{b}}$ is irreducible and constructible.

Furthermore, using the formula $b(\calI) = \operatorname{len}( \ShEnd(\calI)/\calO_{X})$ for index, it is easy to see that the index of a generalized line bundle 
can only increase under specialization.  Because $Z_{\underline{b}}$ is constructible, it follows that this subset is locally closed.  To complete the proof, we 
compute the dimension of $Z_{\underline{b}}$:
\begin{align*}
	\dim(Z_{\underline{b}}) &= \dim( U \times \operatorname{Pic}_{X}^{d-b}) - \dim( \text{Fiber} )\\
				&= (g+k)-b.
\end{align*}
\end{proof}

\begin{lemma} \label{Lemma: HowGLBSpecialize I}
	Let $X$ be a ribbon and $\calI$ a generalized line bundle.  Say that $p_0 \in X$ is a point such that $b_{p_0}(\calI) = b_0+2$ for some $b_0 \ge 0$.  Then there 
is a $\Spec(k[[\alpha]])$-flat, finitely presented $\calO_{X \times \Spec(k[[\alpha]])}$-module $\calI_\alpha$ whose special fiber is isomorphic to $\calI$
and whose generic fiber $\calI_{\alpha}[\alpha^{-1}]$ is a generalized line bundle with local index
\begin{displaymath}
	b_{p}(\calI_{\alpha}[\alpha^{-1}]) = \begin{cases}
					b_{p}(\calI) & \text{if $p \ne p_0$;}\\
					b_{0} & \text{if $p=p_0$.}
					\end{cases}
\end{displaymath}
\end{lemma}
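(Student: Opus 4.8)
The plan is to construct the deformation $\calI_\alpha$ locally on $X$, where generalized line bundles are completely understood via Definition~\ref{Def: LocalType}, and then glue. By the Green--Eisenbud classification, $\calI$ is a line bundle on the blow-up $f\colon X' = \operatorname{Bl}_D(X) \to X$, and the only interesting behavior is at $p_0$, where the completed local ring map $\widehat\calO_{X,p_0}\to\widehat\calO_{X',q}$ is identified with $\calO_0\to\calO_{b_0+2}$, so that $\calI\otimes\widehat\calO_{X,p_0}\cong I_{b_0+2}$. Away from $p_0$ the deformation should be constant, equal to $\calI\otimes\calO_{X\times\Spec k[[\alpha]]}$. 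So the crux is: exhibit over $\calO_0[[\alpha]]$ a flat, finitely presented module $M_\alpha$ with $M_\alpha/(\alpha)\cong I_{b_0+2}$ and $M_\alpha[\alpha^{-1}]\cong I_{b_0}\otimes_{\calO_0}\calO_0((\alpha))$ (i.e. $I_{b_0}$ over the generic fiber), and then check that this local model agrees, away from the closed point, with the trivial deformation of the line bundle structure so the two can be patched.

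First I would write down the local model. Using the presentation
\begin{displaymath}
	I_{b_0+2} = \langle e, f : \epsilon f = 0,\ s^{b_0+2} f = \epsilon e\rangle
\end{displaymath}
from the paragraph preceding \eqref{Eqn: FreeRes}, I would take $M_\alpha$ to be the $\calO_0[[\alpha]]$-module presented by
\begin{displaymath}
	\langle e, f : \epsilon f = 0,\ s^{b_0+2} f = (\epsilon + \alpha s^{b_0})\, e\rangle,
\end{displaymath}
equivalently the cokernel of $\calO_0[[\alpha]]^2 \xrightarrow{\left(\begin{smallmatrix} \epsilon + \alpha s^{b_0} & s^{b_0+2} \\ 0 & -\epsilon\end{smallmatrix}\right)} \calO_0[[\alpha]]^2$. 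Setting $\alpha = 0$ recovers $I_{b_0+2}$ on the nose. Over the generic fiber $\alpha$ is a unit, so after the change of coordinates $\epsilon' := \epsilon + \alpha s^{b_0}$ (which is again a square-zero generator of the nilradical of $\calO_0((\alpha))$ since $\epsilon'^2 = 2\alpha s^{b_0}\epsilon = 0$ using $\epsilon^2=0$ — and in characteristic $2$ one checks $\epsilon'^2 = \alpha^2 s^{2b_0}\epsilon^2 = 0$ directly) the relation becomes $s^{b_0+2} f = \epsilon' e$, i.e. $s^{b_0}\cdot(s^2 f) = \epsilon' e$; rescaling generators identifies $M_\alpha[\alpha^{-1}]$ with $I_{b_0}$ over $\calO_0((\alpha))$, which is a generalized line bundle of local index $b_0$. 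Flatness over $k[[\alpha]]$ I would get by exhibiting $M_\alpha$ as torsion-free over $k[[\alpha]]$: from the presentation, $M_\alpha$ sits inside a free module after localizing, and $\alpha$ is a nonzerodivisor on $\calO_0[[\alpha]]^2/\operatorname{im}$; concretely, one checks the displayed $2\times 2$ matrix stays injective modulo $\alpha$, so the Koszul-type argument gives $\operatorname{Tor}_1^{k[[\alpha]]}(M_\alpha, k) = 0$.

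Next I would globalize. Let $V = X \setminus \{p_0\}$ and let $W$ be an affine open neighborhood of $p_0$ small enough that $\calN|_W$ is free on a generator $\epsilon$, that $D\cap W$ is supported only at $p_0$, and that $\calI|_W$ is the direct image of a trivialized line bundle on $f^{-1}(W)$; shrinking $W$ we may assume $\calI|_W \cong I_{b_0+2}$ in suitable coordinates $s$ (a local parameter of $X_{\red}$ at $p_0$), $\epsilon$. Over $W\times\Spec k[[\alpha]]$ put the module $M_\alpha$ above; over $V\times\Spec k[[\alpha]]$ put $\calI|_V \otimes_k k[[\alpha]]$. On the overlap $(W\setminus\{p_0\})\times\Spec k[[\alpha]]$, inverting $s$ kills $s^{b_0}$'s contribution only in the sense that $s$ is invertible, so $M_\alpha[s^{-1}]$ is isomorphic, via $e\mapsto$ (local generator) and $f\mapsto \epsilon\cdot(\text{local generator})/s^{b_0+2}$ adjusted by the $\alpha$-term, to $(\calI|_{W\setminus\{p_0\}})\otimes_k k[[\alpha]]$ — because on $W\setminus\{p_0\}$ the sheaf $\calI$ is a line bundle and the relation $s^{b_0+2}f = (\epsilon+\alpha s^{b_0})e$ becomes $f = s^{-b_0-2}(\epsilon+\alpha s^{b_0})e$, i.e. $M_\alpha[s^{-1}]$ is free of rank one over $\calO_0[s^{-1}][[\alpha]]$ generated by $e$, matching the trivial deformation. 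Patching these via this isomorphism produces the desired sheaf $\calI_\alpha$ on $X\times\Spec k[[\alpha]]$; it is finitely presented and $k[[\alpha]]$-flat because both pieces are and flatness is local, and its special and generic fibers are as claimed by the computations above.

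The main obstacle I anticipate is the gluing bookkeeping: making sure the isomorphism on the overlap is compatible with the chosen trivialization of $\calI$ on $W\setminus\{p_0\}$ coming from Green--Eisenbud, and in particular that the $\alpha$-deformation introduced only at $p_0$ genuinely extends by the \emph{constant} family elsewhere rather than forcing a global twist. A cleaner alternative, which I would fall back on if the explicit patching gets unwieldy, is to work entirely on the blow-up side: $\calI = f_*(\calL')$ with $\calL'$ a line bundle on $X' = \operatorname{Bl}_D(X)$, and to deform the \emph{blow-up center} rather than the sheaf — replacing $D = b_0 p_0 + 2 p_0 + (\text{rest}) $ by a family $D_\alpha$ in which two of the points at $p_0$ move off to a nearby reduced pair; the associated blow-up family $f_\alpha\colon X'_\alpha\to X\times\Spec k[[\alpha]]$ is flat and finite, and $\calI_\alpha := (f_\alpha)_*(\calL'_\alpha)$ for a suitable extension $\calL'_\alpha$ of $\calL'$ does the job, with the local index bookkeeping following from the behaviour of $\operatorname{Hilb}^{b_0+2}$ near $(b_0+2)p_0$. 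Either way the verification of flatness and the fiber identifications reduces to the local computation with $\calO_0$ and $I_n$ already in hand.
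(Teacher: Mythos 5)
Your global strategy (deform locally near $p_0$, glue with the constant family, and use Lemma~\ref{Lemma: ActionIsTrans} to handle the line-bundle twist) is sound and is essentially how the paper proceeds, but your local model is wrong, and this is the heart of the matter. The substitution $\epsilon' := \epsilon + \alpha s^{b_0}$ is not a change of coordinates of $\calO_0[[\alpha]][\alpha^{-1}]$: one has $\epsilon'^2 = 2\alpha s^{b_0}\epsilon + \alpha^2 s^{2b_0} \neq 0$ (and in characteristic $2$, $\epsilon'^2 = \alpha^2 s^{2b_0} \neq 0$), so $\epsilon'$ is in fact a non-zero divisor, not a square-zero generator of the nilradical, and the identification of the generic fiber with $I_{b_0}$ collapses. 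Concretely, in the module $M_\alpha = \langle e,f : \epsilon f = 0,\ s^{b_0+2}f = (\epsilon + \alpha s^{b_0})e\rangle$ one computes
\begin{displaymath}
\alpha\, s^{b_0}\epsilon\, e \;=\; \epsilon\bigl((\epsilon+\alpha s^{b_0})e\bigr) \;=\; \epsilon\, s^{b_0+2} f \;=\; 0,
\end{displaymath}
while $s^{b_0}\epsilon\, e \neq 0$ in $M_\alpha$ (writing $(s^{b_0}\epsilon,0)$ in terms of the two relation vectors forces an equation $\bar c\,\alpha = 1$ in $k[[s,\alpha]]$). So $M_\alpha$ has $\alpha$-torsion and is not $k[[\alpha]]$-flat; moreover after inverting $\alpha$ the element $\epsilon e$ is nonzero and killed by the non-zero divisor $s^{b_0}$ when $b_0 \geq 1$, so the generic fiber is not even torsion-free, and for $b_0 = 0$ one finds $M_\alpha[\alpha^{-1}] \cong R/(\epsilon)$, a rank-one module over the reduced curve rather than a generalized line bundle. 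Your flatness argument ("the matrix stays injective modulo $\alpha$") does not repair this: injectivity of the presentation matrix mod $\alpha$ does not compute $\Tor_1^{k[[\alpha]]}(M_\alpha,k)$, and flatness genuinely fails.

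The missing idea, which is exactly the paper's trick, is that the length-two piece that leaves $p_0$ must move off as a \emph{Cartier divisor on the ribbon} $X$, i.e.\ in a direction involving $\epsilon$, so that its ideal is locally principal and contributes local index $0$. The paper deforms the ideal $(\epsilon, s^{b_0+2})$ (after first replacing $\calI$ by the ideal of a subscheme of $X_{\red}$ with the same local indices, via Lemma~\ref{Lemma: ActionIsTrans}) to an explicit family of ideals whose generic fiber is $(\epsilon, s^{b_0}) \cap (\epsilon - \alpha^{b_0+1}(s-\alpha), (s-\alpha)^2)$: index $b_0$ at $p_0$ and a degree-$2$ Cartier divisor at $s=\alpha$, with flatness read off from constancy of the colength. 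Your fallback — deforming the blow-up center $D$ so that "two of the points at $p_0$ move off to a nearby reduced pair" — keeps the moving length-two piece inside $X_{\red}$ and therefore produces a generic fiber with local index $1+1$ (or $2$) at new points; that proves a statement like Lemma~\ref{Lemma: HowGLBSpecialize II}, not the present lemma, whose conclusion requires $b_p$ unchanged for all $p \neq p_0$. So as written the proposal has a genuine gap: neither the main construction nor the fallback yields a flat family whose generic fiber has the asserted local index sequence.
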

\begin{proof}
	Given $\calI$, we first exhibit another generalized line bundle $\calI'$ with the same local index as $\calI$ at every point $p \in X$ that admits a suitable deformation.  Then we use
Lemma~\ref{Lemma:  ActionIsTrans} to deform $\calI$.

Say $p_1, \dots, p_k$ are the points $p$ with $b_{p}(\calI) \ne 0$, labeled so $p_1 = p_0$.  Define $Z$ to be the (unique) closed subscheme of $X_{\text{red}}$ that is supported at $p_1, \dots, p_k$ and has length $b_{p_i}(\calI)$ at $p_i$.  If we consider $Z$ as a closed subscheme of $X$, then the ideal sheaf $\calI' := \calI_Z$ is a generalized line bundle with the property that $b_{p}(\calI) = b_{p}(\calI')$ for all $p \in X$.  We deform $\calI'$ by deforming $Z$ as a closed subscheme.  

Fix an open affine neighborhood $U$ of $p_1$ with the property that $p_i \notin U$ for $i \neq 1$ and $Z|_{U}$ (considered as a subscheme of $X$) is defined by $(\epsilon, s^{b_0+2})$
for regular functions $\epsilon, s$ with $\epsilon$ in the nilradical.  A deformation of $(\epsilon, s^{b_0+2})$ over $k[[\alpha]]$ is given by the ideal generated by
\begin{gather} \label{Eqn: Deformation}
	 \epsilon (s-\alpha),\\
	s^{b_0} (\epsilon - \alpha^{b_0+1} (s-\alpha) ),  \notag \\
	s^{b_0} (s-\alpha)^2,  \notag \\
	\epsilon - \alpha s^{b_0+1} + \alpha^2 s^{b_0}. \notag
\end{gather}
One may verify that the restriction of this ideal to the generic fiber equals $(\epsilon, s^{b_0}) \cap (\epsilon - \alpha^{b_0+1} (s-\alpha), (s-\alpha)^2)$,
which geometrically is the union of a degree $2$ Cartier divisor supported at $\{ s = \alpha\}$ and the degree $b_0$ closed subscheme contained in the reduced subcurve and supported at $p_1$.
In particular, the degree of the generic fiber equals the degree of the special fiber, so this family of closed subschemes is flat.  We can extend this deformation of $Z_{U}$ to a deformation $Z_{\alpha}$ of $Z$ by defining $Z_{\alpha}$ to be the constant deformation
away from $U$.  The associated ideal $\calI'_{\alpha} := \calI_{Z_{\alpha}}$ is a deformation of $\calI'$ satisfying the conditions of the lemma.

This proves the lemma when $\calI = \calI'$.  To deduce the general case, by Lemma~\ref{Lemma:  ActionIsTrans} there exists a line bundle $\calL$ such that
$\calI = \calI' \otimes \calL$.  If we define $\calI_{\alpha}$ to be the tensor product of $\calI'_{\alpha}$ with the constant family with fiber $\calL$, then $\calI_{\alpha}$ 
satisfies the conditions of the lemma.  This completes the proof.
\end{proof}

\begin{theorem} \label{Thm: LineBundleComp}
	Let $X$ be a ribbon and $i$ an integer satisfying
\begin{displaymath}
	1 \le i \le \begin{cases}
				(g+2)/2 - \bar{g} & \text{if $d$ is even;}\\
				(g+1)/2 - \bar{g} & \text{if $d$ is odd.}
			\end{cases}
\end{displaymath}
	Define $\bar{Z}_{i} \subset \operatorname{M}(\calO_{X}, P_d)$ to be the Zariski closure of 
$Z_{\underline{b}}$, where $\underline{b} = (1, \dots, 1)$ is the sequence of $1$'s with length equal to 
$2 i -2$ if $d$ is even and $2 i -1$ if $d$ is odd. 

	Then $\bar{Z}_{i}$ is a $g$-dimensional irreducible component of $\operatorname{M}(\calO_{X}, P_d)$.  Furthermore,
if $\bar{Z} \subset \operatorname{M}(\calO_{X}, P_d)$ is any irreducible component that
contains a stable generalized line bundle, then $\bar{Z} = \bar{Z}_i$ for some $i$.
\end{theorem}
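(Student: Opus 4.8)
The plan is to reduce everything to three facts already in hand: the dimension formula for the strata $Z_{\underline b}$ (Lemma~\ref{Lemma: FixedTypeLocus}), the explicit one-parameter specializations of Lemma~\ref{Lemma: HowGLBSpecialize I}, and the openness of the generalized line bundle locus (Lemma~\ref{Lemma: GLBIsOpen}); no new geometric input is needed. I would first dispose of the easy half. Write $\underline b=(1,\dots,1)$ for the all-ones sequence of length $m$, where $m=2i-2$ if $d$ is even and $m=2i-1$ if $d$ is odd. The bound on $i$ is exactly the inequality $m\le g-2\bar g$ required for $Z_{\underline b}$ to be defined, and $m\equiv d\pmod 2$; so Lemma~\ref{Lemma: FixedTypeLocus} applies and shows that $Z_{\underline b}$ is nonempty, locally closed, and irreducible of dimension $g-\sum_j(b_j-1)=g$, whence $\bar Z_i$ is irreducible of dimension $g$.

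The key ingredient is the following specialization fact, proved by induction on $\sum_j b_j$: for every sequence $\underline b=(b_1,\dots,b_k)$ with $\sum_j b_j\le g-2\bar g$ one has $\bar Z_{\underline b}\subseteq\bar Z_{i(\underline b)}$, where $i(\underline b)$ indexes the all-ones sequence of length $r(\underline b):=\#\{j : b_j\text{ is odd}\}$. The all-ones case is trivial. Otherwise some $b_j\ge 2$; given a stable generalized line bundle $\calI\in Z_{\underline b}$ with $b_{p_0}(\calI)=b_j$, apply Lemma~\ref{Lemma: HowGLBSpecialize I} at $p_0$ with $b_0:=b_j-2\ge 0$ to obtain a $\Spec k[[\alpha]]$-flat family with special fiber $\calI$ and generic fiber a stable generalized line bundle whose local index sequence $\underline b'$ is $\underline b$ with the entry $b_j$ replaced by $b_j-2$ (removed if $b_j=2$). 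Both fibers are stable, so the family induces a morphism $\Spec k[[\alpha]]\to\operatorname{M}(\calO_X,P_d)$, giving $[\calI]\in\bar Z_{\underline b'}$. Since $\sum b'_j<\sum b_j$ and $r(\underline b')=r(\underline b)$, the inductive hypothesis yields $[\calI]\in\bar Z_{i(\underline b)}$; as $\bar Z_{i(\underline b)}$ is closed and $\calI\in Z_{\underline b}$ was arbitrary, $\bar Z_{\underline b}\subseteq\bar Z_{i(\underline b)}$. Note $r(\underline b)\le\sum b_j\le g-2\bar g$ and $r(\underline b)\equiv\sum b_j\equiv d\pmod 2$ (Fact~\ref{Fact: TypeRestrict}), so $i(\underline b)$ lies in the range prescribed by the theorem.

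Both assertions now follow. First, the locus $U$ of stable generalized line bundles is open in $\operatorname{M}(\calO_X,P_d)$: it lies in the open subscheme $\operatorname{M}_{\s}(\calO_X,P_d)$, which carries an \'etale-local universal sheaf, and Lemma~\ref{Lemma: GLBIsOpen} applied to that sheaf shows $U$ is open; moreover $U$ is the disjoint union of the locally closed strata $Z_{\underline b}$ over all $\underline b$ with $\sum_j b_j\le g-2\bar g$. Let $W$ be an irreducible component of $\operatorname{M}(\calO_X,P_d)$ that meets $U$. Then $W\cap U$ is a nonempty, hence dense, open subset of $W$, so the generic point $\xi$ of $W$ lies in $U$, hence in a single stratum $Z_{\underline b'}$; therefore $W=\overline{\{\xi\}}\subseteq\bar Z_{\underline b'}\subseteq\bar Z_{i(\underline b')}$ by the specialization fact, and maximality of the component $W$ gives $W=\bar Z_{i(\underline b')}$ with $i(\underline b')$ in the stated range. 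This is precisely the ``Furthermore'' statement, because a component containing a stable generalized line bundle meets $U$. For the first assertion, pick any irreducible component $W$ with $\bar Z_i\subseteq W$; since $\emptyset\ne Z_{\underline b}\subseteq\bar Z_i\subseteq W$, $W$ meets $U$, so $W=\bar Z_{i'}$ for some $i'$ by the above, and then $\bar Z_i\subseteq W=\bar Z_{i'}$ with both sides irreducible, closed, and of dimension $g$ forces $W=\bar Z_i$; hence $\bar Z_i$ is an irreducible component.

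The step I expect to require the most care is transporting Lemma~\ref{Lemma: GLBIsOpen}, which is stated for the Simpson moduli functor, to the Zariski-openness of $U$ on the coarse space $\operatorname{M}(\calO_X,P_d)$ itself --- this is where one uses that $\operatorname{M}_{\s}(\calO_X,P_d)$ is a fine moduli space \'etale-locally --- since this openness is exactly what prevents a component from containing a $g$-dimensional family of stable generalized line bundles while having a vector bundle on $X_{\text{red}}$ as its generic member. Beyond that, the argument is bookkeeping: checking that the all-ones sequence produced by the induction has length in the prescribed range and parity matching $d$, and the elementary maximality properties of irreducible components.
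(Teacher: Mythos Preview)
Your proposal is correct and follows essentially the same route as the paper: both arguments combine Lemma~\ref{Lemma: FixedTypeLocus} (dimension and irreducibility of the strata), Lemma~\ref{Lemma: HowGLBSpecialize I} (iterated to push every stratum into some $\bar Z_i$), and Lemma~\ref{Lemma: GLBIsOpen} (openness of the generalized line bundle locus), then finish with the standard irreducibility-plus-dimension argument. Your write-up is in fact more explicit than the paper's in two respects---you spell out the induction showing $\bar Z_{\underline b}\subseteq\bar Z_{i(\underline b)}$ with $i(\underline b)$ determined by the odd entries of $\underline b$, and you flag the passage from Lemma~\ref{Lemma: GLBIsOpen} on the moduli functor to openness on the coarse space via the \'etale-local universal sheaf on $\operatorname{M}_{\s}$---but neither of these constitutes a different strategy. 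One cosmetic point: the sentence ``the generic point $\xi$ of $W$ lies in $U$, hence in a single stratum $Z_{\underline b'}$'' is slightly informal, since the $Z_{\underline b}$ are a priori described via their closed points; it is cleaner to argue, as the paper does, that the dense open $W\cap U$ is contained in the finite union $\bigcup_{\underline b}\bar Z_{\underline b}\subseteq\bigcup_i\bar Z_i$ and then invoke irreducibility of $W$.
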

\begin{proof}
	The theorem follows from Lemmas \ref{Lemma:  GLBIsOpen}, \ref{Lemma:  FixedTypeLocus}, and \ref{Lemma: HowGLBSpecialize I}.
Observe that a repeated application of Lemma~\ref{Lemma: HowGLBSpecialize I} shows that $\cup \bar{Z}_i$ contains the locus of generalized line bundles.  Furthermore, 
the subsets $\bar{Z}_i$ are all irreducible and of dimension $g$ by Lemma~\ref{Lemma: FixedTypeLocus}.  We now use these facts to prove the proposition.

First, let us prove that every irreducible component that contains a stable generalized line bundle is of the form $\bar{Z}_i$ for some $i$.  Say $\bar{Z}$ is
an irreducible component that contains a stable generalized line bundle. The subset of $\bar{Z}$ consisting of stable generalized line bundles
is non-empty and open (Lemma~\ref{Lemma:  GLBIsOpen}), hence dense.  We may conclude that $\bar{Z}$ is contained in the union 
$\cup \bar{Z}_j$, but each of the $\bar{Z}_j$'s is irreducible, so this is only possible if $\bar{Z} = \bar{Z}_i$
for some $i$.

This shows that some of the $\bar{Z}_i$'s are irreducible components of $\operatorname{M}(\calO_{X}, P_d)$.  To complete
the proof, we must show that every $\bar{Z}_i$ is a component, so let $\bar{Z}_i$ be given.  Certainly this subset
is contained in some component, which we have shown must be of the form $\bar{Z}_j$ for some $j$. In other words,  
we have $\bar{Z}_i \subset \bar{Z}_j$. As both  are irreducible of dimension $g$, this is only possible
if $\bar{Z}_i = \bar{Z}_j$, showing that $\bar{Z}_i$ is a component.  This completes the proof.
\end{proof}

Theorem~\ref{Thm: LineBundleComp} describes all the irreducible components of $\operatorname{M}(\calO_{X}, P_d)$ 
that contain a stable generalized line bundle. What about the locus parameterizing sheaves of the form
$i_* \calE$ for $\calE$ a semi-stable rank $2$ vector bundle on $X_{\text{red}}$?  We prove the following result.
\begin{theorem} \label{Thm: VectorComponent}
	Let $X$ be a ribbon.  If $g \le 2 \bar{g}-1$, then no component of $\operatorname{M}(\calO_{X}, P_d)$
	contains a stable generalized line bundle.  Furthermore, the moduli space 
	is empty when $\bar{g}=0$ and $d-g$ is even. Otherwise $\operatorname{M}(\calO_{X}, P_d)$ is irreducible of dimension
\begin{equation} \label{Eqn: VectBunDim}
	\dim \operatorname{M}(\calO_{X}, P_d) = \begin{cases}
								0 & \text{if $\bar{g}=0$;} \\
								1 & \text{if $\bar{g}=1$, $d-g$ is even;}\\
								2 & \text{if $\bar{g}=1$, $d-g$ is odd;}\\
								4 \bar{g} - 3 & \text{if $\bar{g} \ge 2$.}
							\end{cases}
\end{equation}

	If $g > 2 \bar{g}-1$, then $\operatorname{M}(\calO_{X}, P_d)$ has at most one irreducible component that
does not contain a stable generalized line bundle. When this component exists, it has dimension $4 \bar{g} - 3$.
This component does not exist when $\bar{g} \le 1$, but does exist when $\bar{g} \ge 2$ and $4\bar{g}-3\ge g$.
\end{theorem}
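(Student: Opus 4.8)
The plan is to split the argument into the two regimes $g \le 2\bar g - 1$ and $g > 2\bar g - 1$, and in each case reduce the statement to the classification of length $2$ semi-stable sheaves (Theorem~\ref{Thm: StabilityThm}), a dimension count for the vector-bundle locus, and the behavior of generalized line bundles under specialization (Lemma~\ref{Lemma: HowGLBSpecialize I}). First I would observe that by Theorem~\ref{Thm: StabilityThm} every point of $\operatorname{M}(\calO_{X}, P_d)$ is represented either by a semi-stable generalized line bundle or by $i_* \calE$ for $\calE$ a semi-stable rank $2$ vector bundle on $X_{\text{red}}$ of degree $d + 2\bar g - 1 - g$; by Lemma~\ref{Lemma:  GLBIsOpen} the locus of generalized line bundles is open, so its complement $W$ (the locus of sheaves $i_*\calE$) is closed. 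The whole theorem is then a statement about the dimension of $W$ and about whether $W$ can fail to be contained in the closure of the generalized-line-bundle locus.

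The key input for the dimension of $W$ is that $W$ is (set-theoretically) the image of the Simpson moduli space $\operatorname{M}(\calO_{X_{\text{red}}}, \cdot)$ of semi-stable rank $2$ bundles on $X_{\text{red}}$ under the finite map $\calE \mapsto i_*\calE$, and this map is injective on $\operatorname{Gr}$-equivalence classes (stability of $i_*\calE$ is equivalent to stability of $\calE$, as recorded after Lemma~\ref{Lemma: FilterGLB}); so $\dim W$ equals the dimension of the moduli space of semi-stable rank $2$ bundles of the given degree on $X_{\text{red}}$. This is classical: for $\bar g \ge 2$ it is $4\bar g - 3$; for $\bar g = 1$ it is $1$ if the degree is even (the bundle $\calE$ is $S$-equivalent to a sum of two degree-$(d-g)/2$ line bundles, giving $\operatorname{Pic}^0$ which is $1$-dimensional) and $2$ if the degree is odd (a $\operatorname{P}^1$-bundle over $\operatorname{Pic}^1$, or rather its relevant stratum — one gets dimension $2$); for $\bar g = 0$ there are no stable bundles and the unique semi-stable class $\calO(e)\oplus\calO(e)$ exists only when $d - g$ is even, giving a single point, and the space is empty otherwise. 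I would state these as known facts, citing the standard references for moduli of vector bundles on curves, rather than reprove them. The translation of degrees uses $\deg(\calI) - b(\calI) = \deg(\calI')$ from Fact~\ref{Fact: TypeRestrict} together with the Hilbert-polynomial bookkeeping already done in Section~\ref{Sect: StableSh}.

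With $\dim W$ in hand I would treat the two regimes. If $g \le 2\bar g - 1$ then Inequality~\eqref{Eqn: StabilityIneq} reads $b(\calI) \le 1 + g - 2\bar g \le 0$, forcing $b(\calI) = 0$ for a stable generalized line bundle, i.e. none exists unless $g = 2\bar g - 1$, and even then only semi-stable line bundles occur; in particular no irreducible component has a stable generalized line bundle as general member, and since $\operatorname{M}(\calO_{X}, P_d)$ then consists entirely of the closure of line bundles together with $W$, I would argue that the line-bundle locus is itself contained in $W$ or has dimension $\le \dim W$, so that $\operatorname{M}(\calO_{X}, P_d) = W$ and hence is irreducible of the stated dimension (and empty in the $\bar g = 0$, $d - g$ even case because then there are no semi-stable sheaves at all). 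If $g > 2\bar g - 1$, then by Theorem~\ref{Thm: LineBundleComp} every component containing a stable generalized line bundle is one of the $\bar Z_i$; any other component must be contained in $W$, and conversely $W$ is irreducible, so there is at most one such extra component, namely the closure of $W$ when $W$ is not already swallowed by $\cup \bar Z_i$. For $\bar g \le 1$ I would show $W \subset \cup \bar Z_i$ by exhibiting, for each semi-stable $i_*\calE$, a flat family degenerating a generalized line bundle to it — concretely, for $\bar g = 0$ the unique sheaf $i_*(\calO(e)\oplus\calO(e))$ is $\operatorname{Gr}$-equivalent to the boundary point already appearing in $\bar Z_i$ for suitable $i$ by Lemma~\ref{Lemma: FilterGLB} and Corollary~\ref{Cor: RatlStability}, and for $\bar g = 1$ a similar deformation argument (using that $\dim W \le 2 \le g$ under $g > 1$) places $W$ inside a $\bar Z_i$; whereas for $\bar g \ge 2$ and $4\bar g - 3 \ge g$ the locus $W$ has dimension $4\bar g - 3 \ge g = \dim \bar Z_i$, and since its general member is a stable vector bundle, not a generalized line bundle, $W$ cannot be contained in any $\bar Z_i$, so $\overline W$ is a genuine extra component.

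The main obstacle I expect is the $\bar g \le 1$ case of showing $W \subset \cup \bar Z_i$: one must produce an explicit flat degeneration of a semi-stable generalized line bundle of some admissible local index sequence to each $i_*\calE$ with $\calE$ semi-stable of the relevant degree. For $\bar g = 0$ this is essentially Corollary~\ref{Cor: RatlStability} (the boundary point of $\operatorname{M}(\calO_X, P_0)$ is exactly the $\operatorname{Gr}$-class of $i_*(\calO(e)\oplus\calO(e))$), but for $\bar g = 1$ it requires a genuine construction analogous to the one in Lemma~\ref{Lemma: HowGLBSpecialize I} — deforming an ideal $(\epsilon, s^2)$-type local picture so that the generic fiber is a generalized line bundle of index $0$ (a line bundle) while the special fiber acquires a nilpotent-killed structure, i.e. becomes $i_*\calE$. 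Getting the global flat family, and checking that every semi-stable $\calE$ arises (using that the moduli of such $\calE$ is irreducible and that the degeneration locus in it is open and non-empty), is where the real work lies; everything else is bookkeeping with the degree and index formulas already established.
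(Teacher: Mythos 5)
Your overall skeleton matches the paper's: classify points via Theorem~\ref{Thm: StabilityThm}, identify the vector-bundle locus $W$ with the image of the classical moduli space $\operatorname{M}(2,e)$ of rank $2$ semi-stable bundles on $X_{\red}$ of degree $e = d+2\bar{g}-1-g$ and quote its known dimensions, use Theorem~\ref{Thm: LineBundleComp} to see that any extra component must be the closure of $W$, and settle $\bar{g}\ge 2$, $4\bar{g}-3\ge g$ by the same dimension count the paper uses. However, there is a genuine gap at exactly the point you flag as the ``real work'': the case $\bar{g}=1$, $d-g$ even, where stable rank $2$ bundles on the elliptic curve $X_{\red}$ exist and you must show every $i_*\calE$ lies in $\cup\bar{Z}_i$. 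Your plan is to build an explicit local flat degeneration of a line bundle to $i_*\calE$ and then spread it to all $\calE$ ``using that the moduli of such $\calE$ is irreducible and that the degeneration locus in it is open and non-empty.'' No such local construction is given (and it is not a routine variant of Lemma~\ref{Lemma: HowGLBSpecialize I}, whose fibers are always generalized line bundles), and the openness claim is unjustified: the set of $\calE$ with $i_*\calE$ in the closure of the generalized-line-bundle locus is the intersection of $W$ with a closed set, hence closed in $\operatorname{M}(2,e)$, and closed plus non-empty plus irreducibility of $\operatorname{M}(2,e)$ does not give all of $W$. The paper closes this case by a different mechanism: for $g=2$ the generalized Jacobian $\operatorname{Pic}^{d}(X)$ is not proper (since $g\ge\bar{g}+1$), so its image in the projective space $\operatorname{M}(\calO_{X},P_d)$ is not closed and its closure must contain some stable $i_*\calE$; transitivity of the $\operatorname{Pic}^{0}(X)$-action on stable rank $2$ bundles of odd degree on $X_{\red}$ then forces the closure to contain all of them; and for $g>2$ one reduces to $g=2$ by blowing up a length $g-2$ subscheme and pushing the resulting family forward. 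You would need either to reproduce this properness-plus-group-action argument (or supply the missing explicit degeneration together with an equivariance or transitivity argument replacing the false openness step) for your proof to be complete.

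A smaller point in the regime $g\le 2\bar{g}-1$: concluding $\operatorname{M}(\calO_{X},P_d)=W$ from ``the line-bundle locus is contained in $W$ or has dimension $\le\dim W$'' is not a valid inference; dimension comparison cannot rule out components outside $W$. What makes it true is Lemma~\ref{Lemma: FilterGLB}: with no stable generalized line bundles available, every semi-stable generalized line bundle is strictly semi-stable and $\operatorname{Gr}$-equivalent to the pushforward of a rank $2$ bundle, so its moduli point already lies in $W$; this is how the paper gets that $\operatorname{M}(2,e)\to\operatorname{M}(\calO_{X},P_d)$ is a set-theoretic bijection. This step is easily repaired, unlike the $\bar{g}=1$ gap above.
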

Theorem~\ref{Thm: VectorComponent} follows from known results about moduli of vector bundles on non-singular curves, so before giving the proof, we recall the 
relevant facts from \cite{potier}.

Computing Euler characteristics, we see that the direct image of a rank $2$ semi-stable vector bundle $\calE$ on $X_{\text{red}}$ corresponds to a point of 
$\operatorname{M}(\calO_{X}, P_d)$ precisely when $\deg(\calE) = d + 2 \bar{g} - g - 1$.  Thus, if we set
\begin{displaymath}
	e := d + 2 \bar{g} - g - 1
\end{displaymath}
and $\operatorname{M}(2,e)$ equal to the coarse moduli space of semi-stable rank $2$ vector bundles on $X_{\text{red}}$ of degree $e$
(which exists by, say, Simpson's work), then the rule $\calE \mapsto i_* \calE$ defines a closed embedding
$\operatorname{M}(2,e) \to \operatorname{M}(\calO_{X}, P_d)$ with image equal to the subset 
parameterizing sheaves of the form $i_* \calE$.  The geometry of $\operatorname{M}(2,e)$ 
depends on the genus of $X_{\text{red}}$.  There are three cases to consider: $\bar{g}=0$,
$\bar{g}=1$ and $\bar{g} \ge 2$.  

When $\bar{g}=0$, every
rank $2$ vector bundle splits as a direct sum of line bundles, so there are no stable vector bundles and
the strictly semi-stable vector bundles are of the form $\calO_{\bbP^1}(e/2) \oplus \calO_{\bbP^1}(e/2)$
for $e$ even.  Thus, $\operatorname{M}(2,e)$ is empty when $e$ is odd (i.e. $d-g$ is even) and equal to a point
when $e$ is even (i.e. $d-g$ is odd).

The next case to consider is  $\bar{g}=1$.  Again, the geometry of $\operatorname{M}(2,e)$
depends on the parity of $e$.  For $e$ odd (i.e. $d-g$ even), every semi-stable rank $2$
vector bundle of degree $e$ is stable and the determinant map defines an isomorphism
$\operatorname{M}(2,e) \to \operatorname{Pic}^{e}(X_{\text{red}})$ (\cite[Thm.~8.29]{potier}).  In particular, 
$\operatorname{M}(2, e)$ is irreducible of dimension $1$.  By contrast, there are
no stable vector bundles of odd degree on $X_{\text{red}}$ (\cite[pg.~160-161]{potier}), and $\operatorname{M}(2,e)$ is irreducible  of dimension $2$ (\cite[Thm.~8.29]{potier}).

The final case is when $\bar{g} \ge 2$.  In this case, there always exists a stable
rank $2$ vector bundle of degree $e$ (\cite[Thm.~8.28]{potier}), and $\operatorname{M}(2,e)$ is irreducible of dimension
$4 \bar{g}-3$ (\cite[Thm.~8.14, 8.26]{potier}).  We now use these facts to prove the theorem.  

\begin{proof}[Proof of Theorem~\ref{Thm: VectorComponent}]
	The case where $g \le 2 \bar{g}-1$ follows from Theorem~\ref{Thm: StabilityThm}.  That theorem asserts that there are no stable generalized line bundles, so the natural map $\operatorname{M}(2, e) \to \operatorname{M}(\calO_{X}, P_d)$ is a set-theoretic bijection, and the claim follows from 
the results about $\operatorname{M}(2,e)$ just reviewed.  For the remainder of the proof, we assume $g > 2 \bar{g} -1$.  

	The case $\bar{g}=0$ can be treated without difficulty.  By the classification of vector bundles on $\bbP^1$,
it is enough to show that if $d-g$ is odd, then the point of $\operatorname{M}(\calO_{X}, P_d)$ corresponding to the
direct image of 
\begin{displaymath}
	\calO(e/2) \oplus \calO(e/2)
\end{displaymath}
is contained in the closure of the locus of generalized line bundles.  The vector bundle in question is $\operatorname{Gr}$-equivalent to
a generalized line bundle of index $g+1$, and every such generalized line bundle is the specialization
of a stable generalized line bundle by Lemma~\ref{Lemma: HowGLBSpecialize I}, combined with 
Corollary~\ref{Cor: RatlStability}.

	We now turn our attention to the case where $\bar{g}=1$.  When $d-g$ is odd, the claim follows from essentially the same
argument as in the $\bar{g}=0$: there are no stable vector bundles of degree $e$ on $X_{\text{red}}$ and every strictly semi-stable rank $2$ vector
bundle is $\operatorname{Gr}$-equivalent to a generalized line bundle of index $g-1$.  However, when $d-g$ is even, a different argument is needed.  

	Consider first the special case where we not only assume $\bar{g}=1$ and $d-g$ is even, but also $g=2$. Then there are exactly two types of semi-stable
sheaves with degree $d$: line bundles on $X$ and sheaves of the form $i_* \calE$ for $\calE$ a rank $2$ stable
vector bundle on $X_{\text{red}}$.  The moduli space $\operatorname{Pic}^{d}(X)$
is not projective, hence its image in $\operatorname{M}(\calO_{X}, P_d)$ is not closed (because $\operatorname{M}(\calO_{X}, P_d)$
is projective).  Therefore, its closure in $\operatorname{M}(\calO_{X}, P_d)$ must contain some stable vector bundle.  But the natural action of $\operatorname{Pic}^{0}(X)$
on the set of stable rank $2$ vector bundles of degree $e$ is transitive, so if the closure of $\operatorname{Pic}^{d}(X)$ contains one such vector bundle, then it 
contains all such vector bundles.  This completes the proof when $g=2$.

	The case where $g>2$ can be deduced from this.  Suppose now that we relax the condition $g=2$ to $g \ge 2$. Blowing-up a length $g-2$ 
closed subscheme contained in $X_{\text{red}}$, we obtain a finite morphism $f \colon X' \to X$ with $g(X')=2$.  Now suppose that
we are given a sheaf of the form $i_* \calE$ for $\calE$ a stable rank $2$ vector bundle on $X_{\text{red}}$.  We need to show that
$i_* \calE$ is the specialization of a stable generalized line bundle.  By what we have just shown, there exists a family of sheaves
$\calI'$ on $X' \times \Spec(k[[\alpha]])$ whose generic fiber is a stable line bundle and whose special fiber is isomorphic to 
$i'_{*} \calE$.  The direct image of this family under $f$ realizes $i_* \calE$ as the specialization of a stable generalized line bundle.  
This completes the proof in the case that $\bar{g}=1$.

The remaining case is where $\bar{g} \ge 2$ and $4 \bar{g} - 3\ge g$. Under these conditions, we wish to 
show that the closure of the image of $\operatorname{M}(2,e) \to \operatorname{M}(\calO_{X}, P_d)$ is an irreducible component.  Because
$\operatorname{M}(2, e)$ is itself irreducible, it is enough to show that the image is not contained in 
$\cup \bar{Z}_j$.  This follows from a dimension count: $\cup \bar{Z}_j$ has dimension equal to $g$, which 
is smaller than or equal to the dimension of $\operatorname{M}(2, e)$ by assumption.
\end{proof}

Observe that the proposition shows that there are ribbons with the property that every semi-stable rank $2$ vector bundle is the specialization of a stable generalized line bundle, and there are ribbons that do not have this property.  However, the proposition does exhaustively analyze this phenomenon: the proposition says nothing when the inequalities 
$\bar{g} \ge 2$ and $g > 4\bar{g}-3$ are both satisfied.  Thus, it would be interesting to know the answer to the following question:

\begin{question}
	Let $X$ be a ribbon such that $\bar{g} \ge 2$ and $g > 4 \bar{g} - 3$.  Does there exist an irreducible component of
$\operatorname{M}(\calO_{X}, P_d)$ whose general element is a rank $2$ vector bundle on $X_{\red}$?
\end{question}

We now prove that $\operatorname{M}(\calO_{X}, P_d)$ is connected.  To establish this, we need another
lemma about deformations of generalized line bundles.
\begin{lemma} \label{Lemma: HowGLBSpecialize II}
	Let $X$ be a ribbon and $\calI$ a generalized line bundle. If the local index sequence
	of $\calI$ is $(b_1+1, b_2, \dots, b_k)$, then $\calI$ is the specialization of a generalized line bundle
	with local index sequence $(1, b_1, b_2, \dots, b_k)$.  
\end{lemma}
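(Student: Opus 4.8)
The plan is to imitate the proof of Lemma~\ref{Lemma: HowGLBSpecialize I}: first reduce to a purely local deformation of an ideal sheaf, write down an explicit flat family of subschemes that achieves the prescribed change in local index sequence, and then transport the general case back by the transitivity result Lemma~\ref{Lemma: ActionIsTrans}. Concretely, write $b := b_1$ and let $p_0$ be the point where the local index drops from $b+1$ to $1$ (with a new point of local index $b$ produced nearby). As in the proof of Lemma~\ref{Lemma: HowGLBSpecialize I}, it suffices to treat the case $\calI = \calI_Z$, the ideal sheaf of the (unique) length-$(b+1)$ subscheme $Z$ of $X_{\text{red}}$ supported at $p_0$, together with the subschemes of lengths $b_2,\dots,b_k$ at the other points; the latter I keep constant and deform only the part over a small affine $U \ni p_0$ on which $Z|_U = (\epsilon, s^{b+1})$ with $\epsilon$ a local generator of the nilradical.

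The heart of the argument is to produce, over $k[[\alpha]]$, an ideal $\calI'_\alpha \subset \calO_U[[\alpha]]$ deforming $(\epsilon, s^{b+1})$ whose generic fiber is the ideal of a length-$1$ Cartier-type piece moving away together with a residual length-$b$ subscheme of $X_{\text{red}}$ supported at $p_0$. In the notation of Lemma~\ref{Lemma: HowGLBSpecialize I}, the generic fiber should be $(\epsilon, s^{b}) \cap (\epsilon - (\text{unit})(s - \alpha),\ s - \alpha)$, i.e. the scheme-theoretic union of a degree-$1$ ``Cartier'' divisor at $\{ s = \alpha \}$ and the degree-$b$ subscheme contained in $X_{\text{red}}$ at $p_0$; degree $b+1$ in both fibers gives flatness by the usual argument comparing degrees of the special and generic fibers. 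One then writes the generators of this deforming ideal explicitly (an analogue of the system~\eqref{Eqn: Deformation}, now with the roles of the degree-$2$ piece and the degree-$b_0$ piece replaced by a degree-$1$ piece and a degree-$b$ piece) and checks by direct computation that its restriction to $\alpha = 0$ is $(\epsilon, s^{b+1})$ and its restriction to $k[[\alpha]][\alpha^{-1}]$ is the asserted intersection. Extending by the constant deformation away from $U$ yields a flat family $\calI'_\alpha$ over $k[[\alpha]]$ with special fiber $\calI'$ and generic fiber a generalized line bundle of local index sequence $(1, b, b_2, \dots, b_k)$.

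Finally, to pass from $\calI'$ to an arbitrary $\calI$ with local index sequence $(b+1, b_2,\dots,b_k)$: by Lemma~\ref{Lemma: ActionIsTrans} there is a line bundle $\calL$ on $X$ with $\calI \cong \calI' \otimes \calL$; tensoring the family $\calI'_\alpha$ with the constant family with fiber $\calL$ gives the desired deformation of $\calI$, proving the lemma. I expect the main obstacle to be purely computational: exhibiting the explicit generators of the deforming ideal over $U$ and verifying both that the total family is $k[[\alpha]]$-flat (equivalently, that the generic and special fibers have the same length, $b+1$) and that the generic fiber is exactly the claimed intersection $(\epsilon, s^b) \cap (\epsilon - (\text{unit})(s-\alpha),\, s-\alpha)$ rather than something with extra embedded structure. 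A clean way to organize this is to first construct the generic-fiber ideal as the intersection, then take its flat closure over $k[[\alpha]]$ and identify the special fiber, rather than guessing the generators; but either route requires the same essential check.
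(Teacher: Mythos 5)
Your proposal is correct and takes essentially the paper's route: reduce via Lemma~\ref{Lemma: ActionIsTrans} to deforming the ideal $(\epsilon, s^{b_1+1})$ on an affine chart containing the chosen point, and exhibit an explicit $k[[\alpha]]$-flat family of subschemes whose generic fiber has local indices $(b_1,1)$, flatness being checked by constancy of fiber length. The only comment is that the ``computational obstacle'' you anticipate is in fact trivial: your intersection $(\epsilon, s^{b_1}) \cap (\epsilon - u(s-\alpha),\, s-\alpha)$ equals $(\epsilon,\, s^{b_1}(s-\alpha))$ (the second ideal is just $(\epsilon, s-\alpha)$, the ideal of a reduced point of $X_{\red}$ -- not a Cartier divisor of $X$, since those have even local degree, but still of local index $1$), and the paper simply takes the two-generator family $(\epsilon,\ s^{b_1}(s-\alpha))$ over $k[[\alpha]]$, whose special fiber is visibly $(\epsilon, s^{b_1+1})$; so no analogue of the elaborate system \eqref{Eqn: Deformation} is needed here, the whole deformation staying inside $X_{\red}$.
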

\begin{proof}
	The proof is similar to the proof of Lemma~\ref{Lemma: HowGLBSpecialize I}.  The essential point is to show
	that the ideal $(\epsilon, s^{b_1+1})$ can be deformed to an ideal with local index $b_1$ at $\{ \epsilon = s = 0\}$ and
	local index $1$ at a second point.  One such deformation over $k[[\alpha]]$ is given by the ideal generated by
	\begin{gather}
		\epsilon, \\
		s^{b_0}(s-\alpha). \notag
	\end{gather}
\end{proof}

\begin{theorem}\label{theorem: Conn}
	For a ribbon $X$, the  moduli space $\operatorname{M}(\calO_{X}, P_d)$ is connected.
\end{theorem}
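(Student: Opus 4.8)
The plan is to show that $\operatorname{M}(\calO_{X}, P_d)$ is connected by exhibiting every irreducible component as meeting a common "central" locus. From Theorems~\ref{Thm: LineBundleComp} and \ref{Thm: VectorComponent}, the components are of two kinds: the closures $\bar{Z}_i$ of locally closed strata $Z_{\underline b}$ of generalized line bundles (with $\underline b = (1,\dots,1)$ of the appropriate length), and at most one "vector bundle component" equal to the image of $\operatorname{M}(2,e) \to \operatorname{M}(\calO_X, P_d)$. (If $\bar{g}=0$ and $d-g$ is even the space could be empty, in which case connectedness is vacuous; otherwise it is nonempty.) So it suffices to connect all the $\bar Z_i$'s to each other, and then connect the vector bundle component (when it exists) to one of them.

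First I would handle the $\bar Z_i$'s. The key tool is Lemma~\ref{Lemma: HowGLBSpecialize II}: a generalized line bundle with local index sequence $(b_1+1, b_2, \dots, b_k)$ is a specialization of one with sequence $(1, b_1, b_2, \dots, b_k)$. Starting from the stratum $Z_{\underline b}$ with $\underline b$ a string of $1$'s of length $m$ (so index $m$), repeated use of this lemma — or rather, its converse reading: the specialization relation means a stratum with a longer all-$1$'s sequence is in the closure of data coming from a shorter one after generization, but more directly — shows that the stratum with sequence $(2, 1, 1, \dots, 1)$ (length $m-1$, index $m$) lies in $\bar Z_{i}$ where $\bar Z_i$ corresponds to the all-ones sequence of length $m$. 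Conversely, Lemma~\ref{Lemma: HowGLBSpecialize I} says a generalized line bundle with $b_{p_0} = b_0 + 2$ specializes \emph{from} one with smaller local index; read the other way, the stratum of index $m$ with a "$2$" in it specializes from (i.e. is in the closure of families whose generic member has) the all-ones stratum of index $m-2$. Chaining these relations, the point is that the "polystable"/most-degenerate generalized line bundles of maximal index $b(\calI) = 1 + g - 2\bar g$ (the strictly semi-stable ones, whose $\operatorname{Gr}$-class is described in Lemma~\ref{Lemma: FilterGLB}) lie in \emph{every} $\bar Z_i$: each $\bar Z_i$ contains, in its closure, a stratum which by specialization reaches the maximal-index locus, and the maximal-index generalized line bundles with a fixed local index sequence form a single $\operatorname{Gr}$-equivalence class, hence a single point of $\operatorname{M}(\calO_X, P_d)$. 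Thus all the $\bar Z_i$ share a common point, so their union is connected.

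Next I would connect the vector bundle component $\operatorname{M}(2,e)$, when it exists. Since $\operatorname{M}(2,e)$ is irreducible, it is enough to connect one of its points to the generalized-line-bundle locus. Take a strictly semi-stable bundle $\calE = \calO(e/2)^{\oplus 2}$-type object on $X_{\red}$, whose direct image $i_*\calE$ is $\operatorname{Gr}$-equivalent, by Lemma~\ref{Lemma: FilterGLB}, to $\operatorname{F}_1(\calI) \oplus \bar{\calI}$ for a generalized line bundle $\calI$ of index $1 + g - 2\bar g$; that $\operatorname{Gr}$-class is a single point of $\operatorname{M}(\calO_X, P_d)$ lying in the closure of the locus of strictly semi-stable generalized line bundles, hence in some $\bar Z_i$. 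So the vector bundle component meets $\bar Z_i$ at that point — \emph{provided} such a split semi-stable bundle exists, i.e. $e$ is even. When $e$ is odd (so $4\bar g - 3 \geq g$ and $\bar g \geq 2$ may force the vector bundle component to exist with only stable bundles), I would instead invoke the argument already used in the proof of Theorem~\ref{Thm: VectorComponent}: $\operatorname{Pic}^d(X)$ is non-proper once $g > \bar g$, so its closure in the projective scheme $\operatorname{M}(\calO_X, P_d)$ must pick up points of $\operatorname{M}(2,e)$, directly exhibiting a point common to a line-bundle component and the vector bundle component. (In the $\bar g = 1$, $d-g$ even case the same non-properness argument from Theorem~\ref{Thm: VectorComponent} applies verbatim.)

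\textbf{The main obstacle} I anticipate is bookkeeping the specialization relations among the strata $Z_{\underline b}$ cleanly enough to conclude that they all lie in the same connected component — in particular being careful about the directions: Lemmas~\ref{Lemma: HowGLBSpecialize I} and \ref{Lemma: HowGLBSpecialize II} produce \emph{one-parameter families}, so "$\calI$ is a specialization of $\calI'$" means $\calI$ and $\calI'$ sit in a common irreducible curve in $\operatorname{M}(\calO_X, P_d)$, hence in a common connected component; assembling a chain of such relations from any given all-ones stratum down to the maximal-index locus requires checking that each step is legitimate (the intermediate local index sequences stay within the semi-stable range $b(\calI) \le 1 + g - 2\bar g$, and one does not run out of points on $X_{\red}$ to move mass to). Once that chain is in place, the rest is formal: a finite union of irreducible sets, pairwise meeting a common point, is connected.
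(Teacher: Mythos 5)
Your overall strategy coincides with the paper's: connect the components $\bar Z_i$ through a most\--degenerate generalized line bundle using Lemmas~\ref{Lemma: HowGLBSpecialize I} and \ref{Lemma: HowGLBSpecialize II}, then attach the vector-bundle locus via Lemma~\ref{Lemma: FilterGLB} when $e$ is even and via non-properness of the Picard scheme when $e$ is odd. However, two steps need attention. The smaller one: your claim that the maximal-index generalized line bundles with a fixed local index sequence form a single $\operatorname{Gr}$-equivalence class is false. The associated graded is $\operatorname{F}_1(\calI)\oplus\bar{\calI}=i_*\bigl(\calN\otimes\bar{\calI}(D)\oplus\bar{\calI}\bigr)$, and tensoring $\calI$ by a line bundle (Lemma~\ref{Lemma: ActionIsTrans}) changes $\bar{\calI}$ and hence the class, so the maximal-index locus with fixed sequence is not one point. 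This is repairable, and the repair is what the paper does: fix a single sheaf $\calI_0$ of maximal index concentrated at one point and run your specialization chain starting from $\calI_0$, concluding that this particular point lies in every $\bar Z_i$; quantifying over the locus and then collapsing it to a point, as you wrote it, does not work.

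The genuine gap is the $e$ odd case. From the non-properness of $\operatorname{Pic}^d(X)$ you may only conclude that its closure in the projective scheme $\operatorname{M}(\calO_X,P_d)$ contains points outside the line-bundle locus; nothing forces those boundary points to lie in the image of $\operatorname{M}(2,e)$, since they could a priori all be generalized line bundles of positive index (and in general the boundary does contain such sheaves). The paper closes exactly this gap by first passing to a blow-up $f\colon X'\to X$ with $g(X')-2\bar g(X')+1=1$: there the bound $b\le 1+g-2\bar g$ together with the parity constraint of Fact~\ref{Fact: TypeRestrict} rules out all semi-stable generalized line bundles of positive index, so the only points available in the boundary of the line-bundle locus are pushforwards of stable rank $2$ bundles; one then pushes the resulting one-parameter family forward along $f$ to obtain, on $X$ itself, a family of stable generalized line bundles degenerating to a sheaf of the form $i_*\calE$. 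Your proposal omits both the reduction and the transfer step (and the parenthetical ``applies verbatim'' for $\bar g=1$, $d-g$ even hides the same reduction), so the case in which the extra vector-bundle component can actually exist is not established as written.
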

\begin{proof}
The case where $g - 2\bar{g} + 1 \leq 0$ can be dispensed with immediately.  When this inequality holds, there are no stable generalized line bundles (Thm.~\ref{Thm: StabilityThm}), and $\operatorname{M}(\calO_{X}, P_d)$
is, in fact, irreducible.  Thus, for the remainder of the proof we assume $g - 2\bar{g} + 1 > 0$. 

In this case, we prove connectivity by exhibiting a point lying in every irreducible component.  Set $e := d-g+2 \bar{g}-1$.  There are two separate cases to consider: $e$ even and $e$ odd.  When $e$ is even, there exist strictly semi-stable
generalized line bundles with local index sequence $(d-e)$. Fix one such sheaf $\calI$.   We claim that the corresponding
 point $x_0$ of $\operatorname{M}(\calO_{X}, P_d)$  lies in every irreducible component of $\operatorname{M}(\calO_{X}, P_d)$.  Using Theorem~\ref{Thm: LineBundleComp},  repeated applications of Lemmas \ref{Lemma: HowGLBSpecialize I} and \ref{Lemma: HowGLBSpecialize II} show that $x_0$ lies in every irreducible component containing a 
stable generalized line bundle.  There is at most one other irreducible component, which, when it exists, parameterizes
rank $2$ vector bundles on $X_{\text{red}}$.  In any case, $x_0$ must lie in the locus of rank $2$ vector bundles on $X_{\text{red}}$ because $\calI$ is $\operatorname{Gr}$-equivalent to such a sheaf (Lemma~\ref{Lemma: FilterGLB}). 
This proves the proposition when $e$ is even.

Now suppose that $e$ is odd.  If we replace the strictly semi-stable generalized line bundle with local index  sequence $(d-e)$ 
with a stable generalized line bundle of local index $(d-e-1)$, then a simple modification of the argument given in the
$e$ even case shows that there is a point $x_0$ lying in every irreducible component that contains
a stable generalized line bundle.  To complete the proof, we must show that the locus of stable rank $2$ vector bundles
in $\operatorname{M}(X, P_d)$ has non-empty intersection with the locus of stable generalized line bundle.  Because
there are no semi-stable rank $2$ vector bundles on $\bbP^1$, we may assume $\bar{g} \ge 1$.

Now suppose first that $g - 2 \bar{g} + 1 =1$.  Then there are at most two types of sheaves corresponding to points of 
$\operatorname{M}(\calO_{X}, P_d)$: stable line bundles of degree $d$ and stable rank $2$ vector bundles on $X_{\text{red}}$.
The locus of line bundles is not closed in $\operatorname{M}(\calO_{X}, P_d)$ because the Simpson moduli space
is projective, but the line bundle locus is not proper.  Thus, the closure of the locus of line bundles must 
contain at least one point corresponding to a rank $2$ vector bundle on $X_{\text{red}}$, which is what we 
wished to show.

When $g - 2 \bar{g} + 1>1$, we can reduce to the previous case.  Indeed, a suitable blow-up $f \colon X' \to X$
of $X$ has the property that $g(X') - 2 \bar{g}(X')+1=1$.  We have just shown that there is a family of stable line bundles on 
$X'$ specializing to a stable rank $2$ bundle on $X'_{\text{red}} = X_{\text{red}}$, and the direct image of this family under $f$ is
a family of stable generalized line bundles specializing to a rank $2$ vector bundle on $X_{\text{red}}$.  In other words,
the intersection of the locus of generalized line bundles in $\operatorname{M}(\calO_{X}, P_d)$ has non-trivial intersection
with the locus of vector bundles on $X_{\text{red}}$, completing the proof.
\end{proof}

\subsection{Local Structure} \label{Subsec: LocStr}
We now turn our attention to the local structure of the Simpson moduli space. We compute
the tangent space dimension of $\operatorname{M}(\calO_{X}, P_d)$ at a point corresponding to 
a stable sheaf and then apply this result to determine the smooth locus of 
the moduli space.    The specific tangent space computation we give is the following proposition:

\begin{proposition} \label{Prop: MainTanSpace}
	Let $x$ be a point of $\operatorname{M}_{\s}(\calO_{X}, P_d)$.  If $x$ corresponds
to a stable generalized line bundle $\calI$, then we have
\begin{displaymath}
	\dim \operatorname{T}_{x} \operatorname{M}(\calO_{X}, P_d) = g + b(\calI).
\end{displaymath}
	If $x$ corresponds to the direct image $i_* \calE$ of a stable rank $2$ vector bundle $\calE$ on
$X_{\red}$, then
	\begin{align*}
		\dim \operatorname{T}_{x} \operatorname{M}( \calO_{X}, P_d) &= 4 \bar{g} - 3 +  h^{0}(X_{\red}, \ShEnd(\calE) \otimes \calN^{-1}) \\
											&= 4 g + 5 - 8 \bar{g}  \text{ if $g \ge 4 \bar{g} - 2.$}
	\end{align*}
\end{proposition}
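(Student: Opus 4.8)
The plan is to identify the tangent space with an $\Ext^{1}$ group and then compute that group via the local--to--global spectral sequence on the curve $X$. Since $\operatorname{M}_{\s}(\calO_{X},P_d)$ étale-locally represents the functor of stable sheaves, standard deformation theory identifies
\[
	\operatorname{T}_{x}\operatorname{M}(\calO_{X},P_d)=\operatorname{T}_{x}\operatorname{M}_{\s}(\calO_{X},P_d)=\Ext^{1}_{\calO_{X}}(\calF,\calF),
\]
where $\calF$ is the stable sheaf corresponding to $x$ (the two tangent spaces agree because $\operatorname{M}_{\s}$ is open in $\operatorname{M}$). As $X$ is a curve, $H^{2}(X,-)$ vanishes, so the local--to--global spectral sequence collapses to a short exact sequence
\[
	0\longrightarrow H^{1}(X,\ShEnd(\calF))\longrightarrow\Ext^{1}_{\calO_{X}}(\calF,\calF)\longrightarrow H^{0}(X,\ShExt^{1}_{\calO_{X}}(\calF,\calF))\longrightarrow 0,
\]
using $\underline{\Hom}_{\calO_{X}}(\calF,\calF)=\ShEnd(\calF)$. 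It therefore suffices to compute the two outer terms in each of the two cases of Proposition~\ref{Prop: PureSh}.

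For a stable generalized line bundle $\calI$, stability gives $h^{0}(X,\ShEnd(\calI))=\dim\End(\calI)=1$, and the exact sequence $0\to\calO_{X}\to\ShEnd(\calI)\to\ShEnd(\calI)/\calO_{X}\to 0$ (the first map being the injection of Lemma~\ref{Lemma:  EndToGLB}, the quotient having length $b(\calI)$) gives $\chi(\ShEnd(\calI))=1-g+b(\calI)$, hence $h^{1}(X,\ShEnd(\calI))=g-b(\calI)$. The sheaf $\ShExt^{1}_{\calO_{X}}(\calI,\calI)$ is supported on the finite set of points where $\calI$ is not locally free, so it is a skyscraper sheaf of length $\sum_{p}\operatorname{len}_{\widehat\calO_{X,p}}\Ext^{1}_{\widehat\calO_{X,p}}(\widehat\calI_{p},\widehat\calI_{p})$; after completion $\widehat\calI_{p}\cong I_{n}$ with $n=b_{p}(\calI)$. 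The one remaining ingredient is the purely local identity $\operatorname{len}_{\calO_{0}}\Ext^{1}_{\calO_{0}}(I_{n},I_{n})=2n$, which I would establish by applying $\Hom_{\calO_{0}}(-,I_{n})$ to the $2$-periodic free resolution \eqref{Eqn: FreeRes} and computing the kernel and image of the resulting endomorphism of $I_{n}^{2}$. Summing over $p$ gives $h^{0}(X,\ShExt^{1}_{\calO_{X}}(\calI,\calI))=2b(\calI)$, and the two terms add to $g+b(\calI)$.

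For $\calF=i_{*}\calE$, adjunction for the closed immersion $i$ gives $\ShEnd_{\calO_{X}}(i_{*}\calE)=i_{*}\ShEnd_{\calO_{X_{\red}}}(\calE)$, so $h^{1}(X,\ShEnd(i_{*}\calE))=h^{1}(X_{\red},\ShEnd(\calE))$; since $\ShEnd(\calE)$ has rank $4$ and degree $0$ and $h^{0}=\dim\End(\calE)=1$ by stability, Riemann--Roch on $X_{\red}$ gives $h^{1}(X_{\red},\ShEnd(\calE))=4\bar g-3$. To compute the sheaf-$\Ext$ term I would choose a vector bundle $\widetilde\calE$ on $X$ with $i^{*}\widetilde\calE\cong\calE$ (the obstruction lies in $H^{2}(X_{\red},\ShEnd(\calE)\otimes\calN)=0$); the kernel of the restriction $\widetilde\calE\twoheadrightarrow i_{*}\calE$ is $i_{*}(\calE\otimes\calN)$, and applying $\underline{\Hom}_{\calO_{X}}(-,i_{*}\calE)$ to $0\to i_{*}(\calE\otimes\calN)\to\widetilde\calE\to i_{*}\calE\to 0$, together with $\ShExt^{1}_{\calO_{X}}(\widetilde\calE,i_{*}\calE)=0$ and the adjunction identities, should yield an isomorphism $\ShExt^{1}_{\calO_{X}}(i_{*}\calE,i_{*}\calE)\cong i_{*}\bigl(\ShEnd(\calE)\otimes\calN^{-1}\bigr)$. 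Hence $h^{0}(X,\ShExt^{1}_{\calO_{X}}(i_{*}\calE,i_{*}\calE))=h^{0}(X_{\red},\ShEnd(\calE)\otimes\calN^{-1})$, which is the first displayed equality. For the second, when $g\ge 4\bar g-2$ I would prove the vanishing $h^{1}(X_{\red},\ShEnd(\calE)\otimes\calN^{-1})=0$: by Serre duality it equals $h^{0}(X_{\red},\ShEnd(\calE)\otimes\calN\otimes K_{X_{\red}})$, and a nonzero section there is a nonzero homomorphism $\calE\to\calE\otimes\calN\otimes K_{X_{\red}}$ between stable bundles of the same rank whose target has strictly smaller slope (because $\deg(\calN\otimes K_{X_{\red}})=4\bar g-3-g<0$), which is impossible. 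Riemann--Roch then gives $h^{0}(X_{\red},\ShEnd(\calE)\otimes\calN^{-1})=4g+8-12\bar g$, and adding $4\bar g-3$ yields $4g+5-8\bar g$.

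I expect the main obstacle to be the identification $\ShExt^{1}_{\calO_{X}}(i_{*}\calE,i_{*}\calE)\cong i_{*}(\ShEnd(\calE)\otimes\calN^{-1})$ — getting the twist by $\calN^{-1}$ correct by carefully tracking how the nilpotent direction enters the free resolution — together with, to a lesser extent, the local length computation $\Ext^{1}_{\calO_{0}}(I_{n},I_{n})\cong k^{2n}$; both are elementary but need care, and the uniform statement for all degrees $d$ comes for free since nothing in the argument depends on $d$ beyond the existence of the stable sheaf in question.
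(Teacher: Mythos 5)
Your proposal is correct, and the generalized-line-bundle case is essentially the paper's argument: the same local-to-global short exact sequence $H^{1}(X,\ShEnd(\calI))\hookrightarrow\Ext^{1}(\calI,\calI)\twoheadrightarrow H^{0}(X,\ShExt^{1}(\calI,\calI))$, the same local length computation $\dim\Ext^{1}_{\calO_{0}}(I_{n},I_{n})=2n$ from the periodic resolution, and only a cosmetic difference in how $h^{1}(X,\ShEnd(\calI))=g-b(\calI)$ is obtained (you use simplicity of the stable sheaf plus $\chi(\ShEnd(\calI))=1-g+b(\calI)$; the paper identifies $\ShEnd(\calI)\cong f_{*}\calO_{X'}$ and computes $h^{1}(X',\calO_{X'})$, which also yields the formula without assuming stability). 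Where you genuinely diverge is the case $\calF=i_{*}\calE$. The paper passes through adjunction, $\Ext^{1}(i_{*}\calE,i_{*}\calE)\cong\Ext^{1}(i^{*}i_{*}\calE,\calE)$, and a Grothendieck spectral sequence for the composite functor, whose low-degree exact sequence together with $\Tor_{1}(i_{*}\calO_{X_{\red}},i_{*}\calE)\cong\calE\otimes\calN$ and $\Ext^{2}_{X_{\red}}=0$ gives $\dim\Ext^{1}=\dim\Ext^{1}(\calE,\calE)+\hom(\calN\otimes\calE,\calE)$. You instead reuse the local-to-global sequence on $X$ and compute the sheaf term by lifting $\calE$ to a vector bundle $\widetilde\calE$ on the ribbon (the obstruction in $H^{2}(X_{\red},\ShEnd(\calE)\otimes\calN)$ vanishes on a curve) and applying $\underline{\operatorname{Hom}}_{\calO_{X}}(-,i_{*}\calE)$ to $0\to i_{*}(\calE\otimes\calN)\to\widetilde\calE\to i_{*}\calE\to 0$; since restriction induces an isomorphism on $\underline{\operatorname{Hom}}(-,i_{*}\calE)$ at the first stage and $\ShExt^{1}(\widetilde\calE,i_{*}\calE)=0$, the connecting map gives $\ShExt^{1}(i_{*}\calE,i_{*}\calE)\cong i_{*}(\ShEnd(\calE)\otimes\calN^{-1})$, and the count matches. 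Your route is uniform across the two cases and avoids the composite-functor spectral sequence, at the cost of choosing a lift $\widetilde\calE$ and tracking the $\calN^{-1}$-twist; the paper's adjunction argument needs no such choice and produces the $\Hom(\calN\otimes\calE,\calE)$ term directly. The final vanishing step for $g\ge 4\bar g-2$ is the same in substance: your Serre-duality/slope argument ($\Hom(\calE,\calE\otimes\calN\otimes K_{X_{\red}})=0$ since the target has strictly smaller slope) is exactly what underlies the paper's appeal to semi-stability of $\ShEnd(\calE)\otimes\calN^{-1}$, and both then conclude by Riemann--Roch.
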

The proof of the proposition is broken up into two separate lemmas: one computing the dimension when $x$ is
a generalized line bundle (Lemma~\ref{Lemma: TanAtGLB}) and one when $x$ corresponds to a rank $2$ vector
bundle on $X_{\text{red}}$ (Lemma~\ref{Lemma:  TanAtVec}).  In both cases, the starting point is the
identification of the tangent space with the cohomology group $\Ext^{1}(\calF, \calF)$.  Let us begin by 
recalling how this identification works.

Elements of the tangent space are in natural bijection with morphisms $\Spec(k[\alpha]/(\alpha^2)) \to \operatorname{M}(\calO_{X}, P_d)$ sending the closed point to $x$.  Because $x$ lies in the stable locus, these morphisms in turn are in natural bijection with first-order deformations of $\calF$ (i.e. deformations over
$k[\alpha]/(\alpha^2)$).  The isomophism $\operatorname{T}_{x} \operatorname{M}(\calO_{X}, P_d) \cong \Ext^{1}(\calF, \calF)$ is constructed by exhibiting a bijection between first-order deformations of $\calF$ and elements of $\Ext^{1}(\calF, \calF)$.  

If $\calF_1$ is a deformation of $\calF$, then tensoring the short exact sequence
$k \cong (\alpha) \hookrightarrow k[\alpha]/(\alpha^2) \twoheadrightarrow k$ with $\calF_1$ yields the sequence 
\begin{equation}
	\calF \cong \alpha \cdot \calF_1 \hookrightarrow \calF_1 \twoheadrightarrow \calF_1/ \alpha \cdot \calF_1 \cong \calF.
\end{equation}
This is an extension of $\calF$ by $\calF$, and hence defines an element $c(\calF_1)$ of $\Ext^{1}(\calF, \calF)$. One shows that the rule $\calF_1 \mapsto c(\calF_1)$ 
is well-defined and the induced map $$\operatorname{T}_{x} \operatorname{M}(\calO_{X}, P_d) \to \Ext^{1}(\calF, \calF)$$ is a bijection.  As an aside, we remark that when $x$ is strictly semi-stable, there is no longer a canonical identification $\operatorname{T}_{x} \operatorname{M}( \calO_{X}, P_d) \cong \Ext^{1}(\calF, \calF)$.
Rather, there is a more complicated description of the tangent space involving a natural action of $\Aut(\calF)$ on $\Ext^{1}(\calF, \calF)$.  

We now compute $\Ext^{1}(\calI, \calI)$ for $\calI$ a generalized line bundle.
\begin{lemma} \label{Lemma:  TanAtGLB}
	If $\calI$ is a generalized line bundle, then we have 
	\begin{equation} \label{Eqn: TanAtGLB I}		
		\operatorname{dim}( \Ext^{1}(\calI, \calI) ) = g + b(\calI) + h^{0}(X', \calO_{X'}) - 1,
	\end{equation}
	where $X'$ is the associated blow-up of $\calI$.  
	
	If we additionally assume $2 \bar{g} \le g$, then this formula simplifies to 
	\begin{equation} \label{Eqn: TanAtGLB II}
		\operatorname{dim}( \Ext^{1}(\calI, \calI) ) = g + b(\calI).
	\end{equation}		
\end{lemma}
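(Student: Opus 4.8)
The plan is to compute $\Ext^{1}_{X}(\calI,\calI)$ directly (stability of $\calI$ is not needed here). Since $X$ is a curve, in the local-to-global spectral sequence $H^{p}(X,\ShExt^{q}_{X}(\calI,\calI)) \Rightarrow \Ext^{p+q}_{X}(\calI,\calI)$ the group $H^{2}$ of every coherent sheaf vanishes, so the differential out of $H^{0}(X,\ShExt^{1}_{X}(\calI,\calI))$ is zero and one obtains a short exact sequence
\[
	0 \longrightarrow H^{1}\!\left(X,\ShEnd(\calI)\right) \longrightarrow \Ext^{1}_{X}(\calI,\calI) \longrightarrow H^{0}\!\left(X,\ShExt^{1}_{X}(\calI,\calI)\right) \longrightarrow 0 .
\]
It therefore suffices to compute the dimensions of the two outer terms.

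For the left-hand term, write $\calI = f_{*}(\calL')$ for the associated blow-up $f \colon X' \to X$ and a line bundle $\calL'$ on $X'$ (\cite[Thm.~1.1]{green95}). As $\calI$ is a module over the sheaf of rings $f_{*}\calO_{X'}$ and $\calL'$ is invertible, multiplication gives an injection $f_{*}\calO_{X'} \hookrightarrow \ShEnd(\calI)$ lying over the natural inclusion $\calO_{X} \hookrightarrow \ShEnd(\calI)$. Now $f_{*}\calO_{X'}/\calO_{X} \cong \calO_{D}$ has length $\deg D = b(\calI)$, whereas $\ShEnd(\calI)/\calO_{X}$ has length $b(\calI)$ by the intrinsic description of the index recalled just after Definition~\ref{Def: Local}; so the inclusion $f_{*}\calO_{X'} \hookrightarrow \ShEnd(\calI)$ is an isomorphism. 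Since $f$ is finite, $H^{1}(X,\ShEnd(\calI)) = H^{1}(X,f_{*}\calO_{X'}) = H^{1}(X',\calO_{X'})$, and because $g(X') = g - b(\calI)$, the definition of the genus gives $h^{1}(X',\calO_{X'}) = h^{0}(X',\calO_{X'}) - 1 + g - b(\calI)$.

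For the right-hand term, $\ShExt^{1}_{X}(\calI,\calI)$ is a finite-length sheaf supported at the points $p$ where $\calI$ is not locally free, i.e. where $b_{p}(\calI) > 0$. At such a point the completed stalk of $\calI$ is isomorphic to $I_{n}$ over $\calO_{0} = k[[s,\epsilon]]/(\epsilon^{2})$ with $n = b_{p}(\calI)$, and I would compute the local $\Ext^{1}$ from the $2$-periodic free resolution \eqref{Eqn: FreeRes}: applying $\Hom_{\calO_{0}}(-,I_{n})$ and using that the square of the differential of \eqref{Eqn: FreeRes} vanishes, $\Ext^{1}_{\calO_{0}}(I_{n},I_{n})$ is the cohomology of the resulting period-$2$ complex, which a short explicit calculation identifies with $(k[[s]]/(s^{n}))^{\oplus 2}$, of length $2n$. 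Summing over the points $p$ gives $h^{0}(X,\ShExt^{1}_{X}(\calI,\calI)) = 2\,b(\calI)$, and feeding both computations into the short exact sequence yields $\dim \Ext^{1}_{X}(\calI,\calI) = \bigl(h^{0}(X',\calO_{X'}) - 1 + g - b(\calI)\bigr) + 2\,b(\calI) = g + b(\calI) + h^{0}(X',\calO_{X'}) - 1$, which is \eqref{Eqn: TanAtGLB I}.

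For the simplification \eqref{Eqn: TanAtGLB II}, note that $X'$ is again a ribbon with $X'_{\red} = X_{\red}$, so the conormal sequence \eqref{Eq: Conormal} for $X'$ gives $h^{0}(X',\calO_{X'}) = 1 + h^{0}(X',\calN_{X'})$, and $\deg\calN_{X'} = 2\bar{g} - 1 - g(X') = b(\calI) + 2\bar{g} - 1 - g$. Under the hypothesis of \eqref{Eqn: TanAtGLB II} this degree is negative (it is so as soon as $b(\calI) \le g - 2\bar{g}$, in particular whenever $\calI$ is stable), hence $H^{0}(X',\calN_{X'}) = 0$ and \eqref{Eqn: TanAtGLB I} collapses to $g + b(\calI)$. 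The step I expect to be the main obstacle is the local input to the right-hand term --- both the identification $\ShEnd(\calI) \cong f_{*}\calO_{X'}$ and, above all, the period-$2$ computation of $\Ext^{1}_{\calO_{0}}(I_{n},I_{n})$; the rest is bookkeeping with Euler characteristics.
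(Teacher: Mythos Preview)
Your argument is correct and follows the same route as the paper: the local-to-global spectral sequence reduces the computation to $H^{1}(X,\ShEnd(\calI))$ and $H^{0}(X,\ShExt^{1}(\calI,\calI))$, the first is identified with $H^{1}(X',\calO_{X'})$ via $\ShEnd(\calI)\cong f_{*}\calO_{X'}$, and the second is computed locally from the periodic resolution~\eqref{Eqn: FreeRes} to give $2b_{p}(\calI)$ at each point. Your length-comparison justification of $\ShEnd(\calI)\cong f_{*}\calO_{X'}$ is a nice touch; the paper simply asserts this isomorphism.

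One point worth flagging: for the simplification~\eqref{Eqn: TanAtGLB II} you correctly observe that $\deg\calN_{X'}=b(\calI)+2\bar{g}-1-g$, so negativity requires $b(\calI)\le g-2\bar{g}$, not merely $2\bar{g}\le g$. The paper's proof is terse here (it speaks of ``the nilradical'' having negative degree), and indeed the lemma is only ever applied to \emph{stable} generalized line bundles (Proposition~\ref{Prop: MainTanSpace}), for which $b(\calI)\le g-2\bar{g}$ holds automatically. Your parenthetical remark makes this dependence explicit, which is a genuine clarification.
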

\begin{proof}
	An inspection of the local-to-global spectral sequence 
$$H^{p}( X, \ShExt^{q}(\calI, \calI)) \Rightarrow \Ext^{p+q}(\calF, \calF)$$ computing $\Ext$ shows that there is a 
short exact sequence
\begin{displaymath}
	H^{1}(X, \ShEnd(\calI)) \hookrightarrow \Ext^{1}(\calI, \calI) \twoheadrightarrow  H^{0}(X, \ShExt^{1}(\calI, \calI)).
\end{displaymath}
We prove the proposition by computing the right-most term and the left-most term in the above sequence. If we write 
$\calI = f_{*}(\calI')$ for a line bundle $\calI'$ on a blow-up $f \colon X' \to X$, then the canonical map 
$f_{*}(\calO_{X'}) \to \ShEnd(\calI)$ is an isomorphism.  As a consequence, 
$H^{1}(X, \ShEnd(\calI)) = H^{1}(X', \calO_{X'})$, which is of dimension $g-b(\calI) + h^{0}(X', \calO_{X'})-1$.  

We also need to compute $H^{0}(X, \ShExt^{1}( \calI, \calI))$.  The sheaf $\ShExt^{1}( \calI, \calI)$ is supported on the 
points $p$ satisfying $b_{p}(\calI)>0$.   If we label these points as $p_1, \dots, p_n$, then the space of global sections decomposes as
\begin{displaymath}
	H^{0}(X, \ShExt^{1}(\calI, \calI)) = \oplus_{j=1}^{n} \Ext^{1}(\widehat{\calI}_{p_j},\widehat{\calI}_{p_j}),
\end{displaymath}
where $\widehat{\calI}_{p_j}$ is the restriction of $\calI$ to the completed local ring $\widehat{\calO}_{X, p_j}$. 
Using the free resolution \eqref{Eqn: FreeRes}, one computes $\dim \Ext^{1}(\widehat{\calI}_{p_j},\widehat{\calI}_{p_j}) = 
2 b_{p_j}(\calI)$.  Summing over all $j$, we have
\begin{align*}
	\dim \Ext^{1}(\calI, \calI) &= (g - b(\calI) + h^{0}(X', \calO_{X'})-1)+ (2 b_{p_1}(\calI) + \dots + 2 b_{p_n}(\calI)) \\
					&= g + b(\calI) + h^{0}(X', \calO_{X'})-1.
\end{align*}
	This establishes \eqref{Eqn: TanAtGLB I}.  When $2 \bar{g} \le g$, the nilradical has negative degree, and hence no non-zero global sections.  In particular, $h^{0}(X', \calO_{X'}) -1=0$, proving that \eqref{Eqn: TanAtGLB II} holds.
\end{proof}

We now turn our attention to sheaves of the form $\calF = i_* \calE$.
\begin{lemma} \label{Lemma:  TanAtVec}
	Let $\calE$ be a stable rank $2$ vector bundle on $X_{\red}$.  Then we have
	\begin{equation}
		\dim \Ext^{1}( i_*\calE, i_*\calE) = 4 \bar{g} - 3 + h^{0}(X_{\red}, \calN^{-1} \otimes \ShEnd(\calE)).
	\end{equation}
	If we additionally assume $g \ge 4 \bar{g} - 2$, then this formula simplifies to
	\begin{equation} \label{Eqn: BigGDim}
		\dim \Ext^{1}( i_*\calE, i_* \calE) = 4 g + 5 - 8 \bar{g}.
	\end{equation}

\end{lemma}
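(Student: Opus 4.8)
The plan is to compute $\Ext^1_X(\calF,\calF)$ for $\calF = i_*\calE$ by feeding the two relevant local $\ShExt$-sheaves into the local-to-global spectral sequence $H^p(X,\ShExt^q_X(\calF,\calF)) \Rightarrow \Ext^{p+q}_X(\calF,\calF)$. Since $\calF$ is supported on the curve $X_{\red}$, so is $\ShEnd_X(\calF)$; hence $H^2(X,\ShEnd_X(\calF)) = 0$ and the low-degree five-term exact sequence collapses to
\begin{displaymath}
	0 \to H^1(X,\ShEnd_X(\calF)) \to \Ext^1_X(\calF,\calF) \to H^0(X,\ShExt^1_X(\calF,\calF)) \to 0.
\end{displaymath}
So it suffices to identify $\ShEnd_X(i_*\calE)$ and $\ShExt^1_X(i_*\calE,i_*\calE)$ as sheaves on $X_{\red}$ and then compute their cohomology.

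For the first: because $\calN$ annihilates $i_*\calE$, every $\calO_X$-linear endomorphism is automatically $\calO_{X_{\red}}$-linear, so $\ShEnd_X(i_*\calE) = i_*\ShEnd_{X_{\red}}(\calE)$. For the second, I would first treat the case $\calE = \calO_{X_{\red}}$: applying $\calHom_X(-,\calO_{X_{\red}})$ to $\calN \hookrightarrow \calO_X \twoheadrightarrow \calO_{X_{\red}}$ and using $\calN^2 = 0$ (so that the restriction map $\calO_{X_{\red}} = \calHom_X(\calO_X,\calO_{X_{\red}}) \to \calHom_X(\calN,\calO_{X_{\red}})$ vanishes, any $\phi$ sending $n\in\calN$ to $n\cdot\phi(1)\in\calN\cdot\calO_{X_{\red}}=0$) identifies $\ShExt^1_X(\calO_{X_{\red}},\calO_{X_{\red}})$ with $\calHom_{\calO_{X_{\red}}}(\calN,\calO_{X_{\red}}) = \calN^{-1}$. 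Twisting by the locally free sheaf $\calE$ on $X_{\red}$ — which may be checked locally, where $\calE$ is free and a periodic free resolution analogous to $\eqref{Eqn: FreeRes}$ applies — then gives
\begin{displaymath}
	\ShExt^1_X(i_*\calE,i_*\calE) = i_*\big(\calN^{-1} \otimes_{\calO_{X_{\red}}} \ShEnd_{X_{\red}}(\calE)\big).
\end{displaymath}
The short exact sequence above now reads $\dim\Ext^1_X(i_*\calE,i_*\calE) = h^1(X_{\red},\ShEnd(\calE)) + h^0(X_{\red},\calN^{-1}\otimes\ShEnd(\calE))$.

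It remains to evaluate the two terms by Riemann--Roch on the smooth curve $X_{\red}$. Since $\calE$ is stable it is simple, so $h^0(X_{\red},\ShEnd(\calE)) = 1$; as $\ShEnd(\calE)$ has rank $4$ and degree $0$, $\chi(\ShEnd(\calE)) = 4(1-\bar g)$, hence $h^1(X_{\red},\ShEnd(\calE)) = 4\bar g - 3$, which yields the first asserted formula. For the second, assume $g \ge 4\bar g - 2$. Then $\deg(\calN\otimes K_{X_{\red}}) = (2\bar g - 1 - g) + (2\bar g - 2) \le -1$, so a nonzero global section of the Serre dual $\calN\otimes K_{X_{\red}}\otimes\ShEnd(\calE)$ of $\calN^{-1}\otimes\ShEnd(\calE)$ would be a nonzero homomorphism from the stable bundle $\calE$ to the stable bundle $\calE\otimes\calN\otimes K_{X_{\red}}$, whose slope is strictly smaller — impossible. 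Hence $h^1(X_{\red},\calN^{-1}\otimes\ShEnd(\calE)) = 0$, so $h^0(X_{\red},\calN^{-1}\otimes\ShEnd(\calE)) = \chi = 4\deg(\calN^{-1}) + 4(1-\bar g) = 4g + 8 - 12\bar g$; adding $4\bar g - 3$ gives $4g + 5 - 8\bar g$, which is $\eqref{Eqn: BigGDim}$. The step I expect to be the crux is the identification of $\ShExt^1_X(i_*\calE,i_*\calE)$ with $\calN^{-1}\otimes\ShEnd_{X_{\red}}(\calE)$ — in particular pinning down the line-bundle twist and its sign, which forces one to pass to the completed local rings and use a periodic resolution in the spirit of $\eqref{Eqn: FreeRes}$. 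The remaining ingredients (the spectral sequence, the Riemann--Roch bookkeeping, and the vanishing of homomorphisms from a stable bundle into a bundle of strictly smaller slope) are all standard.
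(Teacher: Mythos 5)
Your proof is correct, but it follows a different route from the paper's. You run the local-to-global spectral sequence on $X$ (the tool the paper reserves for the generalized line bundle case, Lemma~\ref{Lemma:  TanAtGLB}), identifying $\ShEnd_{\calO_X}(i_*\calE)=i_*\ShEnd_{X_{\red}}(\calE)$ and, via the square-zero/conormal computation, $\ShExt^1_{\calO_X}(i_*\calE,i_*\calE)\cong i_*\bigl(\calN^{-1}\otimes\ShEnd_{X_{\red}}(\calE)\bigr)$; the paper instead uses adjunction to replace $\Ext^1_X(i_*\calE,i_*\calE)$ by $\Ext^1_{X_{\red}}(i^*i_*\calE,\calE)$ and a Grothendieck (change-of-rings) spectral sequence, whose low-degree terms give $0\to\Ext^1(\calE,\calE)\to\Ext^1(i_*\calE,i_*\calE)\to\Hom(\calN\otimes\calE,\calE)\to 0$ using $\Ext^2=0$ on the smooth curve and $\Tor_1(i_*\calO_{X_{\red}},i_*\calE)\cong\calN\otimes\calE$. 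The two short exact sequences are term-by-term the same ($H^1(\ShEnd\calE)=\Ext^1(\calE,\calE)$ and $H^0(\calN^{-1}\otimes\ShEnd\calE)=\Hom(\calN\otimes\calE,\calE)$), so the bookkeeping agrees; your crux really is pinning down the twist in $\ShExt^1$, which your local periodic resolution plus the conormal-sequence argument does handle (the local change-of-rings spectral sequence would make the globalization fully canonical). For the case $g\ge 4\bar g-2$, your vanishing argument is also different and arguably more economical: you apply Serre duality and observe that a nonzero section of $\calN\otimes K_{X_{\red}}\otimes\ShEnd(\calE)$ would be a nonzero map from the stable bundle $\calE$ to the stable bundle $\calE\otimes\calN\otimes K_{X_{\red}}$ of strictly smaller slope, which is impossible; the paper instead invokes semi-stability of the tensor product $\ShEnd(\calE)\otimes\calN^{-1}$ (citing Lazarsfeld) before applying Serre duality and Riemann--Roch. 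Your variant avoids the tensor-product semistability theorem (a nontrivial input, and one that is delicate in positive characteristic), using only that twisting a stable bundle by a line bundle preserves stability, so it buys a slightly more elementary and characteristic-robust argument; the paper's version generalizes more readily if one wants bounds for higher-rank endomorphism-type bundles.
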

\begin{proof}
	Like the previous lemma, this is proven using a spectral sequence argument.   For any two
	$\calO_{X_{\text{red}}}$-modules $\calA$ and $\calB$, adjunction provides a canonical 
	identification $\Hom_{\calO_{X}}(i_* \calA, i_* \calB) = \Hom_{\calO_{X_{\text{red}}}}( i^{*} i_{*} \calA, \calB)$,
	hence the groups $\Ext^{n}( i_* \calA, i_* \calB) $ and $\Ext^{n}( i^{*} i_{*} \calA,\calB)$ are isomorphic. We
	compute $\Ext^{1}( i_* \calE, i_* \calE)$ by working with a spectral sequence describing $\Ext^{1}( i^{*}i_{*} \calE, \calE)$.

	The functor $\Hom( i^{*} i_{*} \underline{\phantom{\calE}}, \calE)$ is the composition of the functors
	$G := i^{*} i_{*}$ and $F := \Hom_{\calO_{X_{\text{red}}}}(\underline{\phantom{\calE}}, \calE)$, so there  is
	a Grothendieck spectral sequence:
	\begin{displaymath}
		\Ext^{p}( \Tor_{q}( i_{*}(\calO_{X_{\text{red}}}), i_* \calE), \calE) \Rightarrow \Ext^{p+q}( i^{*}i_{*} \calE, \calE). 
	\end{displaymath}
	The first four terms of the associated exact sequence of low degree terms are:
	\begin{equation} \label{Eqn: LowTermSeq}
		\Ext^{1}(\calE, \calE) \hookrightarrow \Ext^{1}( i^{*} i_*\calE, \calE) \to \Hom( \Tor_{1}( i_* \calO_{X_{\text{red}}}), \calE) \to
		\Ext^{2}(\calE, \calE).
	\end{equation}
	Because $X_{\text{red}}$ is a non-singular curve, the last term $\Ext^{2}(\calE, \calE)$ vanishes.  We can also compute the
	second-to-last term.  Associated to the short exact sequence $\calN \hookrightarrow \calO_{X} \twoheadrightarrow \calO_{X_{\text{red}}}$
	is a long exact sequence of $\operatorname{Tor}(\underline{\phantom{\calE}}, i_*\calE)$-groups whose connecting 	
	map 
	\begin{displaymath}
		\partial \colon \Hom( \calN \otimes \calE, \calE) \to \Hom( \Tor_{1}( i_* \calO_{X_{\text{red}}}, i_{*}\calE))
	\end{displaymath}
	is an isomorphism.  We now compute the dimension of $\Ext^{1}( i_* \calE, i_* \calE)$ using Sequence~\eqref{Eqn: LowTermSeq}:
	\begin{align*}
		\dim \Ext^{1}( i_* \calE, i_* \calE) 	&= \dim \Ext^{1}( i^{*} i_*\calE, \calE) \\
							&= \dim \Ext^{1}(\calE, \calE) + \dim \Hom( \calN \otimes \calE, \calE) \\
							&= 4\bar{g} - 3 + h^{0}(X_{\text{red}},  \calN^{-1} \otimes \ShEnd(\calE)).
	\end{align*} 
	The group $\Ext^{1}(\calE, \calE)$ is computed by the Riemann--Roch formula. (Note: $\Hom(\calE, \calE)$ is 1-dimensional as $\calE$ is stable.)  This proves the first part of the proposition.

	Now assume $g \ge 4 \bar{g} - 2$.  Because $\calE$ is stable, the rank $4$ vector bundle $\ShEnd(\calE) \otimes \calN^{-1}$ is semi-stable (\cite[Cor.~6.4.14]{rlaz}) of  degree  $4+4g-8\bar{g}$. Using Serre duality on $X_{\red}$, one checks  
	that the higher cohomology of this bundle vanishes.  Formula~\eqref{Eqn: BigGDim}
	now follows from the Riemann-Roch formula, completing the proof.
\end{proof}

 One immediate corollary of the proposition is the following:
\begin{corollary} \label{Cor: SmoothLocus}
	Let $X$ be a ribbon.  If $\bar{g} \ge 2$ and $g \ge 4 \bar{g} - 2$, then the smooth locus of $\operatorname{M}(\calO_{X}, P_d)$ is equal to the open subset parameterizing line bundles on $X$.
\end{corollary}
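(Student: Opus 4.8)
The plan is to combine the tangent space formula of Proposition~\ref{Prop: MainTanSpace} with the component list of Theorems~\ref{Thm: LineBundleComp} and \ref{Thm: VectorComponent}. Write $M:=\operatorname{M}(\calO_{X},P_d)$. The hypotheses $\bar g\ge 2$ and $g\ge 4\bar g-2$ give $g>2\bar g-1$ and $g>4\bar g-3$, so those theorems say that every irreducible component of $M$ is one of the loci $\bar Z_i$, of dimension $g$, or the single additional component, of dimension $4\bar g-3<g$. Hence $\dim_xM\le g$ for every $x\in M$, with equality whenever $x$ lies on some $\bar Z_i$. Since a closed point $x$ of the finite-type $k$-scheme $M$ is smooth exactly when $\dim_k\operatorname{T}_xM=\dim_xM$ (and $\dim_k\operatorname{T}_xM\ge\dim_xM$ always), it suffices to show that the line bundle locus $U\subseteq M$ is open with $\dim_k\operatorname{T}_xM=g$ on it, while $\dim_k\operatorname{T}_xM>g$ at every semistable sheaf that is not a line bundle on $X$.

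First I would check that $U$ is open and smooth. By Lemma~\ref{Lemma: GLBIsOpen} the generalized-line-bundle locus is open in $M$, and inside it the locus where the index vanishes is open since the index can only increase under specialization (proof of Lemma~\ref{Lemma: FixedTypeLocus}). A degree-$d$ line bundle $\calL$ on $X$ — which by Fact~\ref{Fact: TypeRestrict} exists only when $d$ is even — is stable with $b(\calL)=0$, so Proposition~\ref{Prop: MainTanSpace} gives $\dim_k\operatorname{T}_{[\calL]}M=g$; as $[\calL]\in\bar Z_1$ we get $\dim_{[\calL]}M=g$, hence $[\calL]$ is smooth and $U\subseteq M^{\mathrm{sm}}$. (When $d$ is odd, $U=\emptyset$ and the assertion is that $M$ has empty smooth locus.) Next, the stable non-line-bundle points: if $x$ is stable and corresponds to a generalized line bundle $\calI$ with $b(\calI)\ge1$, then $\dim_k\operatorname{T}_xM=g+b(\calI)>g\ge\dim_xM$; if $x=[i_*\calE]$ with $\calE$ a stable rank-$2$ bundle on $X_{\red}$, then (using $g\ge 4\bar g-2$) Proposition~\ref{Prop: MainTanSpace} gives $\dim_k\operatorname{T}_xM=4g+5-8\bar g$, and $4g+5-8\bar g-g=3g+5-8\bar g\ge 3(4\bar g-2)+5-8\bar g=4\bar g-1>0$. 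In both cases $x$ is singular.

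The remaining, and hardest, case is a strictly semistable point $x$, where Proposition~\ref{Prop: MainTanSpace} does not apply directly. A strictly semistable generalized line bundle has index $1+g-2\bar g$, which by Fact~\ref{Fact: TypeRestrict} forces $d-g$ odd, so this case is vacuous unless $d-g$ is odd; assume so. By Lemma~\ref{Lemma: FilterGLB} and the classification of strictly semistable sheaves, the $\operatorname{Gr}$-representative of $x$ is $i_*(L_1\oplus L_2)$ with $\deg L_1=\deg L_2=(d+2\bar g-g-1)/2$, which is not a line bundle, so $x\notin U$. The line bundle $N':=L_2\otimes L_1^{-1}\otimes\calN^{-1}$ on $X_{\red}$ has degree $g+1-2\bar g\ge 3$, and $h^{0}(N')=g+2-3\bar g\ge\bar g\ge2$ by Riemann--Roch, the $h^{1}$ vanishing because $\deg(K_{X_{\red}}\otimes N'^{-1})=4\bar g-3-g<0$. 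A linear system of degree $\ge2$ and positive dimension always contains a non-reduced member, so we may choose an effective divisor $D$ in the class of $N'$ with a point of multiplicity $\ge2$. By the Green--Eisenbud classification together with Lemma~\ref{Lemma: ActionIsTrans}, there is a generalized line bundle $\calI$ with associated blow-up $\operatorname{Bl}_DX\to X$ and $\bar\calI\cong L_1$; then $\operatorname{F}_1(\calI)=\calN\otimes\bar\calI(D)\cong L_2$, so $\operatorname{Gr}(\calI)\cong i_*(L_1\oplus L_2)$, i.e. $[\calI]=x$. Since $\calI$ has a local index $\ge2$, Lemma~\ref{Lemma: HowGLBSpecialize I} shows $x$ lies in the closure of $Z_{\underline b'}$, where $\underline b'$ is obtained from the local index sequence of $\calI$ by lowering one entry by $2$, so $\sum b'_j=g-1-2\bar g$. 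By Proposition~\ref{Prop: MainTanSpace}, $\dim_k\operatorname{T}_yM=g+(g-1-2\bar g)=2g-1-2\bar g$ for every $y\in Z_{\underline b'}$; the set $\{y:\dim_k\operatorname{T}_yM\ge 2g-1-2\bar g\}$ is closed and contains $Z_{\underline b'}$, hence contains $x$. Therefore $\dim_k\operatorname{T}_xM\ge 2g-1-2\bar g>g\ge\dim_xM$, the strict inequality holding because $g\ge 4\bar g-2>2\bar g+1$. So $x$ is singular, and combining the three cases gives $M^{\mathrm{sm}}=U$.

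The main obstacle, as indicated, is the strictly semistable case: one must locate each such point inside the closure of a stratum $Z_{\underline b'}$ of stable generalized line bundles with $\dim\operatorname{T}=2g-1-2\bar g>g$ and then invoke semicontinuity, the two enabling facts being that $L_2\otimes L_1^{-1}\otimes\calN^{-1}$ moves in a degree-$\ge2$ linear system of positive dimension (so it has a non-reduced effective representative, making Lemma~\ref{Lemma: HowGLBSpecialize I} applicable) and that $g>2\bar g+1$; both follow from $\bar g\ge2$ and $g\ge4\bar g-2$. One could alternatively compute $\operatorname{T}_xM$ directly from the \'etale-local GIT description of $M$ at the polystable point $i_*(L_1\oplus L_2)$, but the semicontinuity route is shorter and sidesteps reductive-quotient subtleties in positive characteristic.
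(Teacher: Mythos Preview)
Your proof is correct. On the stable locus it coincides with the paper's argument: compare the tangent space dimensions from Proposition~\ref{Prop: MainTanSpace} against the component dimensions from Theorems~\ref{Thm: LineBundleComp} and~\ref{Thm: VectorComponent}.

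The genuine difference is your treatment of the strictly semi-stable locus. The paper dispatches this in two lines: every strictly semi-stable point lies in the image of the closed embedding $\operatorname{M}(2,e)\hookrightarrow\operatorname{M}(\calO_X,P_d)$, and since $\bar g\ge 2$ the stable locus is dense in $\operatorname{M}(2,e)$; hence each such point is a specialization of a stable vector-bundle point, already shown singular, and closedness of the singular locus finishes. Your route instead stays entirely inside the ribbon machinery. You represent the given polystable class $i_*(L_1\oplus L_2)$ by a strictly semi-stable generalized line bundle whose blow-up divisor $D\in|L_2\otimes L_1^{-1}\otimes\calN^{-1}|$ is chosen non-reduced (the existence of such $D$ coming from $h^0\ge 2$, $\deg\ge 3$, and a ramification argument on a pencil, valid since $\bar g\ge 2$), then apply Lemma~\ref{Lemma: HowGLBSpecialize I} to deform it to a \emph{stable} generalized line bundle of index $g-1-2\bar g$, and finally invoke upper semicontinuity of $\dim\operatorname{T}$ together with $g>2\bar g+1$. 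The paper's approach is shorter and leans on the classical density of stable bundles in $\operatorname{M}(2,e)$; yours avoids that external input and, as a side benefit, actually places each strictly semi-stable point inside the closure of a specific stratum $Z_{\underline b'}$ of stable generalized line bundles---a fact the paper does not establish.
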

\begin{proof}
	First, let us prove the weaker claim concerning the smooth locus of the space $\operatorname{M}_{\text{s}}(\calO_{X}, P_d)$ of stable sheaves.  It is enough to show that if $x \in \operatorname{M}_{\text{s}}(\calO_{X}, P_d)$, then the tangent space dimension of $\operatorname{M}(\calO_{X}, P_d)$ equals the local (topological) dimension if and only if $x$ 
corresponds to a line bundle.  Theorems \ref{Thm: LineBundleComp},  \ref{Thm: VectorComponent} together with 
Proposition~\ref{Prop: MainTanSpace} show that equality must fail except possibly in the following cases: when $x$ corresponds to a stable line bundle and when $x$ corresponds to a stable  rank $2$ vector bundle on $X_{\text{red}}$.  We must show that the second case
cannot occur.

If $x$ corresponds to a stable rank $2$ vector bundle on $X_{\text{red}}$, then the tangent space dimension is: 
\begin{displaymath}
	\dim\operatorname{T}_{x} \operatorname{M}_{\text{s}}(\calO_{X}, P_d)  = 4 g + 5 - 8 \bar{g}.
\end{displaymath}
We have not computed the local dimension of $\operatorname{M}(\calO_{X}, P_d)$ at $x$, but 
Theorems~\ref{Thm: LineBundleComp} and \ref{Thm: VectorComponent} show this
 local dimension at $x$ is either $g$ or $4 \bar{g} - 3$. A direct computation shows that both
numbers are strictly smaller than $4 g + 5 - 8 \bar{g}$, proving the claim concerning the locus
of stable sheaves.

What about the strictly semi-stable locus?  Because $\bar{g} \ge 2$, every strictly semi-stable point is the specialization of 
a stable point that is not a line bundle.  Indeed, the strictly semi-stable locus is contained in the image of $\operatorname{M}(2, e)$, and the stable locus is dense in $\operatorname{M}(2,e)$.  Because the singular locus is closed, we can conclude that the strictly semi-stable locus is contained in the 
singular locus, completing the proof.
\end{proof}
Observe that the hypothesis that $g \geq 4 \bar{g}-2$ is used to compute the dimension of the tangent space of $\operatorname{M}(\calO_{X}, P_d)$ at a point
corresponding to a stable rank $2$ vector bundle on $X_{\text{red}}$.  The method of proof can be used to describe the singular locus of 
$\operatorname{M}(\calO_{X}, P_d)$ under weaker hypotheses, but then the conclusion becomes more difficult to state: if $g \le 4 \bar{g} -3$ is allowed, 
then the smooth locus may contain stable rank $2$ vector bundles.  For example, if $g$ is sufficiently negative and $\bar{g} \ge 2$, then there are no semi-stable generalized line bundles, so $\operatorname{M}(\calO_{X}, P_d)$ has
a unique component of dimension $4 \bar{g}-3$ whose smooth locus contains the locus of stable rank $2$ vector bundles on $X_{\text{red}}$.  The
proof also makes use of the fact that every strictly semi-stable sheaf is the specialization of a stable sheaf.  In general, this 
condition may fail to hold (e.g. $\bar{g}=1$ and $g=2$). When this occurs, a more delicate analysis of the semi-stable locus is needed.

\bibliographystyle{amsalpha}
\bibliography{bibl}

\providecommand{\bysame}{\leavevmode\hbox to3em{\hrulefill}\thinspace}
\providecommand{\MR}{\relax\ifhmode\unskip\space\fi MR }
\providecommand{\MRhref}[2]{%
  \href{http://www.ams.org/mathscinet-getitem?mr=#1}{#2}
}
\providecommand{\href}[2]{#2}
\begin{thebibliography}{DEL97}

\bibitem[BE95]{bayer95}
Dave Bayer and David Eisenbud, \emph{Ribbons and their canonical embeddings},
  Trans. Amer. Math. Soc. \textbf{347} (1995), no.~3, 719--756. \MR{1273472
  (95g:14032)}

\bibitem[BLR90]{bosch}
Siegfried Bosch, Werner L{\"u}tkebohmert, and Michel Raynaud, \emph{N\'eron
  models}, Ergebnisse der Mathematik und ihrer Grenzgebiete (3) [Results in
  Mathematics and Related Areas (3)], vol.~21, Springer-Verlag, Berlin, 1990.
  \MR{1045822 (91i:14034)}

\bibitem[Che10]{dawei}
Dawei Chen, \emph{Linear series on ribbons}, Proc. Amer. Math. Soc.
  \textbf{138} (2010), no.~11, 3797--3805. \MR{2679602}

\bibitem[DEL97]{donagi}
Ron Donagi, Lawrence Ein, and Robert Lazarsfeld, \emph{Nilpotent cones and
  sheaves on {$K3$} surfaces}, Birational algebraic geometry ({B}altimore,
  {MD}, 1996), Contemp. Math., vol. 207, Amer. Math. Soc., Providence, RI,
  1997, pp.~51--61. \MR{1462924 (98f:14006)}

\bibitem[Dr{\'e}06]{drezet06}
Jean-Marc Dr{\'e}zet, \emph{Faisceaux coh\'erents sur les courbes multiples},
  Collect. Math. \textbf{57} (2006), no.~2, 121--171. \MR{2223850
  (2007b:14077)}

\bibitem[Dr{\'e}08]{drezet08}
\bysame, \emph{Moduli spaces of coherent sheaves on multiples curves},
  Algebraic cycles, sheaves, shtukas, and moduli, Trends Math., Birkh\"auser,
  Basel, 2008, pp.~33--43. \MR{2402692 (2009e:14015)}

\bibitem[Dr{\'e}09]{drezet09}
\bysame, \emph{Faisceaux sans torsion et faisceaux quasi localement libres sur
  les courbes multiples primitives}, Math. Nachr. \textbf{282} (2009), no.~7,
  919--952. \MR{2541242 (2010j:14027)}

\bibitem[Dr{\'e}11]{drezet11}
\bysame, \emph{Sur les conditions d'existence des faisceaux semi-stables sur
  les courbes multiples primitives}, Pacific J. Math. \textbf{249} (2011),
  no.~2, 291--319.

\bibitem[EG95]{green95}
David Eisenbud and Mark Green, \emph{Clifford indices of ribbons}, Trans. Amer.
  Math. Soc. \textbf{347} (1995), no.~3, 757--765. \MR{1273474 (95g:14033)}

\bibitem[Gro65]{ega43}
Alexander Grothendieck, \emph{\'{E}l\'ements de g\'eom\'etrie alg\'ebrique.
  {IV}. \'{E}tude locale des sch\'emas et des morphismes de sch\'emas. {III}},
  Inst. Hautes \'Etudes Sci. Publ. Math. (1965), no.~24.

\bibitem[Ina04]{inaba}
Michi-Aki Inaba, \emph{On the moduli of stable sheaves on some nonreduced
  projective schemes}, J. Algebraic Geom. \textbf{13} (2004), no.~1, 1--27.
  \MR{2008714 (2004h:14020)}

\bibitem[Lan04]{langer}
Adrian Langer, \emph{Semistable sheaves in positive characteristic}, Ann. of
  Math. (2) \textbf{159} (2004), no.~1, 251--276. \MR{2051393 (2005c:14021)}

\bibitem[Laz04]{rlaz}
Robert Lazarsfeld, \emph{Positivity in algebraic geometry. {II}}, Ergebnisse
  der Mathematik und ihrer Grenzgebiete. 3. Folge. A Series of Modern Surveys
  in Mathematics [Results in Mathematics and Related Areas. 3rd Series. A
  Series of Modern Surveys in Mathematics], vol.~49, Springer-Verlag, Berlin,
  2004, Positivity for vector bundles, and multiplier ideals. \MR{2095472
  (2005k:14001b)}

\bibitem[LP95]{potier}
Joseph Le~Potier, \emph{Fibr\'es vectoriels sur les courbes alg\'ebriques},
  Publications Math\'ematiques de l'Universit\'e Paris 7--Denis Diderot
  [Mathematical Publications of the University of Paris 7--Denis Diderot], 35,
  Universit\'e Paris 7--Denis Diderot U.F.R de Math\'ematiques, Paris, 1995,
  With a chapter by Christoph Sorger. \MR{1370930 (97c:14034)}

\bibitem[Mar96]{maruyama}
Masaki Maruyama, \emph{Construction of moduli spaces of stable sheaves via
  {S}impson's idea}, Moduli of vector bundles ({S}anda, 1994; {K}yoto, 1994),
  Lecture Notes in Pure and Appl. Math., vol. 179, Dekker, New York, 1996,
  pp.~147--187. \MR{1397986 (97h:14020)}

\bibitem[Sim94]{simpson}
Carlos~T. Simpson, \emph{Moduli of representations of the fundamental group of
  a smooth projective variety. {I}}, Inst. Hautes \'Etudes Sci. Publ. Math.
  (1994), no.~79, 47--129. \MR{1307297 (96e:14012)}

\end{thebibliography}
\end{document}